\theoremstyle{definition}
\newtheorem{theorem}{Theorem}[section]
\newtheorem{lemma}[theorem]{Lemma}
\theoremstyle{definition}
\newtheorem{definition}[theorem]{Definition}
\newtheorem{fact}[theorem]{Fact} 
\newtheorem{observation}[theorem]{Observation} 
\newtheorem*{ac}{Acknowledgement}
\theoremstyle{remark}
\newtheorem{remark}[theorem]{Remark}
\newenvironment{rmenum}{
\begin{enumerate}

}
{\end{enumerate}}
\newcommand{\parNei}[2]{N_{#1}(#2)}
\newcommand{\parcut}[2]{\delta_{#1}(#2)}
\renewcommand{\complement}[1]{#1^c}
\newcommand{\nonneg}{\ge 0}
\newcommand{\fcompb}[2]{\mathcal{G}(#1, #2)}
\newcommand{\constb}[2]{\mathcal{G}^+(#1, #2)}
\newcommand{\inconstb}[2]{\mathcal{G}^-(#1, #2)}
\newcommand{\inconstbh}[3]{\mathcal{G}^-_{#3}(#1, #2)}
\newcommand{\parforg}[1]{\preceq_{#1}}
\newcommand{\exth}[2]{\Sigma_{#2}(#1)}
\newcommand{\ecompb}[2]{\mathcal{C}(#1, #2)}
\newcommand{\pardm}[1]{\trianglelefteq_{#1}} 
\newcommand{\aux}[4]{{Aux}(#1; #2, #3; #4)}
\newcommand{\onevec}{1}
\newcommand{\paroforg}[1]{\preceq_{#1}^{\circ}}
\newcommand{\defib}[2]{D(#1, #2)}
\title[$b$-Matching DM Decomposition]{The Dulmage-Mendelsohn Decomposition for $b$-Matchings}
\author{Nanao Kita}
\address{National Institute of Informatics
2-1-2 Hitotsubashi, Chiyoda-ku, Tokyo, Japan 101-8430}
\email{kita@nii.ac.jp}
\date{\today}
\begin{document}
\maketitle

\begin{abstract}
We establish the theory of the Dulmage-Mendelsohn decomposition for $b$-matchings. 
The original Dulmage-Mendelsohn decomposition is a classical canonical decomposition of bipartite graphs, which describes the structures of the maximum $1$-matchings and the dual optimizers, i.e., the minimum vertex covers.  
In this paper, we develop  analogical properties,    
and thus obtain the structure of the maximum $b$-matchings and  characterizes   the family of $b$-verifying set. 
\end{abstract} 

\section{Preliminaries} 
\subsection{Notation} 
\subsubsection{General Statements} 
For standard notation of sets, graphs, and algorithms, 
we mostly refer to Schrijver~\cite{schrijver2003}. 
In the following, we list exceptions and non-standard definitions. 
Given a graph or a digraph $G$, the vertex set is denoted by $V(G)$; 
the edge set is denoted by $E(G)$ if $G$ is an (undirected) graph; otherwise, $A(G)$ denotes the arc set. 
An edge with ends $u$ and $v$ is denoted by $uv$. 
Similarly, an arc with tail $u$ and head $v$ is denoted by $uv$. 
As usual, a singleton $\{x\}$ is often denoted simply by $x$. 
We sometimes denote the vertex set of a graph $G$ simply by $G$ itself. 
For $X\subseteq V(G)$, $\complement{X}$ denotes $V(G)\setminus X$.

\subsubsection{Operations on Graphs} 
Given a graph $G$ and a set of vertices $X\subseteq V(G)$, 
the subgraph of $G$ induced by $X$ is denoted by $G[X]$. 
Given a set of edges $F$ from a supergraph of $G$,  
$G + F$ denotes the graph obtained by adding $F$ to $G$. 
Given subgraphs $H_1$ and $H_2$ of $G$, $H_1 + H_2$ denotes the union of $H_1$ and $H_2$. 

\subsubsection{Functions on Graphs} 
Given a set of vertices $X$ in a graph $G$, 
the set of neighbors of $X$ is denoted by $\parNei{G}{X}$. 
That is to say, $\parNei{G}{X}$ is the set of vertices that are adjacent to a vertex in $X$ 
and themselves are not in $X$. 
Given $X,Y\subseteq V(G)$, the set of edges whose one end is in $X$ and the other is in $Y$ is 
denoted by $E_G[X, Y]$. 
The set $E_G[X, V(G)\setminus X]$ is denoted by $\parcut{G}{X}$. 
The set of edges both of whose ends are in $X$ is denoted by $E_G[X]$. 
With respect to these functions, we often omit the subscript ``$G$'' if  it is clear from the contexts.

\subsubsection{Paths and Circuits} 
We treat paths and circuits as graphs. 
A circuit is a connected graph such that  every vertex has the degree two. 
A path is a connected graph such that  every vertex has the degree two or less 
and it is not a circuit. 
Given a path $P$ and vertices $x,y\in V(P)$, 
$xPy$ denotes the subpath between $x$ and $y$, namely, the subgraph of $P$ that is a path with ends $x$ and $y$.

\subsubsection{Ideals} 
Let $\mathcal{P}$ be a poset over a set $X$. 
Then, it is easy to observe that, for any lower ideal $I \subseteq X$, 
$X\setminus I$ is an upper ideal of $\mathcal{P}$; 
for any upper ideal $J \subseteq X$, 
$X\setminus J$ is a lower ideal of $\mathcal{P}$. 
We say a pair of an upper ideal $I$ and a lower ideal $J$ is {\em complementary} 
if $I\dot\cup J = X$. 

\subsubsection{Projective Union}
Let $G$ be a graph,  let $\mathcal{I}$ be a set of subgraphs of $G$, 
and let $W\subseteq V(G)$. 
A {\em projective union} of $\mathcal{I}$ over $W$ 
is the set $\bigcup_{H\in\mathcal{I}} V(I)\cap W$. 

\subsection{$b$-Matchings} 
Let $G$ be a graph, and let $b: V(G)\rightarrow \mathbb{Z}_{\nonneg}$. 
A set of edges $M\subseteq V(G)$ is a {\em $b$-matching} if 
$|\parcut{G}{v}\cap M| \le b(v)$ holds for each $v\in V(G)$. 
We denote $b$ by $\onevec$ if $b(v) = 1$ for each $v\in V(G)$. 
A $1$-matching is often called simply a {\em matching}.  
Given a $b$-matching $M$, 
a vetex $v\in V(G)$ is {\em $M$-loose} if $|\parcut{G}{v} \cap M | < b(v)$ holds; 
$v$ is {\em $M$-tight} if $|\parcut{G}{v} \cap M | = b(v)$. 
A {\em maximum} $b$-matching  is a $b$-matching of $G$ with the greatest number of edges. 
A $b$-matching $M$ is {\em perfect} if every vertex is $M$-tight.   

An edge is {\em $b$-allowed} if it is contained in a maximum $b$-matching; 
otherwise, it is {\em $b$-forbidden}.  
A $b$-allowed edge $e\in V(G)$ is {\em $b$-inevitable} if any maximum $b$-matching contains $e$; 
otherwise, $e$ is {\em $b$-flexible}.

A {\em $b$-elementary component} of $G$ is a subgraph $G[V(C)]$, 
where $C$ is a connected component of the subgraph of $G$ determined by 
the set of $b$-allowed edges.  
That is to say, 
a graph comprises of $b$-elementary components 
and $b$-forbidden edges that join distinct $b$-elementary components.  
The set of $b$-elementary components of $G$ is denoted by $\ecompb{G}{b}$. 

A {\em $b$-flexible component} is  a subgraph $G[V(C)]$, 
where $C$ is a connected component of the subgraph of $G$ determined by 
the set of $b$-flexible edges.  
That is to say, a graph comprises of $b$-flexible components, and $b$-inevitable and $b$-forbidden edges that join distinct $b$-flexible components. 
The set of $b$-flexible components of $G$ is denoted by $\fcompb{G}{b}$. 

As such, $b$-elementary components and $b$-flexible components can be viewed,  in their respective ways,  as fundamental building blocks of  a graph in the context of  understanding the structure of maximum $b$-matchings. Note that both concepts of $b$-elementary components and $b$-flexible components are canonical by  definition.  
Given a subgraph $C$ of $G$, define $b|_C: V(C)\rightarrow \mathbb{Z}_{\nonneg}$ as such 
that $b|_C(v) := b(v)-k_v$ for each $v\in V(C)$, 
where $k_v$ is the number of $b$-inevitable edges in $E_G[v, V(G)\setminus V(C)]$. 
It is easy to observe that 
a set of edges is a maximum $b$-matching if and only if it is the disjoint union of maximum $b|_C$-matchings of every $b$-elementary component $C$. 
Also, a set of edges is a maximum $b$-matching if and only if it is the disjoint union of the set of $b$-inevitable edges, which join distinct $b$-flexible components, and  maximum $b|_C$-matchings of every $b$-flexible component $C$; for, we will later prove the following fact: 
\begin{fact} \label{fact:allflex} 
Given a bipartite graph $G$ and a mapping $b: V(G)\rightarrow \mathbb{Z}_{\nonneg}$, 
any edge in a $b$-flexible component is $b$-flexible. 
\end{fact}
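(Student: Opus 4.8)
The plan is to fix a single maximum $b$-matching and reduce the whole statement to a strong-connectivity criterion in a residual digraph. Let $A$ and $B$ be the two colour classes of the bipartite graph $G$. I would model $b$-matchings as integral $s$–$t$ flows in the digraph obtained by adding a source $s$ with arcs $s\to a$ of capacity $b(a)$ for $a\in A$, a sink $t$ with arcs $b'\to t$ of capacity $b(b')$ for $b'\in B$, and orienting each edge $ab'\in E(G)$ (with $a\in A$, $b'\in B$) as a capacity-$1$ arc $a\to b'$. Under this correspondence maximum $b$-matchings are exactly maximum integral flows, an edge is $b$-allowed precisely when some maximum flow saturates its arc, and $b$-inevitable precisely when every maximum flow does. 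I write $f$ for the maximum flow representing $M$ and work throughout in its residual digraph $R$.

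The key step, which I would isolate as a lemma, is the characterization that an edge $e=ab'$ is $b$-flexible if and only if $a$ and $b'$ lie in the same strongly connected component of $R$. For the forward direction, flexibility supplies a second maximum flow $f'$ with $f'(e)\neq f(e)$; then $f'-f$ is a feasible circulation in $R$, so it decomposes into directed cycles in $R$, and since $(f'-f)(e)\neq 0$ one such cycle traverses the residual arc of $e$ (the backward arc $b'\to a$ when $e\in M$, the forward arc $a\to b'$ when $e\notin M$). As $a$ and $b'$ both lie on this directed cycle, they share a strongly connected component. For the converse, if $a$ and $b'$ share a component then a simple directed path in $R$ realizing the reverse reachability, together with the residual arc of $e$, closes a directed cycle through that arc; pushing one unit of flow around it yields a new maximum flow that flips the saturation of $e$, so $e$ is $b$-allowed yet not $b$-inevitable, i.e.\ $b$-flexible. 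The step I expect to be the main obstacle is exactly this equivalence between ``a second maximum flow differs on $e$'' and ``$e$ lies on a residual directed cycle'': it is where bipartiteness is genuinely used, since bipartiteness is what lets maximum $b$-matchings be identified with maximum flows and hence lets the residual-circulation decomposition apply. A secondary care point is to take the path furnished by the common component to be simple, so that the closing cycle really traverses the residual arc of $e$ and the rerouting actually changes the saturation of $e$.

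Granting the lemma, the Fact follows quickly. Every $b$-flexible edge joins two vertices lying in a common strongly connected component of $R$, so consecutive vertices along any path of $b$-flexible edges lie in one component; since the strongly connected components partition $V(G)$, the entire vertex set $V(C)$ of a connected component $C$ of the $b$-flexible subgraph is contained in a single component $S$ of $R$. Now let $uv$ be any edge of $G$ with $u,v\in V(C)$. Then $u,v\in S$, so $u$ and $v$ share a strongly connected component of $R$, and the lemma gives that $uv$ is $b$-flexible. Since every edge of the $b$-flexible component $G[V(C)]$ has both ends in $V(C)$, this shows that all of its edges are $b$-flexible, as claimed.
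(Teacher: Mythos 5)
Your proof is correct, but it takes a genuinely different route from the paper's. The paper argues directly with $M$-alternating paths in $G$ and splits into cases according to whether the flexible component is consistent or inconsistent: for a consistent component it invokes Lemma~\ref{lem:elempath} (via Lemma~\ref{lem:restrict}) to produce an $M$-saturated or $M$-exposed path closing $uv$ into an $M$-alternating circuit, and for an inconsistent component it first uses Lemma~\ref{lem:path2veri} to choose $M$ with $u$ loose and then Lemma~\ref{lem:inconst2path} to produce either an alternating circuit or an $M$-wedge path ending at a loose vertex; in both cases the symmetric difference yields a second maximum $b$-matching disagreeing on $uv$. You instead pass to the $s$--$t$ flow model, prove the single clean characterization that an edge is $b$-flexible if and only if its ends lie in one strongly connected component of the residual digraph of a fixed maximum flow (forward direction by flow decomposition of $f'-f$ into residual cycles, backward direction by augmenting around a residual cycle), and then propagate ``same SCC'' along flexible edges through the component. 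What your approach buys is uniformity: the source and sink arcs absorb the loose-vertex bookkeeping, so the consistent/inconsistent case split disappears, at the price of importing standard flow-decomposition machinery rather than staying self-contained in the paper's alternating-path vocabulary. It is worth noting that your residual digraph restricted to $V(G)$ is exactly the paper's auxiliary digraph $\aux{G}{A}{B}{M}$ from Section~\ref{sec:alg}, and your SCC criterion is essentially Lemma~\ref{lem:constalg}; so your argument front-loads the algorithmic viewpoint that the paper only develops after the structural theory. The only points you should make explicit in a final write-up are that the difference $f'-f$ vanishes at $s$ and $t$ because the two flows have equal value (so it really is a circulation), and that the component $S$ you land in may also contain $s$, $t$, or other vertices, which is harmless since you only use $V(C)\subseteq S$.
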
 
Note that $b$-flexible components give a refinement of $b$-elementary components; 
that is, for each $C\in \ecompb{G}{b}$, there exist $D_1,\ldots, D_k\in\fcompb{G}{b}$, where $k\ge 1$, 
such that $V(C) = V(D_1)\dot\cup \cdots \dot\cup V(D_k)$. 

Let $C$ be a subgraph of $G$, which will be typically a $b$-flexible or $b$-elementary component. 
We say that $C$ is {\em trivial} if it has only one vertex. 
A vertex $v\in V(G)$ is {\em $b$-inactive} if $b(v) = 0$. 
If $v$ is an inactive vertex, then $G[v]$ forms a trivial $b$-flexible component 
as well as a trivial $b$-elementary component, 
which we say that is  {\em  $b$-inactive}.   
\begin{definition} 
Given $G$ and $b$, 
$\defib{G}{b}$ denotes the set of vertices that are $M$-loose for some maximum $b$-matchings. 
\end{definition} 
We say that $C$ of $G$ is {\em $b$-tight} (resp. {\em $b$-loose})    
if it has no vertices (resp. some vertices) in $\defib{G}{b}$. 
We say that $C$ is {\em $b$-inconsistent} 
if it is $b$-loose or if it is  $b$-inactive 
and its vertex is a neighbor of $\defib{G}{b}$; 
otherwise, $C$ is {\em $b$-consistent}. 
\begin{observation} 
Let $C\in \fcompb{G}{b}$ or $C\in \ecompb{G}{b}$, 
and let $M$ be a maximum $b$-matching of $G$. 
Then, $M\cap E(C)$ is a perfect $b|_C$-matching of $C$ if and only if $C$ is $b$-tight. 
\end{observation}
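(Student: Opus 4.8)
The plan is to handle the two cases ($C$ a $b$-elementary component and $C$ a $b$-flexible component) together, by first reducing the perfectness of $M\cap E(C)$ to the tightness of the individual vertices of $C$, and then observing that the total deficiency of $C$ is the same for every maximum $b$-matching. The first step is to read off, from the two decomposition statements preceding the observation, that $M\cap E(C)$ is always a maximum $b|_C$-matching of $C$: in the elementary case the edges of $\parcut{G}{V(C)}$ are $b$-forbidden and hence missed by $M$, with $b|_C=b$ on $V(C)$; in the flexible case the flexible decomposition (which rests on Fact~\ref{fact:allflex}) shows that the edges of $M$ leaving $C$ are precisely $b$-inevitable, and those incident with a vertex $v$ number exactly the $k_v$ occurring in the definition of $b|_C$. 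In both cases one obtains, for every $v\in V(C)$,
$$|\parcut{G}{v}\cap M| = |\parcut{C}{v}\cap(M\cap E(C))| + k_v,$$
so that $v$ is $M$-tight for $b$ in $G$ if and only if $v$ is $(M\cap E(C))$-tight for $b|_C$ in $C$; hence $M\cap E(C)$ is a perfect $b|_C$-matching precisely when every vertex of $C$ is $M$-tight.

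The crux of the argument is then the invariance of the total deficiency
$$d_C := \sum_{v\in V(C)}\bigl(b|_C(v) - |\parcut{C}{v}\cap(M\cap E(C))|\bigr)$$
under the choice of maximum $b$-matching $M$. Since every edge of $M\cap E(C)$ has both ends in $V(C)$, we have $\sum_{v\in V(C)}|\parcut{C}{v}\cap(M\cap E(C))| = 2|M\cap E(C)|$, and $|M\cap E(C)|$ equals the common size of a maximum $b|_C$-matching of $C$; therefore $d_C=\sum_{v\in V(C)}b|_C(v)-2|M\cap E(C)|$ takes the same value for every maximum $b$-matching. As each summand of $d_C$ is nonnegative (because $M\cap E(C)$ is a $b|_C$-matching), $M\cap E(C)$ is perfect if and only if $d_C=0$, and by invariance the vanishing of $d_C$ is independent of $M$.

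It then remains to match $d_C=0$ with $b$-tightness via the definition of $\defib{G}{b}$. If $C$ is $b$-tight, then $V(C)\cap\defib{G}{b}=\emptyset$, so no vertex of $C$ is loose for any maximum $b$-matching; in particular every vertex of $C$ is $M$-tight and $M\cap E(C)$ is perfect. Conversely, if $M\cap E(C)$ is perfect then $d_C=0$, whence by invariance $d_C=0$ for every maximum $b$-matching $M'$; the nonnegativity of the summands forces every vertex of $C$ to be $M'$-tight for every such $M'$, so $V(C)\cap\defib{G}{b}=\emptyset$ and $C$ is $b$-tight. I expect the genuinely delicate point to be the bookkeeping of the first paragraph — confirming that the edges of $M$ incident with $V(C)$ yet outside $E(C)$ are exactly the $b$-inevitable ones counted by the $k_v$, which is where the decomposition statements and Fact~\ref{fact:allflex} do the real work; once this tightness correspondence is secured, the invariance of $d_C$ turns a statement about the single matching $M$ into the matching-independent condition of $b$-tightness, and the equivalence follows at once.
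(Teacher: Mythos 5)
Your proof is correct and follows exactly the route the paper intends: the observation is stated there without proof, immediately after the decomposition statements and the definition of $b|_C$ on which your argument rests, and your tightness correspondence plus the invariance of the total deficiency is precisely how that placement is meant to be cashed in. One minor remark: your parenthetical appeal to Fact~\ref{fact:allflex} is unnecessary (and better omitted, since that fact is only proved much later and its proof chain passes through consequences of this observation) --- the identity $|\parcut{G}{v}\cap M| = |\parcut{C}{v}\cap(M\cap E(C))| + k_v$ needs only that every edge joining distinct flexible components is non-flexible, which is definitional, together with the fact that a maximum $b$-matching contains every inevitable edge and no forbidden edge.
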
 

Let $W\subseteq V(G)$, which will typically be a color class of a bipartite graph. 
 A $b$-loose subgraph $C$ is {\em hooked up by } $W$ if $V(C)\cap \defib{G}{b}\cap W \neq \emptyset$. 
A $b$-inconsistent subgraph $D$ is {\em hooked up by} $W$ if it is $b$-loose and is hooked up by $W$  or if it is  $b$-inactive and its sole vertex is 
a neighbor of $\defib{G}{b}\cap W$. 

The sets of $b$-consistent and $b$-inconsistent $b$-flexible components are denoted by 
$\constb{G}{b}$ and $\inconstb{G}{b}$, respectively. The sets of $b$-inconsistent $b$-flexible components hooked up by $W$ is denoted by $\inconstbh{G}{b}{W}$.

Regarding the definitions presented in this section, we will often omit the modifier ``$b$-'' if no confusion may arise. So will we for the definitions that will appear in later sections, such as $b$-verifying sets.

\section{Dulmage-Mendelsohn Decomposition for $1$-Matchings} \label{sec:dm}

In this section, 
we present the original Dulmage-Mendelsohn decomposition~\cite{dm1958, dm1959, dm1963, lovasz2009matching, murota2009matrices}, which is for $1$-matchings.  
Throughout this section, 
let $G$ be a bipartite graph with color classes $A$ and $B$.

\begin{definition} 
A set of vertices $S$ in a graph is a {\em vertex cover} if every edge has a vertex in $S$. 
\end{definition}

The maximum $1$-matching problem forms a min-max  theorem as follows: 
\begin{theorem} \label{thm:vcminmax} 
In a bipartite graph, the number of edges in a maximum $\onevec$-matchings is equal to the number of vertices in a minimum vertex cover. 
\end{theorem}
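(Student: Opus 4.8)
The plan is to prove this classical min-max theorem (König's theorem) by establishing both inequalities. Let $\nu$ denote the maximum size of a $\onevec$-matching and $\tau$ the minimum size of a vertex cover. The easy direction, $\nu \le \tau$, follows from a simple counting argument: given any $\onevec$-matching $M$ and any vertex cover $S$, every edge of $M$ must be covered by some vertex of $S$, and since $M$ is a matching no vertex of $S$ can cover two distinct edges of $M$. Hence $|M| \le |S|$, and taking $M$ maximum and $S$ minimum yields $\nu \le \tau$. This argument uses only the bipartiteness implicitly (in fact it holds for all graphs) and no structural input.

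The substantial direction is $\tau \le \nu$, for which I would exhibit a vertex cover of size exactly $\nu$. First I would fix a maximum matching $M$ and consider alternating paths. Let $U \subseteq A$ be the set of $M$-unmatched vertices in the color class $A$. Define $Z$ to be the set of all vertices reachable from $U$ by $M$-alternating paths (starting with a non-matching edge out of $U$). I then set
\[
S := (A \setminus Z) \cup (B \cap Z).
\]
The claim is that $S$ is a vertex cover with $|S| = |M|$.

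To verify that $S$ is a vertex cover, I would argue that no edge $uv$ with $u \in A$, $v \in B$ escapes $S$: if $u \in Z \cap A$ and $v \notin Z \cap B$, one checks that the alternating structure forces $uv \in M$ and $v$ to be reachable, a contradiction; the other cases are routine. To verify $|S| = |M|$, the key observations are that every vertex of $A \setminus Z$ is matched (since all unmatched $A$-vertices lie in $U \subseteq Z$), that every vertex of $B \cap Z$ is matched (an unmatched vertex in $B \cap Z$ would yield an $M$-augmenting path, contradicting maximality of $M$ via Berge's theorem), and that the matching edge incident to a vertex of $B \cap Z$ has its other end outside $Z$, so the two parts of $S$ receive disjoint sets of matching edges under the bijection $v \mapsto$ (the $M$-edge at $v$). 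This gives $|S| = |M| = \nu$, and combined with the first direction, $\nu = \tau$.

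The main obstacle is the careful case analysis showing $S$ covers every edge, since this is where bipartiteness and the precise definition of reachability via alternating paths interact; in particular one must rule out edges joining $Z \cap A$ to $(B \setminus Z)$, which is exactly the place where an overlooked alternating path would break the argument. The counting identity $|S| = |M|$ is then a clean bijective bookkeeping step once the reachability set $Z$ is correctly understood, relying on the absence of $M$-augmenting paths for a maximum matching.
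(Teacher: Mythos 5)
Your overall strategy is the standard alternating-path proof of K\H{o}nig's theorem and is sound; note that the paper itself states Theorem~\ref{thm:vcminmax} as classical background with no proof, so there is no in-paper argument to compare against line by line. The closest analogue in the paper is the self-contained machinery of Lemmas~\ref{lem:min2max} and~\ref{lem:path2veri}: your reachable set $Z$ is exactly the $b=\onevec$ specialization of the sets $S_A$ (vertices reached by $M$-wedge paths) and $T_A$ (vertices reached by $M$-exposed paths), and your cover $S=(A\setminus Z)\cup(B\cap Z)$ is the complement of the paper's canonical verifying set $S_A\cup(B\setminus T_A)$. So your route is essentially the specialization of the paper's general $b$-matching argument, not a genuinely different one.

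There is one concrete error in your bookkeeping step. You assert that the matching edge incident to a vertex $v\in B\cap Z$ has its other end \emph{outside} $Z$. That is backwards, and as stated it would destroy your count: if that edge landed in $A\setminus Z$, it would have both ends in $S$ and be charged twice, so the map $v\mapsto(\text{the $M$-edge at }v)$ would fail to separate the two parts of $S$. The correct fact is that the $M$-edge at $v\in B\cap Z$ has its other end \emph{inside} $Z$, i.e., in $A\cap Z$: the alternating path from $U$ reaching $v$ arrives along a non-matching edge (by parity, since $U\subseteq A$ and paths start with non-matching edges), so it extends along the unique $M$-edge at $v$, placing that endpoint in $Z$. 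Hence no $M$-edge meets $A\setminus Z$ and $B\cap Z$ simultaneously, the assigned matching edges are distinct, and $|S|\le|M|$ follows. With that one direction corrected, the rest of your argument (the coverage case analysis and the absence of augmenting paths guaranteeing every vertex of $B\cap Z$ is matched) goes through.
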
 

A set of vertices is a vertex cover if and only if its complement is a stable set. 
Therefore, Theorem~\ref{thm:vcminmax} is equivalent to the following: 
\begin{theorem}\label{thm:stminmax}
In a bipartite graph, the number of edges in a maximum $\onevec$-matchings is equal to 
the value $|\complement{Z}|$, 
where $Z\subseteq V(G)$ is a maximum stable  set. 
\end{theorem}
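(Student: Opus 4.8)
The plan is to derive Theorem~\ref{thm:stminmax} directly from Theorem~\ref{thm:vcminmax}, which is already available to us. The bridge is the elementary observation, stated just before the theorem, that a set $S \subseteq V(G)$ is a vertex cover if and only if $\complement{S} = V(G) \setminus S$ is a stable set. This is itself immediate from the definitions: $S$ covers every edge exactly when no edge has both ends outside $S$, which is precisely the condition that $\complement{S}$ induces no edge, i.e.\ that $\complement{S}$ is stable.

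First I would set up the correspondence $S \mapsto \complement{S}$ as a bijection on subsets of $V(G)$ that is inclusion-reversing, so that $|S| + |\complement{S}| = |V(G)|$ for every $S$. Under this bijection, the equivalence above shows that the vertex covers are sent exactly to the stable sets and vice versa. Consequently, minimizing $|S|$ over all vertex covers $S$ is the same problem as maximizing $|\complement{S}| = |V(G)| - |S|$ over all stable sets $\complement{S}$: a minimum vertex cover $S^\ast$ corresponds to a maximum stable set $Z = \complement{S^\ast}$, and conversely. This gives the identity $|S^\ast| = |V(G)| - |Z| = |\complement{Z}|$ relating the two optima.

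The final step is to combine this with Theorem~\ref{thm:vcminmax}. That theorem asserts that the maximum number of edges in a $\onevec$-matching equals $|S^\ast|$, the size of a minimum vertex cover. Substituting the identity $|S^\ast| = |\complement{Z}|$ obtained above yields that this same maximum equals $|\complement{Z}|$ for a maximum stable set $Z$, which is exactly the statement of Theorem~\ref{thm:stminmax}.

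I do not anticipate a genuine obstacle here, since the result is a reformulation rather than a new min-max assertion; the only point requiring care is to state the complementation bijection cleanly and to verify the inclusion-reversing, size-complementary properties so that ``minimum vertex cover'' translates to ``maximum stable set'' without an off-by-one slip in the cardinality bookkeeping. Everything else is a direct substitution into the already-established Theorem~\ref{thm:vcminmax}.
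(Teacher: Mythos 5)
Your proposal is correct and follows exactly the route the paper itself takes: the paper derives Theorem~\ref{thm:stminmax} from Theorem~\ref{thm:vcminmax} via the observation that a set is a vertex cover if and only if its complement is stable, so that a minimum vertex cover is the complement of a maximum stable set. Your write-up merely spells out the inclusion-reversing complementation bijection and the cardinality bookkeeping that the paper leaves implicit.
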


\begin{definition} 
Let $W\in \{A, B\}$. 
Define a binary relation $\pardm{W}$ over $\ecompb{G}{\onevec}$ as follows: 
for $C_1, C_2\in \ecompb{G}{\onevec}$, $C_1\pardm{W} C_2$ holds  
if there exist $D_1,\ldots, D_k\in\ecompb{G}{\onevec}$, where $k\ge 1$,  such that 
$C_1 = D_1$, $C_2 = D_k$, and, $E[D_{i+1}\cap W, D_{i}\cap \complement{W}]\neq\emptyset$  
for each $i\in\{1,\ldots, k-1\}$. 
\end{definition}

\begin{theorem} \label{thm:dm}  
Let $G$ be a bipartite graph with color classes $A$ and $B$, 
and let $W\in \{A, B\}$.  
Then, $\pardm{W}$ is a partial order over $\ecompb{G}{\onevec}$.  
\end{theorem}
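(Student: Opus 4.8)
The plan is to verify the three order axioms, the first two being immediate and \emph{antisymmetry} carrying all the content. For reflexivity, take $k=1$ and $D_1=C$ in the definition; the condition on the arcs is then vacuous, so $C\pardm{W}C$. For transitivity, suppose $C_1\pardm{W}C_2$ is witnessed by a chain $D_1=C_1,\dots,D_k=C_2$ and $C_2\pardm{W}C_3$ by $D'_1=C_2,\dots,D'_m=C_3$; since $D_k=D'_1=C_2$, the concatenation $D_1,\dots,D_k,D'_2,\dots,D'_m$ witnesses $C_1\pardm{W}C_3$. It is convenient to reformulate all of this through the \emph{component digraph} $H$ on vertex set $\ecompb{G}{\onevec}$ having an arc $C\to C'$ exactly when $E[C'\cap W, C\cap\complement{W}]\neq\emptyset$: then $C_1\pardm{W}C_2$ means precisely that there is a directed walk from $C_1$ to $C_2$ in $H$. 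Hence antisymmetry amounts to showing that $H$ has no directed cycle through two or more distinct components.

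Suppose for contradiction that $C_1\to C_2\to\cdots\to C_\ell\to C_1$ is such a cycle, with $\ell\ge 2$ and the $C_i$ pairwise distinct (indices mod $\ell$). Each arc $C_i\to C_{i+1}$ is witnessed by an edge $e_i=u_iv_i$ with $u_i\in C_i\cap\complement{W}$ and $v_i\in C_{i+1}\cap W$; since $e_i$ joins distinct elementary components it is forbidden, and in particular $e_i\notin M$ for every maximum matching $M$. Fix a maximum matching $M$ and orient $G$ by sending each matching edge from its $W$-end to its $\complement{W}$-end and each non-matching edge from its $\complement{W}$-end to its $W$-end; call the result $\vec{G}_M$. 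With this choice each $e_i$ points in the direction $C_i\to C_{i+1}$, and a directed cycle of $\vec{G}_M$ is exactly an $M$-alternating cycle. The key observation is that no forbidden edge can lie on a directed cycle of $\vec{G}_M$: if a non-matching edge $e$ lay on an $M$-alternating cycle $Z$, then $M\bigtriangleup E(Z)$ would be a maximum matching containing $e$, making $e$ allowed.

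The heart of the argument is then to convert the component cycle into a directed closed walk of $\vec{G}_M$ using the edges $e_i$. For this I would route through each $C_i$ by a directed path of $\vec{G}_M$ from the entry vertex $v_{i-1}\in C_i\cap W$ (the head of $e_{i-1}$) to the exit vertex $u_i\in C_i\cap\complement{W}$ (the tail of $e_i$), staying inside $C_i$; concatenating these intra-component paths with the crossing arcs $e_i$ yields a directed closed walk. Decomposing this walk into simple directed cycles, and using that the $e_i$ are the only inter-component arcs while the walk visits at least two components, at least one of these simple cycles traverses some $e_j$; that cycle is a directed cycle of $\vec{G}_M$ through the forbidden edge $e_j$, contradicting the key observation. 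When $C_i$ is \emph{tight}, the required path exists: since no allowed edge leaves an elementary component, $M$ saturates $C_i$ within $C_i$ and, every edge of $C_i$ lying in a perfect matching, $C_i$ is matching-covered; the orientation of a connected matching-covered bipartite graph is strongly connected, which is classical.

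I expect the main obstacle to be precisely this routing step for \emph{loose} components: there $M$ need not saturate $C_i$, the resulting ``internally exposed'' vertices destroy the strong connectivity of $\vec{G}_M[C_i]$, and the path from $v_{i-1}$ to $u_i$ may fail to exist for the globally fixed $M$. The way I would remove this obstacle is to prove that every loose component is \emph{extremal} in $H$, namely that it is either a source or a sink---it has only outgoing crossing arcs when it is hooked up by $W$, and only incoming crossing arcs when it is hooked up by $\complement{W}$. Granting this, no loose component can be an interior vertex of a directed cycle, so every component on the assumed cycle is tight and the routing argument above applies verbatim. Proving the extremality of loose components---essentially the statement that the deficiency of a loose component lies on a single color class and that forbidden crossing edges are thereby forced to point one way---is where the coarse Dulmage--Mendelsohn structure (alternating reachability from exposed vertices) must be invoked, and I regard it as the crux of the whole proof.
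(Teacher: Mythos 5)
The paper states Theorem~\ref{thm:dm} as a classical result and supplies no proof of it, so the fair comparison is with the paper's proof of its $b$-matching analogue (Theorem~\ref{thm:order} together with Lemma~\ref{lem:inconstorder} and Theorem~\ref{thm:extorder}); your plan is essentially that argument specialized to $b=\onevec$. Both reduce everything to antisymmetry; both dispose of the loose components by showing they are extremal, so that a hypothetical cycle in the relation lives entirely among tight components; and both obtain the contradiction by exhibiting an $M$-alternating circuit through a forbidden crossing edge, since switching along it would yield a maximum matching containing that edge. Your orientation $\vec{G}_M$ is exactly the paper's auxiliary digraph $\aux{G}{A}{B}{M}$ from Section~\ref{sec:alg}, and your routing step inside a tight component is the content of Lemma~\ref{lem:elempath} (strong connectivity of the orientation of a connected, perfectly matched piece). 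The only real difference is mechanical: you assemble the alternating circuit by taking a closed walk and decomposing it into simple directed cycles, one of which must carry a crossing edge, whereas the paper concatenates alternating paths directly by induction along the chain (Lemma~\ref{lem:order2path}) and closes up at the first repeated component. Both mechanisms are sound; yours needs the (easy) observation that a closed walk visiting two distinct components contains a simple cycle doing the same, loops in $H$ notwithstanding.

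The one substantive item you leave unexecuted is the one you yourself flag as the crux: that a loose component is a source or a sink of $H$, i.e., that its deficiency vertices lie in a single color class and that every crossing edge at it is thereby forced to point one way. This is precisely the content of the two unnumbered Facts in Section~\ref{sec:dm}, and in the $b$-matching theory it is what Lemmas~\ref{lem:path2veri}, \ref{lem:lcomp2lcomp} and \ref{lem:inconst2const} supply. Your indicated route (alternating reachability from exposed vertices: deficiency in both classes of one component would give an augmenting path, and a wrongly directed crossing edge would extend a wedge path and make a forbidden edge allowed) is the correct one, but as written it is a pointer rather than a proof; the plan is complete only modulo this lemma.
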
 

Note that $\pardm{A}$ and $\pardm{B}$ are symmetric, that is, for any $C_1, C_2\in \ecompb{G}{b}$, $C_1 \pardm{A} C_2$ holds if and only if $C_2\pardm{B} C_2$ holds.

\begin{definition} 
A lower ideal (resp. an upper ideal) $\mathcal{I}$ of the poset $(\ecompb{G}{\onevec}, \pardm{W})$ is {\em normalized} if any inconsistent $\onevec$-elementary component hooked up by $W$ (resp. by $\complement{W}$) is in $\mathcal{I}$ 
and any inconsistent $\onevec$-elementary component hooked up by $\complement{W}$ (resp. by $W$) is disjoint from $\mathcal{I}$.  
\end{definition} 

\begin{fact}
If an inconsistent $\onevec$-elementary component is hooked up by $A$, then it is not hooked up by $B$, and vice versa. 
\end{fact}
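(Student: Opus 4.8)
The plan is to reduce the statement to a purely structural claim about exposable vertices and then settle it through the alternating-path reachability structure of a single fixed maximum matching. First I would observe that for $b=\onevec$ no vertex is inactive, since $b(v)=1\neq 0$; hence a $\onevec$-inconsistent $\onevec$-elementary component is exactly a $\onevec$-loose one, and ``hooked up by $W$'' merely asserts $V(C)\cap\defib{G}{\onevec}\cap W\neq\emptyset$. Since for $b=\onevec$ a vertex is $M$-loose precisely when it is missed by $M$, i.e. $|\parcut{G}{v}\cap M|=0$, the fact to be proved is equivalent to the assertion that for every $C\in\ecompb{G}{\onevec}$ the set $\defib{G}{\onevec}\cap V(C)$ of exposable vertices lies in a single color class. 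I would also note that every edge of $C=G[V(C)]$ is $\onevec$-allowed, because every $\onevec$-forbidden edge joins two distinct $\onevec$-elementary components.

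Fix a maximum matching $M$ and let $X_A$ (resp. $X_B$) be the set of vertices reachable from an $M$-exposed vertex of $A$ (resp. of $B$) along an $M$-alternating path. I would then establish three facts. (1) $X_A\cap X_B=\emptyset$: a common vertex would yield, by the standard argument underlying the well-definedness of the coarse K\"onig/Dulmage--Mendelsohn decomposition, an $M$-alternating, hence $M$-augmenting, path between an exposed vertex of $A$ and an exposed vertex of $B$, contradicting the maximality of $M$. (2) $\defib{G}{\onevec}\cap A\subseteq X_A$ and $\defib{G}{\onevec}\cap B\subseteq X_B$: if $v\in A$ is missed by some maximum matching $M'$, then in $M\triangle M'$ the vertex $v$ is either $M$-exposed, whence $v\in X_A$ trivially, or an endpoint of an even alternating path whose other endpoint is an $M$-exposed vertex of the same color class $A$, again placing $v\in X_A$; the case $v\in B$ is symmetric. (3) $X_A$ is closed under $\onevec$-allowed edges, and likewise $X_B$: if $u\in X_A$ and $uw$ is $\onevec$-allowed, then $w\in X_A$.

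Granting these, the conclusion is immediate: if some $a\in A\cap V(C)$ were exposable then $a\in X_A$ by (2), and since $C$ is connected through $\onevec$-allowed edges, repeated application of (3) along a connecting path gives $V(C)\subseteq X_A$; by (1) this forces $V(C)\cap X_B=\emptyset$, so by (2) no vertex of $B\cap V(C)$ is exposable, and $C$ cannot be hooked up by $B$. The chief obstacle is the closure property (3). When $uw\in M$ one extends the alternating path certifying $u\in X_A$ by the matched edge $uw$; the delicate case is $uw\notin M$, where one selects a maximum matching $M'$ containing $uw$ and traverses the component of $M\triangle M'$ through $uw$, appending it to the path reaching $u$ and then shortcutting the resulting alternating walk to an alternating path ending at $w$. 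Checking that every branch of this traversal stays inside $X_A$ — rather than crossing into $X_B$, which (1) forbids, or into the balanced remainder — is where the parity bookkeeping must be carried out carefully, and bipartiteness, which excludes odd alternating closed walks, is precisely what makes the shortcutting legitimate.
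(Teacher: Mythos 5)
Your argument is correct, but note that the paper states this Fact without proof, as part of its review of the classical Dulmage--Mendelsohn decomposition; the closest in-paper analogue is Lemma~\ref{lem:inconst2veri}~\ref{item:inconst2veri:disjoint} together with Theorem~\ref{thm:unit}, and your route is genuinely different from that one. The paper works through the dual objects: it establishes that every verifying set $Z$ contains $\defib{G}{b}$, that $E[Z]$ consists of inevitable edges and $E[\complement{Z}]$ of forbidden ones (Lemma~\ref{lem:opt2slack}), and then reads off the disjointness of the two hooked-up families from the disjointness of the separating sets $(Z\cap A)\dot\cup(B\setminus Z)$ and $(A\setminus Z)\dot\cup(Z\cap B)$. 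You stay entirely on the primal side: fix one maximum matching $M$, define the reachability sets $X_A$ and $X_B$, and prove disjointness (1), correctness of the exposable-vertex localization (2) via $M\triangle M'$, and closure of $X_A$ under allowed edges (3). Your reduction of ``inconsistent'' to ``loose'' for $b=\onevec$ is right, and the final assembly from (1)--(3) is sound. The crux is (3), which you only sketch, but the sketch names the right tool: for an allowed non-matching edge $uw$ with $u\in X_A\cap B$ and $w\in A$, take a maximum $M'$ containing $uw$ and traverse the component $K$ of $M\triangle M'$ through $uw$; if $K$ is an alternating cycle, or a path whose $M$-exposed end lies in $A$, the traversal itself is an alternating path witnessing $w\in X_A$, while if $K$ is a path with both ends in $B$ one obtains a vertex of $X_A\cap X_B$ (equivalently an $M$-augmenting path), which (1) forbids. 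I checked that this case analysis closes, so there is no gap --- though observe that (1) and the bad case of (3) are the same augmenting-path argument in two guises. The paper's dual route buys uniformity over all $b$ (with inactive vertices handled separately); your primal route buys a self-contained proof at the level of $1$-matchings that does not presuppose K\"onig's theorem or any min--max statement.
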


\begin{fact} 
Any inconsistent $\onevec$-elementary component hooked up by $A$ (resp. by $B$) is minimal (resp. maximal) in the poset $(\ecompb{G}{\onevec}, \pardm{A})$. \end{fact}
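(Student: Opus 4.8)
The plan is to reduce minimality to a purely local condition on forbidden edges and then to rule out such edges using the structure of the exposable vertices. By the symmetry between $\pardm{A}$ and $\pardm{B}$ recorded after Theorem~\ref{thm:dm} (namely $C_1\pardm{A}C_2$ iff $C_2\pardm{B}C_1$), an element is maximal in $\pardm{A}$ exactly when it is minimal in $\pardm{B}$; hence the ``resp.'' clause follows from the principal claim with the roles of $A$ and $B$ interchanged, and it suffices to prove that a component $C$ hooked up by $A$ is minimal in $(\ecompb{G}{\onevec}, \pardm{A})$.

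First I would unwind the definition of $\pardm{A}$. If $C'\pardm{A}C$ with $C'\neq C$, then along a witnessing chain $D_1=C',\ldots,D_k=C$ take the largest index $j$ with $D_j\neq C$; the edge condition at $j$ gives $E[C\cap A, D_j\cap B]\neq\emptyset$, i.e.\ a $\onevec$-forbidden edge $ab$ with $a\in A\cap V(C)$ and $b\in B\setminus V(C)$ (forbidden since it joins distinct elementary components). Conversely, any such edge makes the component containing $b$ a strict predecessor of $C$. Thus $C$ is minimal if and only if no $\onevec$-forbidden edge has its $A$-end in $V(C)$ and its $B$-end outside $V(C)$, and the whole statement reduces to excluding such an edge when $C$ is hooked up by $A$.

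The core of the argument is a structural fact that I would prove by fixing a maximum matching $M$ and working with alternating reachability: writing $R_A$ for the set of vertices reachable from the $M$-exposed vertices of $A$ by $M$-alternating paths, one has $R_A\cap A=\defib{G}{\onevec}\cap A$, one has $R_A\cap B=\parNei{G}{R_A\cap A}$, and $R_A$ is closed in the sense that every neighbour of a vertex of $R_A\cap A$ lies in $R_A$. I would then establish two lemmas. Lemma~1 (every edge at an exposable $A$-vertex is allowed): if $a\in\defib{G}{\onevec}\cap A$ and $ab\in E(G)$, take a maximum matching $N$ exposing $a$; then $b$ is $N$-matched (otherwise $N+ab$ would be larger), and replacing the edge at $b$ by $ab$ yields a maximum matching containing $ab$. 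Lemma~2 (exposability propagates across allowed edges): if $b\in R_A\cap B$ and $ba$ is allowed, then $a\in R_A\cap A$. Applying these alternately along an allowed-edge path inside $C$ from a hooking vertex $a_0$ to an arbitrary $a\in A\cap V(C)$ shows that every $A$-vertex of $C$ is exposable; Lemma~1 then forces every edge leaving $A\cap V(C)$ to be allowed, hence internal to $C$, so the forbidden edge from the reduction cannot exist and $C$ is minimal.

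I expect Lemma~2 to be the main obstacle. The naive attempt --- extend the $M$-alternating path reaching $b$ by the edge $ba$ --- fails, because that path arrives at $b$ along an unmatched edge while $ba$ is also unmatched, so the resulting walk is not alternating. The fix I would use is to switch matchings: choose a maximum matching $M^{*}$ containing the allowed edge $ba$, and pick a neighbour $a^{*}\in R_A\cap A$ of $b$, which exists because $b\in R_A\cap B=\parNei{G}{R_A\cap A}$. Since $b$ is $M^{*}$-matched to $a$, the edge $a^{*}b$ is unmatched in $M^{*}$, so an $M^{*}$-alternating path from an $M^{*}$-exposed $A$-vertex to $a^{*}$, extended by the unmatched edge $a^{*}b$ and then the matched edge $ba$, reaches $a$; as membership in $\defib{G}{\onevec}$ is independent of the chosen maximum matching, this gives $a\in\defib{G}{\onevec}\cap A=R_A\cap A$. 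The remaining steps are routine once this switching trick is in place.
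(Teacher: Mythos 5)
Your argument is correct. Note first that the paper states this Fact without proof, as part of its review of the classical theory, so there is no in-paper proof to match; but your route is essentially the alternating-reachability argument that the paper itself deploys for the $b$-matching generalization (your set $R_A$ is the paper's $S_A\dot\cup T_A$ from Lemma~\ref{lem:path2veri}, your Lemma~2 plays the role of Lemma~\ref{lem:inconst2path}, and your conclusion that no edge leaves $A\cap V(C)$ is the $\onevec$-specialization of Lemma~\ref{lem:inconst2const}, from which the paper derives the $b$-analogue Lemma~\ref{lem:inconstorder}). The individual steps all check out: the reduction of minimality to the nonexistence of an edge from $A\cap V(C)$ to $B\setminus V(C)$ correctly unwinds the chain definition of $\pardm{A}$ at the last index $j$ with $D_j\neq C$; the identities $R_A\cap A=\defib{G}{\onevec}\cap A$ and $R_A\cap B=\parNei{G}{R_A\cap A}$ are the standard facts (and are what Lemma~\ref{lem:path2veri} proves for general $b$); your Lemma~1 is the usual exchange argument; and the matching-switching trick in Lemma~2 is exactly the right fix for the parity obstruction you identify. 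Two cosmetic remarks: you silently use that for $b=\onevec$ there are no inactive vertices, so ``inconsistent'' collapses to ``loose'' and the hooking vertex $a_0$ exists --- worth one sentence; and in Lemma~2 the concatenated walk through $a^{*}$, $b$, $a$ may revisit vertices, which is handled by the standard reduction to reachability in the oriented auxiliary digraph $\aux{G}{A}{B}{M^{*}}$ (the same convention the paper uses), so it is a routine omission rather than a gap.
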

Accordingly, a lower ideal (resp. an upper ideal)   $\mathcal{I}$ of the poset $(\ecompb{G}{\onevec}, \pardm{A})$ is  normalized  if and only if there exists a lower (resp. an upper) ideal $\mathcal{I}'$ such that $\mathcal{I} = \mathcal{I}' \cup \mathcal{I}_0 \setminus \mathcal{J}_0$ (resp. $\mathcal{I} = \mathcal{I}' \setminus \mathcal{I}_0 \cup \mathcal{J}_0$), where $\mathcal{I}_0$ and $\mathcal{J}_0$ are the sets of inconsistent $\onevec$-elementary component hooked up by $A$ and $B$, respectively.  

Using the concept of normalized ideals, 
the family of maximum stable sets (and accordingly, the family of minimum vertex cover as well) are characterized:  
\begin{theorem} \label{thm:st}
Let $G$ be a bipartite graph with color classes $A$ and $B$. 
A set of vertices $X\subseteq V(G)$ is a maximum stable set 
if and only if there is a complementary pair of normalized lower  and upper ideals $\mathcal{I}_A$ and $\mathcal{J}_A$ of the poset $(\ecompb{G}{\onevec}, \pardm{A})$ such that $X = X_A \dot\cup X_B$, 
where $X_A$ and $X_B$ are the projective unions of $\mathcal{I}_A$ over $A$ and of $\mathcal{I}_B$ 
over $B$, respectively. 
\end{theorem}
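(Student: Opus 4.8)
The plan is to reduce the statement to the individual elementary components and then reassemble them through the poset $(\ecompb{G}{\onevec}, \pardm{A})$. The key preliminary observations are that a maximum matching of $G$ is a disjoint union of maximum $b|_C$-matchings, one for each $C\in\ecompb{G}{\onevec}$, and that with $b=\onevec$ one has $b|_C=\onevec$ on every elementary component, because a forbidden edge is never allowed and hence never inevitable, so no inevitable edge leaves an elementary component. Writing $\nu$ for the matching number and $\nu(C)$ for that of the induced subgraph $C$, this gives $\nu=\sum_C\nu(C)$, and Theorem~\ref{thm:stminmax} yields that every maximum stable set has size $|V(G)|-\nu=\sum_C\bigl(|V(C)\cap A|+|V(C)\cap B|-\nu(C)\bigr)$. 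First I would record the three \emph{types} of elementary component, read off from K\"onig's theorem inside $C$ together with the Observation: a tight (consistent) component has $|V(C)\cap A|=|V(C)\cap B|=\nu(C)$; a component hooked up by $A$ has $\nu(C)=|V(C)\cap B|<|V(C)\cap A|$ with all exposed vertices in $A$; and symmetrically for $B$. Since $\onevec$ creates no inactive vertices, a component is inconsistent iff it is loose, and then it is hooked up by exactly one of $A,B$ (the first Fact after the definition of normalized ideals). Throughout I read $X_B$ as the projective union over $B$ of $\mathcal{J}_A$, viewed as the lower ideal $\mathcal{I}_B$ of $\pardm{B}$ via the symmetry of $\pardm{A}$ and $\pardm{B}$.

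For the backward direction I would take a complementary normalized pair $(\mathcal{I}_A,\mathcal{J}_A)$ and set $X=X_A\dot\cup X_B$. To see that $X$ is \emph{stable}, I distinguish the two kinds of edge. An allowed edge stays inside one component $C$, which lies in exactly one of $\mathcal{I}_A,\mathcal{J}_A$, so at most one colour class of $C$ meets $X$ and the edge is not inside $X$. A forbidden edge $uv$ with $u\in V(C_1)\cap A$ and $v\in V(C_2)\cap B$ joins distinct components and, applying the definition of $\pardm{A}$ to the two-term chain $C_2,C_1$, witnesses $C_2\pardm{A}C_1$; if both ends lay in $X$ we would have $C_1\in\mathcal{I}_A$ and $C_2\in\mathcal{J}_A$, yet $C_2\pardm{A}C_1\in\mathcal{I}_A$ forces $C_2\in\mathcal{I}_A$, contradicting complementarity. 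For \emph{maximality} I would compute $|X|$ component by component: $C\in\mathcal{I}_A$ contributes $|V(C)\cap A|$ and $C\in\mathcal{J}_A$ contributes $|V(C)\cap B|$. Normalization guarantees that no component hooked up by $B$ lies in $\mathcal{I}_A$ and no component hooked up by $A$ lies in $\mathcal{J}_A$, so in each case the contribution equals $|V(C)\cap A|+|V(C)\cap B|-\nu(C)$ (the tight case being insensitive to the side, as the two colour classes are equinumerous). Summing gives $|X|=|V(G)|-\nu$, which together with stability makes $X$ a maximum stable set.

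For the forward direction I would start from a maximum stable set $X$ and first show that $X\cap V(C)$ is a maximum stable set of each induced component $C$: each $X\cap V(C)$ is stable, so $|X\cap V(C)|\le|V(C)\cap A|+|V(C)\cap B|-\nu(C)$, and since these bounds sum to $|X|$ they are all attained. The \emph{crux} is then the structural lemma that a maximum stable set of an elementary component is always a single colour class: both colour classes when the component is tight, and (forced) the larger class when it is hooked up by $A$ or by $B$. Granting this, define $\mathcal{I}_A:=\{C:X\cap V(C)=V(C)\cap A\}$ and let $\mathcal{J}_A$ be its complement. To see $\mathcal{I}_A$ is a lower ideal it suffices to close it under a single step of the defining chain and iterate: if $C\in\mathcal{I}_A$ and a forbidden edge realizes one step toward $C$, its $A$-end lies in $X$, so by stability its $B$-end in the predecessor component $C'$ lies outside $X$, forcing $X\cap V(C')=V(C')\cap A$, i.e.\ $C'\in\mathcal{I}_A$; dually $\mathcal{J}_A$ is an upper ideal, and the two are complementary. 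Normalization is immediate from the type analysis: a component hooked up by $A$ has $X\cap V(C)=V(C)\cap A$, hence lies in $\mathcal{I}_A$, and one hooked up by $B$ lies in $\mathcal{J}_A$. Finally $X=X_A\dot\cup X_B$ holds by construction.

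I expect the genuine obstacle to be the colour-class lemma for the inconsistent components, namely that a one-sided deficient elementary component has a \emph{unique} maximum stable set. For the tight components this is the classical property that an elementary bipartite graph has exactly its two colour classes as minimum vertex covers. For a component hooked up only by $A$, I would argue that the canonical cover $V(C)\cap B$ is the unique minimum vertex cover: any other minimum vertex cover $K$ would omit some $b\in V(C)\cap B$, and then chasing $M$-alternating paths from the exposed $A$-vertices through the edges incident to $b$ (each of which is allowed) would produce a maximum matching exposing $b$, contradicting that $C$ is hooked up by $A$ and not by $B$. Making this alternating-path rotation precise, via the theory of allowed edges, is the technical heart of the argument; everything else is bookkeeping with ideals and the already-established facts about $\pardm{A}$.
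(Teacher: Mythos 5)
The paper states Theorem~\ref{thm:st} only as background, as a known classical result with citations, and supplies no proof of it; so there is no internal argument to compare yours against, and I assess the proposal on its own terms. Your architecture is sound and is the standard route: reduce to elementary components via $\nu=\sum_C\nu(C)$ and $|X|=|V(G)|-\nu$, classify each component as tight, hooked up by $A$, or hooked up by $B$, show that the trace of a maximum stable set on each component is a full colour class (forced to be the deficient side's class for the one-sided components), and encode the cross-component forbidden edges as the ideal condition for $\pardm{A}$. The backward direction (stability from the lower-ideal property plus the cardinality count using normalization) is complete as written, and the forward-direction bookkeeping (one-step closure of $\mathcal{I}_A$, normalization from the type analysis, complementarity) is correct.

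The one step that would not survive being written out is your sketch of the crux lemma for a component $C$ hooked up by $A$. You propose to derive, from the assumption that a minimum vertex cover $K$ of $C$ omits some $v\in V(C)\cap B$, a maximum matching exposing $v$; but omission from some minimum vertex cover does not imply exposability --- a single edge $ab$ already has $\defib{G}{\onevec}=\emptyset$ while each of its two minimum covers omits a vertex --- so that contradiction target is unreachable. The lemma itself is true, and the repair stays inside your toolkit: since $|K|=\nu(C)$ and the edges of a maximum matching $M_C$ of $C$ are disjoint, each edge of $M_C$ meets $K$ in exactly one end and $K$ contains no $M_C$-exposed vertex; now propagate from an exposed vertex $u_0\in V(C)\cap A$ (so $u_0\notin K$) along alternating paths --- a non-matching edge leaving an uncovered $A$-vertex forces its $B$-end into $K$, and the matching edge at that $B$-vertex then forces its $A$-end out of $K$. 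Since every vertex of a loose component hooked up by $A$ is reached by such paths (the $\onevec$-elementary analogue of Lemma~\ref{lem:inconst2path}), this yields $K\supseteq V(C)\cap B$ directly, hence $K=V(C)\cap B$ by cardinality. The same propagation along an $M$-saturated path between a putative $a\in K\cap A$ and $b\in K\cap B$ gives the tight case you cite as classical. With that substitution the proof is complete.
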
 

Given a maximum $\onevec$-matching, 
the Dulmage-Mendelsohn decomposition can be computed in $O(n+m)$ time, where $n$ and $m$ denote the numbers of vertices and edges, respectively. 
Therefore, given $G$ and $b$, 
the Dulmage-Mendelsohn decomposition can be computed in strongly polynomial time~\cite{murota2009matrices}. 

\section{Overview of Our New Theory}

A min-max relation is known for $b$-matchings in bipartite graphs~\cite{schrijver2003, pap2005alternating}: 
\begin{theorem}\label{thm:minmax} 
Let $G$ be a bipartite graph, and let $b: V(G)\rightarrow \mathbb{Z}_{\nonneg}$. 
Then, the size of a maximum $b$-matching of $G$ 
is equal to the minimum value of $b(\complement{Z}) + E[Z]$, 
where $Z$ is taken over all subsets of $V(G)$. 
\end{theorem}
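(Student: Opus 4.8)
The plan is to introduce the shorthand $f(Z) := b(\complement{Z}) + |E[Z]|$ for the quantity being minimized (reading $|E[Z]|$ as the number of edges with both ends in $Z$) and to let $\beta$ denote the size of a maximum $b$-matching of $G$; I would then prove the two inequalities $\beta \le \min_Z f(Z)$ and $\beta \ge \min_Z f(Z)$ separately. The first is weak duality and is routine: for any $b$-matching $M$ and any $Z \subseteq V(G)$, split $M$ into the edges lying in $E[Z]$ and the edges meeting $\complement{Z}$; the former are at most $|E[Z]|$ in number, while the latter are at most $\sum_{v\in\complement{Z}}|\parcut{G}{v}\cap M| \le \sum_{v\in\complement{Z}} b(v) = b(\complement{Z})$, because each such edge is counted at one of its ends in $\complement{Z}$. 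Hence $|M| \le f(Z)$, and maximizing over $M$ and minimizing over $Z$ gives $\beta \le \min_Z f(Z)$.

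For the reverse inequality I would reduce to the K\"onig--Egerv\'ary theorem (Theorem~\ref{thm:vcminmax}). First, build an auxiliary bipartite graph $H$ from $(G,b)$: replace each vertex $v$ by a bundle of $b(v)$ copies, and for each edge $e = uv$ of $G$ add a two-vertex gadget $\{e_u, e_v\}$ joined by an edge, with $e_u$ adjacent to every copy of $u$ and $e_v$ adjacent to every copy of $v$. Placing the copies of each color class and the gadget endpoints on the appropriate sides shows $H$ is bipartite. In any $1$-matching of $H$, each gadget contributes at most two edges, and it contributes two exactly when both $e_u$ and $e_v$ are matched into their bundles, which consumes one unit of capacity at each of $u$ and $v$; the edges attaining this double contribution therefore form a $b$-matching of $G$. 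It follows that a maximum $1$-matching of $H$ has size exactly $|E(G)| + \beta$.

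Next I would convert a minimum vertex cover $K$ of $H$ into a subset $Z \subseteq V(G)$, taking $Z$ to be the set of vertices whose entire copy-bundle is left uncovered by $K$ and $\complement{Z}$ to be the rest. A case analysis on each gadget, according to whether both, one, or neither of $u, v$ lies in $Z$, shows that $|K| \ge |E(G)| + f(Z)$. Combining this with K\"onig's theorem, which identifies $|K|$ with the maximum $1$-matching size $|E(G)| + \beta$ computed above, cancels the common term $|E(G)|$ and yields $f(Z) \le \beta$, hence $\min_Z f(Z) \le \beta$. Together with weak duality this proves the asserted equality.

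I expect the main obstacle to be the vertex-cover step: one must verify that an optimal cover can be normalized to cover each bundle in an all-or-nothing fashion without increasing $|K|$, and then keep the accounting straight --- covering a full bundle of $v$ costs $b(v)$ and places $v$ in $\complement{Z}$, while covering a lone gadget edge $e_u e_v$ costs $1$ and corresponds to an edge of $E[Z]$, with the degenerate case $b(v)=0$ handled separately. An alternative that avoids the gadget altogether is linear-programming duality with total unimodularity: the maximum-$b$-matching program, maximizing $\mathbf{1}^\top x$ subject to the vertex-incidence inequalities bounded by $b$ together with $0 \le x \le 1$, has an integral optimum because the bipartite incidence matrix is totally unimodular, and its dual admits an integral $0/1$ optimum whose support on the vertices is exactly $\complement{Z}$ for the minimizing $Z$. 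I would present the reduction as the primary proof and record the linear-programming argument as a remark.
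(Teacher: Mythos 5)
Your weak-duality half coincides with the paper's Lemma~\ref{lem:min2max}, but your reverse direction takes a genuinely different route. The paper never reduces to the $1$-matching case: its proof of Theorem~\ref{thm:minmax} is the remark that Lemma~\ref{lem:min2max} together with Lemma~\ref{lem:path2veri} suffices, where Lemma~\ref{lem:path2veri} works directly with a maximum $b$-matching $M$, defines $S_A$ (vertices of $A$ admitting $M$-wedge paths to $M$-loose vertices of $A$) and $T_A$ (vertices of $B$ admitting $M$-exposed paths to such vertices), and verifies by Lemma~\ref{lem:slack2opt} that $S_A\cup(B\setminus T_A)$ attains equality. That alternating-path construction is what the rest of the paper is built on: it identifies $S_A$ with $\defib{G}{b}\cap A$ and exhibits the canonical verifying set tied to the inconsistent flexible components, so the min-max theorem comes out as a by-product of structural information the paper needs anyway. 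Your reduction to Theorem~\ref{thm:vcminmax} via vertex splitting and edge gadgets (or the total-unimodularity remark) is a legitimate and more modular proof of the bare equality, at the cost of importing K\"onig--Egerv\'ary and of yielding no canonical dual optimizer. One concrete caution on your cover-to-$Z$ translation: with $Z$ defined as the set of vertices whose entire bundle is uncovered, every vertex with $b(v)=0$ lands in $Z$ vacuously, and then $|K|\ge |E(G)|+b(\complement{Z})+|E[Z]|$ fails --- for a single edge $uv$ with $b(u)=0$, $b(v)=1$, the cover $\{e_v\}$ has size $1=|E(G)|$ while your $Z=\{u,v\}$ gives $f(Z)=1$. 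The repair is to place all $b(v)=0$ vertices in $\complement{Z}$ (this costs nothing in $b(\complement{Z})$); with that convention, every edge of $E[Z]$ has both gadget vertices forced into $K$ by the two nonempty uncovered bundles, the all-or-nothing normalization of bundles follows from minimality of $K$ as you indicate, and the accounting closes. You flagged this degenerate case but left it unresolved, so it should be written out rather than deferred.
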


A {\em $b$-verifying set} is a set of vertices $Z$ that attains the minimum in Theorem~\ref{thm:minmax}, 
namely, whose $b(\complement{Z}) + |E[Z]|$ is equal to the size of a maximum $b$-matchings.

We establish the theory of $b$-matching analogue of the Dulmage-Mendelsohn decomposition, 
considering $b$-verifying sets and $b$-flexible components. 
The main theorems that comprise the heart of this new theory are the following: 
\begin{enumerate}
\item 
The set of flexible components form a poset with respect to a canonical binary relation, which is analogous to the poset $(\ecompb{G}{\onevec}, \pardm{W})$. (Theorem~\ref{thm:order}) 
\item 
The family of $b$-verifying set is characterized using this poset over flexible components, 
in totally analogical way to the characterization of the family of maximum stable sets. (Theorem~\ref{thm:veri}) 
\end{enumerate}  

These  structures can be computed in strongly polynomial time, which we will see in Theorem~\ref{thm:alg}. 

If we restrict ourselves to the case $b = \onevec$, the structure given by our results is generally a refinement of the classical Dulmage-Mendelsohn decomposition for $1$-matchings; note that $\onevec$-verifying sets and $\onevec$-flexible components are more fine-grained concepts than their classical counterparts, namely, than the maximum stable sets and the $\onevec$-elementary components. 
Our results for the case $b = \onevec$ totally coincide with the classical one if and only if there is no $\onevec$-inevitable edge.

\section{The Dulmage-Mendelsohn Decomposition for $b$-Matchings} \label{sec:dmb}

\subsection{Preparation} \label{sec:dmb:prep} 
Throughout Section~\ref{sec:dmb}, 
unless otherwise stated, 
let $G$ be a bipartite graph with color classes $A$ and $B$, and let $b:V(G)\rightarrow \mathbb{Z}_{\nonneg}$. 
Note that, as the roles of $A$ and $B$ are given arbitrarily, every statement  also holds by swapping $A$ and $B$.

In Section~\ref{sec:dmb:prep}, 
we present lemmas on the relationship between maximum $b$-matchings and verifying sets, 
and thus give an observation about verifying sets and  flexible components.  

The next lemma can be deduced from Theorem~\ref{thm:minmax}, however we present it with a stand-alone proof so to ensure that the whole new theory of ours  is self-contained. 
\begin{lemma}\label{lem:min2max} 
Let $G$ be a bipartite graph, and let $b: V(G)\rightarrow \mathbb{Z}_{\nonneg}$. 
Then, 
$|M| \le b(\complement{Z}) + |E[Z]|$ holds  
for any $b$-matching $M$ of $G$ and any set of vertices $Z\subseteq V(G)$. 
The equality holds if and only if 
\begin{rmenum} 
\item \label{item:min2max:inevitable} $E[Z]\subseteq M$ holds, 
\item \label{item:min2max:tight} every vertex in $\complement{Z}$ is $M$-tight, and 
\item \label{item:min2max:forbidden} $E[\complement{Z}]\cap M = \emptyset$. 
\end{rmenum} 
\end{lemma}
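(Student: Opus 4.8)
The plan is to bound $|M|$ by partitioning its edges according to how their ends meet $Z$ and $\complement{Z}$. Every edge of $M$ lies in exactly one of $E[Z]$, $E[\complement{Z}]$, or $E[Z,\complement{Z}]$, so I would begin from
\[
|M| = |M\cap E[Z]| + |M\cap E[\complement{Z}]| + |M\cap E[Z,\complement{Z}]|.
\]
The first summand is at most $|E[Z]|$ trivially; the remaining two I would control by a single incidence count over the vertices of $\complement{Z}$.

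For that count, each edge of $M\cap E[\complement{Z}]$ meets $\complement{Z}$ twice and each edge of $M\cap E[Z,\complement{Z}]$ meets it once, so
\[
2\,|M\cap E[\complement{Z}]| + |M\cap E[Z,\complement{Z}]| = \sum_{v\in\complement{Z}} |\parcut{G}{v}\cap M| \le b(\complement{Z}),
\]
the inequality being the defining $b$-matching constraint summed over $\complement{Z}$. Dropping the nonnegative term $|M\cap E[\complement{Z}]|$ from the left side gives $|M\cap E[\complement{Z}]| + |M\cap E[Z,\complement{Z}]| \le b(\complement{Z})$, and combining this with $|M\cap E[Z]| \le |E[Z]|$ yields $|M| \le b(\complement{Z}) + |E[Z]|$.

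For the equality characterization I would trace exactly which estimates above were used. The bound $|M|\le b(\complement{Z})+|E[Z]|$ is tight if and only if both $|M\cap E[Z]| = |E[Z]|$ and $|M\cap E[\complement{Z}]| + |M\cap E[Z,\complement{Z}]| = b(\complement{Z})$ hold. The first is equivalent to \ref{item:min2max:inevitable}. The second is a chain of two estimates, each of which must then be tight: dropping $|M\cap E[\complement{Z}]|$ is lossless precisely when $M\cap E[\complement{Z}]=\emptyset$, which is \ref{item:min2max:forbidden}, and the summed constraint is tight precisely when every $v\in\complement{Z}$ satisfies $|\parcut{G}{v}\cap M| = b(v)$, which is \ref{item:min2max:tight}; conversely, \ref{item:min2max:inevitable}--\ref{item:min2max:forbidden} make all estimates equalities. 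I expect no real obstacle here: the argument is pure double counting and does not even invoke bipartiteness. The only point demanding care is the equality direction, where the two independent sources of slack in the bound on $|M\cap E[\complement{Z}]| + |M\cap E[Z,\complement{Z}]|$---the discarded surplus and the vertex-wise $b$-matching inequality---must be kept separate so that conditions \ref{item:min2max:forbidden} and \ref{item:min2max:tight} are attributed correctly.
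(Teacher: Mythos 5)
Your proof is correct and follows essentially the same route as the paper's: the paper bounds $|M\cap E[Z]|$ by $|E[Z]|$ and $|M\setminus E[Z]|$ by $b(\complement{Z})$ and reads off the equality conditions, which is exactly your argument with $M\setminus E[Z]$ split into $M\cap E[\complement{Z}]$ and $M\cap E[Z,\complement{Z}]$. You merely make explicit the double-counting and the two sources of slack that the paper leaves as ``easily observed.''
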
 

\begin{proof}
Obviously, $|M\cap E[Z]| \le |E[Z]|$ 
and $|M\setminus E[Z]| \le b(\complement{Z})$ hold. 
Therefore, $|M| \le  b(\complement{Z}) + |E[Z]|$ holds. 
The necessary and sufficient condition for the equality 
is easily observed by considering the above two inequality. 
\end{proof}

Lemma~\ref{lem:min2max} implies the following two lemmas:

\begin{lemma} \label{lem:opt2slack}
Let $G$ be a bipartite graph, and let $b: V(G)\rightarrow \mathbb{Z}_{\nonneg}$. 
If $Z\subseteq V(G)$ is a verifying set, then, for any maximum $b$-matching $M$, 
the following hold: 
\begin{rmenum} 
\item \label{item:opt2slack:inevitable} $E[Z]\subseteq M$;
\item \label{item:opt2slack:forbidden} $E[\complement{Z}]\cap M  = \emptyset$; 
\item \label{item:opt2slack:loose} Any $M$-loose vertex is contained in $Z$. 
\end{rmenum} 
Accordingly, any edge in $E[Z]$ is inevitable, 
whereas any edge in $E[\complement{Z}]$ is forbidden. 
\end{lemma}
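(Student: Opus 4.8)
The plan is to derive everything directly from the equality case of Lemma~\ref{lem:min2max}. First I would observe that, because $Z$ is a verifying set, $b(\complement{Z}) + |E[Z]|$ equals the size of a maximum $b$-matching, and because $M$ is itself a maximum $b$-matching, $|M|$ equals that same optimal value. Hence the inequality $|M| \le b(\complement{Z}) + |E[Z]|$ supplied by Lemma~\ref{lem:min2max} holds with equality for the pair $(M, Z)$, so all three equality conditions \ref{item:min2max:inevitable}, \ref{item:min2max:tight}, and \ref{item:min2max:forbidden} are satisfied.

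Next I would simply read off the three items. Condition \ref{item:min2max:inevitable} is verbatim item \ref{item:opt2slack:inevitable}, and condition \ref{item:min2max:forbidden} is verbatim item \ref{item:opt2slack:forbidden}. For item \ref{item:opt2slack:loose} I would use the contrapositive of condition \ref{item:min2max:tight}: since every vertex of $\complement{Z}$ is $M$-tight, no $M$-loose vertex can lie in $\complement{Z}$, and therefore every $M$-loose vertex lies in $Z$.

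Finally, for the concluding assertion I would note that the three items hold for \emph{every} maximum $b$-matching $M$. Since item \ref{item:opt2slack:inevitable} shows that each edge of $E[Z]$ is contained in an arbitrary maximum $b$-matching, every such edge is allowed and moreover lies in all maximum $b$-matchings, hence is inevitable; dually, item \ref{item:opt2slack:forbidden} shows that no edge of $E[\complement{Z}]$ lies in any maximum $b$-matching, so each such edge is forbidden.

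I expect no genuine obstacle here: the content is entirely a translation of the equality characterization of Lemma~\ref{lem:min2max}. The only point that warrants care is the bookkeeping that forces equality, namely confirming that both $|M|$ and $b(\complement{Z}) + |E[Z]|$ equal the common optimal value, with the inequality oriented exactly as in Lemma~\ref{lem:min2max}, so that its equality conditions may legitimately be invoked.
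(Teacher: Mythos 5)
Your proposal is correct and follows exactly the route the paper intends: the paper gives no separate proof of Lemma~\ref{lem:opt2slack}, stating only that it is implied by Lemma~\ref{lem:min2max}, and your argument — tightness of the inequality for the pair $(M,Z)$ forces the three equality conditions, which translate verbatim (with the contrapositive for \ref{item:opt2slack:loose}) into the three items, and quantifying over all maximum $b$-matchings yields the inevitability/forbiddenness claims — is precisely that derivation, carried out with the right care about why equality holds.
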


\begin{lemma} \label{lem:slack2opt} 
Let $G$ be a bipartite graph, and let $b: V(G)\rightarrow \mathbb{Z}_{\nonneg}$. 
Let $M$ be a  $b$-matching. 
If a set of vertices $Z\subseteq V(G)$ satisfies 
\begin{rmenum}
\item \label{item:slack2opt:inevitable} $E[Z]\subseteq M$, 
\item \label{item:slack2opt:forbidden} $E[\complement{Z}] \cap M = \emptyset$, 
\item \label{item:slack2opt:loose} any $M$-loose vertex is contained in $Z$, 
\end{rmenum} 
then $M$ is a maximum $b$-matching and $Z$ is a verifying set. 
\end{lemma}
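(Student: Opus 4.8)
The plan is to read Lemma~\ref{lem:slack2opt} as the exact converse of the equality characterization already established in Lemma~\ref{lem:min2max}, so that the bulk of the argument is simply a matter of matching hypotheses to conclusions. First I would check that the three hypotheses \ref{item:slack2opt:inevitable}--\ref{item:slack2opt:loose} reproduce, for this specific pair $(M, Z)$, the three equality conditions \ref{item:min2max:inevitable}--\ref{item:min2max:forbidden} of Lemma~\ref{lem:min2max}. Hypothesis \ref{item:slack2opt:inevitable} is literally condition \ref{item:min2max:inevitable} ($E[Z]\subseteq M$), and hypothesis \ref{item:slack2opt:forbidden} is literally condition \ref{item:min2max:forbidden} ($E[\complement{Z}]\cap M = \emptyset$). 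The only mild translation is between hypothesis \ref{item:slack2opt:loose} and condition \ref{item:min2max:tight}: since $M$ is a $b$-matching, every vertex $v$ satisfies $|\parcut{G}{v}\cap M| \le b(v)$ and is therefore either $M$-loose or $M$-tight, exactly one of the two. Hence the statement ``every $M$-loose vertex lies in $Z$'' is the contrapositive of ``every vertex of $\complement{Z}$ is $M$-tight'', so \ref{item:slack2opt:loose} is equivalent to \ref{item:min2max:tight}.

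With all three equality conditions verified, the equality clause of Lemma~\ref{lem:min2max} yields $|M| = b(\complement{Z}) + |E[Z]|$. To conclude that $M$ is maximum, I would invoke the inequality clause of the same lemma: for an arbitrary $b$-matching $M'$ of $G$, applying Lemma~\ref{lem:min2max} to $M'$ and the same set $Z$ gives $|M'| \le b(\complement{Z}) + |E[Z]| = |M|$. As $M'$ ranges over all $b$-matchings, this shows $M$ has the greatest number of edges, i.e., $M$ is a maximum $b$-matching.

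Finally, $Z$ is a verifying set by definition: the quantity $b(\complement{Z}) + |E[Z]|$ coincides with $|M|$, which we have just shown equals the size of a maximum $b$-matching. Since the genuine analytic content (the inequality and its equality case) is entirely carried by Lemma~\ref{lem:min2max}, I do not expect any real obstacle here; the one point requiring a moment's care is the loose/tight dichotomy used to pass from hypothesis \ref{item:slack2opt:loose} to condition \ref{item:min2max:tight}, which relies on $M$ being a $b$-matching so that no vertex is oversaturated.
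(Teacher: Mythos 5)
Your proposal is correct and is exactly the derivation the paper intends: the paper states that Lemma~\ref{lem:slack2opt} is implied by Lemma~\ref{lem:min2max} and omits the details, which are precisely your matching of the three hypotheses to the equality conditions (using the loose/tight dichotomy for condition \ref{item:min2max:tight}) followed by applying the inequality to an arbitrary $b$-matching $M'$ with the same $Z$.
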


A set of vertices is {\em separating} if it is empty or is the union of vertex sets of some flexible components. 
We can now present a fundamental observation about relationship between verifying sets and flexible components. 

\begin{lemma} \label{lem:inconst2veri}
Let $G$ be a bipartite graph with color classes $A$ and $B$, and let $b: V(G)\rightarrow \mathbb{Z}_{\nonneg}$. 
Let $Z$ be a verifying set, and 
let $Z_1 := (Z\cap A)\dot\cup (B\setminus Z)$ and $Z_2 := (A\setminus Z) \dot\cup (Z\cap B)$. 
Then, 
\begin{rmenum} 
\item \label{item:inconst2veri:separating} $Z_1$ and $Z_2$ are separating,  
\item \label{item:inconst2veri:inconst} any $C\in \inconstbh{G}{b}{A}$ (resp. $C\in\inconstbh{G}{b}{B}$) satisfies $V(C)\subseteq Z_1$ (resp. $V(C) \subseteq Z_2$),  and 
\item \label{item:inconst2veri:disjoint} $\inconstbh{G}{b}{A}\cap \inconstbh{G}{b}{B} = \emptyset$. 
\end{rmenum} 
\end{lemma}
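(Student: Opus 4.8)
The plan is to begin by noting that $\{Z_1, Z_2\}$ is a partition of $V(G)$: the set $Z_1$ gathers the $A$-vertices inside $Z$ together with the $B$-vertices outside $Z$, and $Z_2$ gathers the complementary vertices, so $Z_1 \dot\cup Z_2 = V(G)$ with $Z_1 \cap Z_2 = \emptyset$. All three parts are then driven by one structural fact about flexible edges. Since $Z$ is a verifying set, Lemma~\ref{lem:opt2slack} tells us that every edge of $E[Z]$ is inevitable and every edge of $E[\complement{Z}]$ is forbidden; a flexible edge, being allowed but not inevitable, therefore lies in neither $E[Z]$ nor $E[\complement{Z}]$ and so has exactly one end in $Z$ and one end in $\complement{Z}$. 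Writing such an edge as $uv$ with $u \in A$ and $v \in B$, the end lying in $Z$ forces both ends into a common block: if $u \in Z$ then $u \in Z \cap A$ and $v \in B \setminus Z$, so both lie in $Z_1$, while if $v \in Z$ then symmetrically both lie in $Z_2$.

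For part~\ref{item:inconst2veri:separating}, recall that a flexible component $G[V(C)]$ is, by definition, connected through its flexible edges (and, by Fact~\ref{fact:allflex}, all of its edges are flexible). Since each flexible edge keeps both of its ends in the same block, propagating along the flexible edges of $C$ shows that $V(C)$ lies wholly in one block; a trivial flexible component is a single vertex and likewise lies in one block. As every vertex belongs to exactly one flexible component, each of $Z_1$ and $Z_2$ is a union of vertex sets of flexible components, hence separating.

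For part~\ref{item:inconst2veri:inconst}, let $C \in \inconstbh{G}{b}{A}$; by part~\ref{item:inconst2veri:separating} it suffices to find one vertex of $C$ in $Z_1$. If $C$ is loose and hooked up by $A$, choose $w \in V(C) \cap \defib{G}{b} \cap A$; since $w$ is $M$-loose for some maximum $b$-matching $M$, Lemma~\ref{lem:opt2slack}\ref{item:opt2slack:loose} gives $w \in Z$, whence $w \in Z \cap A \subseteq Z_1$. If instead $C$ is inactive with sole vertex $v$ adjacent to some $u \in \defib{G}{b} \cap A$, then $u \in Z \cap A$ by the same argument, bipartiteness gives $v \in B$, and the edge $uv$ is forbidden because $v$ is inactive; hence $uv \notin E[Z]$ and so $v \notin Z$, giving $v \in B \setminus Z \subseteq Z_1$. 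In either case $V(C) \subseteq Z_1$, and the statement for $\inconstbh{G}{b}{B}$ follows by swapping $A$ and $B$, an operation under which $Z_1$ and $Z_2$ interchange. Finally, part~\ref{item:inconst2veri:disjoint} is immediate: a component hooked up by both $A$ and $B$ would, by part~\ref{item:inconst2veri:inconst}, be contained in both $Z_1$ and $Z_2$, contradicting $Z_1 \cap Z_2 = \emptyset$.

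I expect the one delicate point to be the inactive case of part~\ref{item:inconst2veri:inconst}: there the conclusion $v \notin Z$ must be extracted from Lemma~\ref{lem:opt2slack} by observing that the forbidden edge $uv$ cannot belong to $E[Z]$. Everything else reduces to bookkeeping with the partition $\{Z_1, Z_2\}$ once the flexible-edge straddling property of the first paragraph is established.
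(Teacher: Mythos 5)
Your proof is correct and follows essentially the same route as the paper: both derive everything from Lemma~\ref{lem:opt2slack} (flexible edges must straddle $Z$ and $\complement{Z}$, so each flexible component lies in a single block $Z_1$ or $Z_2$, and $\defib{G}{b}\subseteq Z$ then pins down the inconsistent components), with yours merely spelling out details the paper leaves implicit. One small caution: drop the parenthetical appeal to Fact~\ref{fact:allflex}, which is proved later in the paper via results that depend on this very lemma; your argument only needs that a flexible component is connected through its flexible edges, which holds by definition.
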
 

\begin{proof} 
From the last statement of Lemma~\ref{lem:opt2slack}, it is obvious that \ref{item:inconst2veri:separating} follows. 
Additionally, Lemma~\ref{lem:opt2slack} \ref{item:opt2slack:loose} implies  $\defib{G}{b}\subseteq Z$. 
Hence, we have $V(C)\subseteq Z_1$ for any $C\in\inconstbh{G}{b}{A}$. 
Therefore, from Lemma~\ref{lem:opt2slack} \ref{item:opt2slack:inevitable}, 
if $v \in B$ is an inactive vertex with $\parNei{G}{v}\cap \defib{G}{b} \neq \emptyset$, 
then $v\in Z_1$ holds. 
Therefore, we obtain \ref{item:inconst2veri:inconst}. 
As $Z_1$ and $Z_2$ are disjoint, this immediately proves \ref{item:inconst2veri:disjoint}. 
\end{proof}

\subsection{Structure of Inconsistent Flexible Components} 
\subsubsection{Canonical Verifying Set} 
The goal of this section is to obtain Theorem~\ref{thm:unit}, which claims the existence of  two special verifying sets. 

Let $M$ be a $b$-matching. 
We say a path or a circuit  $M$-alternating if edges in $M$ and not in $M$ appear alternately. 
More precisely, 
a circuit $C$ is {\em $M$-alternating} 
if $|\parcut{C}{v} \cap M| = 1$ for every $v\in V(C)$. 
We define three types of $M$-alternating paths: 
A path $P$ with ends $x$ and $y$ is {\em $M$-wedge} from $x$ to $y$ if $|\parcut{P}{v} \cap M| = 1$ for each $v\in V(P)\setminus \{y\}$ whereas $\parcut{P}{y}\cap M = \emptyset$; 
A path $P$ with ends $x$ and $y$ is {\em $M$-saturated} (resp. {\em $M$-exposed}) between $x$ and $y$  if $|\parcut{P}{v} \cap M| = 1$ (resp. $|\parcut{P}{v} \cap (E(G)\setminus M)| = 1)$ for each $v\in V(P)$. 

The next one is easy to confirm: 
\begin{lemma} \label{lem:path2allowed}
Let $G$ be a bipartite graph, and let $b: V(G)\rightarrow \mathbb{Z}_{\nonneg}$. 
Let $M$ be a maximum $b$-matching,   
and let $v\in V(G)$ be an $M$-loose vertex. 
Let  $P$ be an $M$-wedge path from a vertex $u\in V(G)$ to the vertex $v$. 
 Then, $M\triangle E(P)$ is also a maximum $b$-matching of $G$. 
 Accordingly, all edges of $P$ are contained in the same flexible component. 
\end{lemma}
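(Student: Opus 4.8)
The plan is to show that an $M$-wedge path behaves like an ordinary transfer path, so that toggling $M$ along it changes neither the size nor the feasibility of the matching. First I would pin down the alternation pattern of $P$. Writing $P$ as $u = p_0, p_1, \ldots, p_k = v$, the defining conditions of an $M$-wedge force $p_0p_1 \in M$ (since $u$ carries exactly one $M$-edge within $P$), and propagating the ``exactly one $M$-edge per internal vertex'' condition along $P$ gives $p_{i-1}p_i \in M$ precisely when $i$ is odd. The terminal condition $\parcut{P}{v}\cap M = \emptyset$ then forces $k$ to be even, so $P$ carries equally many $M$-edges and non-$M$-edges, namely $k/2$ of each.

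Next I would verify that $M' := M \triangle E(P)$ is a maximum $b$-matching. The size claim is immediate from the previous paragraph: the toggle deletes $k/2$ edges and inserts $k/2$ edges, so $|M'| = |M|$, and $M'$ is maximum once shown to be feasible. For feasibility I would check $|\parcut{G}{w}\cap M'| \le b(w)$ vertex by vertex. Each internal vertex $p_i$ meets exactly one $M$-edge and one non-$M$-edge of $P$, so toggling leaves its incidence count unchanged. The endpoint $u$ loses its single $M$-edge $p_0p_1$, so its count only decreases. The endpoint $v$ gains the edge $p_{k-1}p_k \notin M$, raising its count by one; here the hypothesis that $v$ is $M$-loose, i.e.\ $|\parcut{G}{v}\cap M| < b(v)$, supplies exactly the slack needed to keep the constraint satisfied. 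Vertices off $P$ are untouched. Hence $M'$ is a maximum $b$-matching, which establishes the first assertion of the lemma.

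Finally, for the flexibility claim I would argue edge by edge. Every edge of $P$ lies in exactly one of the two maximum $b$-matchings $M$ and $M'$: the $M$-edges of $P$ are deleted by the toggle and the non-$M$-edges are inserted. Thus each edge of $P$ is contained in some maximum $b$-matching (hence $b$-allowed) yet is absent from another (hence not $b$-inevitable), so it is $b$-flexible. Since $P$ is connected and all of its edges are flexible, its vertices all lie in a single connected component of the subgraph induced by the flexible edges, and therefore all edges of $P$ belong to one and the same flexible component.

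The argument is essentially bookkeeping; the only point requiring genuine care is the endpoint analysis — recognising that the looseness of $v$ is precisely what guarantees feasibility of $M'$ at $v$, and that the parity forced by the wedge conditions is what makes the numbers of deleted and inserted edges coincide. I expect no further obstacle, and in fact bipartiteness is not used in this particular lemma, being inherited only from the ambient conventions of the section.
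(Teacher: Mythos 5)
Your proof is correct and is precisely the routine verification the paper has in mind when it states this lemma without proof (``easy to confirm''): the alternation pattern forced by the wedge conditions, the vertex-by-vertex feasibility check using the looseness of $v$, and the observation that each edge of $P$ lies in exactly one of $M$ and $M\triangle E(P)$. No issues.
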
 

The first statement and its proof of the next lemma  have been known; see Pap~\cite{pap2005alternating}, which claims that, given a maximum $b$-matching, we can construct a verifying set. The other statements prove that this verifying set is, in fact, canonical, according to its relationship with inconsistent flexible components: 
\begin{lemma} \label{lem:path2veri} 
Let $G$ be a bipartite graph with color classes $A$ and $B$,  
let $b: V(G)\rightarrow \mathbb{Z}_{\nonneg}$, 
and let $M$ be a maximum $b$-matching of $G$. 
Let $U_A$ be the set of $M$-loose vertices in $A$. 
Let $S_A\subseteq A$ be the set of vertices from which to vertices in $U_A$   
there exist  $M$-wedge paths, 
and let $T_A \subseteq B$ be the set of vertices between which and vertices in $U_A$ 
there exit  $M$-exposed paths. 
Then, 
\begin{rmenum} 
\item \label{item:path2veri:veri} $S_A\cup (B\setminus T_A)$ is a verifying set, and 
\item \label{item:path2veri:d} $S_A = \defib{G}{b}\cap A$. 
\end{rmenum} 
Accordingly, if $Z_1 := S_A \dot\cup T_A$, 
then $Z_1$ is the disjoint union of vertex sets of all inconsistent flexible components hooked up by $A$. 
\end{lemma}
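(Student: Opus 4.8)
The plan is to prove the three numbered claims and then assemble the final "Accordingly" statement from them.

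\textbf{Proof proposal.}

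The plan is to establish \ref{item:path2veri:veri} and \ref{item:path2veri:d} first, and then derive the concluding identification of $Z_1 = S_A \dot\cup T_A$ with the union of vertex sets of inconsistent flexible components hooked up by $A$.

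First, I would prove \ref{item:path2veri:veri} by verifying that $Z := S_A \cup (B\setminus T_A)$ satisfies the three conditions of Lemma~\ref{lem:slack2opt}, which would simultaneously certify that $M$ is maximum (already assumed) and that $Z$ is a verifying set. The complement is $\complement{Z} = (A\setminus S_A)\dot\cup T_A$. Condition \ref{item:slack2opt:loose} (every $M$-loose vertex lies in $Z$) holds for $M$-loose vertices in $A$ since $U_A\subseteq S_A$ trivially (each $u\in U_A$ is reached by the length-zero $M$-wedge path from $u$ to itself); for $M$-loose vertices in $B$ I would argue that such a vertex cannot lie in $T_A$, since an $M$-exposed path from an $M$-loose $B$-vertex to an $M$-loose $A$-vertex would be an augmenting structure contradicting maximality of $M$. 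For \ref{item:slack2opt:inevitable} and \ref{item:slack2opt:forbidden}, the key is to examine each edge $e = ab$ with $a\in A$, $b\in B$ according to whether $a\in S_A$ and $b\in T_A$, and show that the definitions of $S_A$ (via $M$-wedge paths) and $T_A$ (via $M$-exposed paths) force the "saturated" and "unsaturated" edges into the correct side. The crucial closure property is that if $a\in S_A$ and $ab\in M$, then $b\in T_A$ (extend the wedge path through the matching edge), and if $b\in T_A$ and $ab\notin M$, then $a\in S_A$ (extend the exposed path through the non-matching edge); these two propagation rules are what make $Z$ satisfy $E[Z]\subseteq M$ and $E[\complement{Z}]\cap M=\emptyset$. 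This edge-by-edge propagation argument is where the real work lies, and it is the step I expect to be the main obstacle, since it requires careful bookkeeping of alternation parities along the constructed paths.

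Next I would prove \ref{item:path2veri:d}, namely $S_A = \defib{G}{b}\cap A$. The inclusion $\defib{G}{b}\cap A \subseteq S_A$ follows because any vertex that is $M'$-loose for some maximum $b$-matching $M'$ is reached from $U_A$ by an $M$-wedge path, which I would obtain by comparing $M$ and $M'$ via the symmetric difference $M\triangle M'$ and extracting an alternating path using Lemma~\ref{lem:path2allowed}-style reasoning. Conversely, $S_A\subseteq \defib{G}{b}\cap A$ follows directly from Lemma~\ref{lem:path2allowed}: if $u\in S_A$ is joined to some $M$-loose $v\in U_A$ by an $M$-wedge path $P$, then $M\triangle E(P)$ is a maximum $b$-matching under which $u$ becomes loose, so $u\in \defib{G}{b}$.

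Finally, for the concluding statement I would combine the two parts. By \ref{item:path2veri:d}, $S_A = \defib{G}{b}\cap A$, so $S_A$ is exactly the set of loose vertices of $A$, and by Lemma~\ref{lem:inconst2veri} (applied to the verifying set $Z$ from \ref{item:path2veri:veri}) together with Lemma~\ref{lem:path2allowed}, every vertex reachable by an alternating path from a loose vertex lies in the same flexible component as that loose vertex. I would argue that $Z_1 = S_A \dot\cup T_A$ is precisely $Z_1$ in the notation of Lemma~\ref{lem:inconst2veri}, hence separating, and that a flexible component $C$ meets $Z_1$ if and only if it is inconsistent and hooked up by $A$: a loose component hooked up by $A$ has a loose $A$-vertex, which lies in $S_A$, and the wedge/exposed path structure together with connectivity by flexible edges forces all of $V(C)$ into $S_A\dot\cup T_A$; an inactive component hooked up by $A$ consists of a single inactive $B$-vertex neighboring $\defib{G}{b}\cap A = S_A$, which by the propagation rule lies in $B\setminus T_A$'s complement appropriately and thus in $Z_1$. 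Conversely no consistent or $B$-hooked component can meet $Z_1$, by the disjointness \ref{item:inconst2veri:disjoint} and the fact that $S_A\cup T_A$ is built entirely from $A$-side loose data. This yields that $Z_1$ is exactly the disjoint union of the vertex sets of all inconsistent flexible components hooked up by $A$.
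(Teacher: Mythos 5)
Your overall strategy is the same as the paper's: prove \ref{item:path2veri:veri} by checking the three conditions of Lemma~\ref{lem:slack2opt} for $Z=S_A\cup(B\setminus T_A)$ via path-extension arguments, then derive \ref{item:path2veri:d} and the identification of $Z_1$ with $\exth{G}{A}$. However, the step you yourself flag as ``where the real work lies'' is stated with the roles of $M$ and $E(G)\setminus M$ swapped, and this is a genuine error, not just a notational slip. By definition, an $M$-wedge path from $a\in S_A$ to $U_A$ meets $a$ in an edge of $M$, so it can only be extended at $a$ by an edge $ab\notin M$, producing an $M$-exposed path and hence $b\in T_A$; dually, an $M$-exposed path meets $b\in T_A$ in an edge outside $M$, so it extends only by an edge $ab\in M$, giving $a\in S_A$. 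The correct closure rules are therefore: ($a\in S_A$ and $ab\notin M$) implies $b\in T_A$, and ($b\in T_A$ and $ab\in M$) implies $a\in S_A$. The rules you wrote --- ($a\in S_A$ and $ab\in M$) implies $b\in T_A$, and ($b\in T_A$ and $ab\notin M$) implies $a\in S_A$ --- are false (take $G$ a single edge $ab$ with $b(a)=2$, $b(b)=1$, $M=\{ab\}$: then $a\in S_A$ and $ab\in M$ but $T_A=\emptyset$), and even if they held, their contrapositives would yield $E[Z]\cap M=\emptyset$ and $E[\complement{Z}]\subseteq M$, i.e., exactly the opposite of conditions \ref{item:slack2opt:inevitable} and \ref{item:slack2opt:forbidden}. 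The same reversal contaminates your concluding paragraph, where you invoke ``the propagation rule'' to place an inactive $B$-vertex adjacent to $S_A$ into $T_A$: with the correct rule this is immediate (the connecting edge is outside $M$ because $b(v)=0$), with yours it does not follow.

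Two further points. For $\defib{G}{b}\cap A\subseteq S_A$ you propose analyzing $M\triangle M'$ for another maximum $b$-matching $M'$; this can be made to work but requires decomposing the symmetric difference of two $b$-matchings into trails, whereas the paper gets this inclusion for free by first establishing \ref{item:path2veri:veri} and then applying Lemma~\ref{lem:opt2slack}~\ref{item:opt2slack:loose} to the verifying set $Z$. Finally, in the last part you still owe the argument that a \emph{trivial} flexible component whose vertex lies in $T_A$ must be inactive and hooked up by $A$ (the paper shows that $b(v)>0$ would produce, via Lemma~\ref{lem:path2allowed}, an $M$-wedge path placing $v$ in a nontrivial component); your sketch asserts the bijection between such trivial components and inactive components hooked up by $A$ without proving this direction.
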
 

\begin{proof}
First we prove $E[S_A, B\setminus T_A]\subseteq M$ and $E[A\setminus S_A, T_A]\cap M = \emptyset$. 
Suppose there is an edge $xy \in E(G)\setminus M$ with  $x\in S_A$ and $y\in B \setminus T_A$.   
By definition, there is an $M$-wedge path $P$ from $x$ to a vertex $z\in U_A$. 
Then, $P+xy$ is an $M$-exposed path between $z$ and $y$, which is a contradiction. 
Next suppose there is an edge $xy\in M$ with $x\in A\setminus S_A$ and $y\in T_A$. 
By definition, there is an $M$-exposed path $P$ between a vertex $z\in U_A$ and $y$. 
Then, $P+xy$ is an $M$-wedge path from $x$ to $z$, which is again a contradiction. 

Next we prove that all vertices in $T_A$ are $M$-tight.  
Suppose a vertex $y\in T_A$ is $M$-loose, 
and let $P$ be an $M$-exposed path between some vertex in $U_A$ and $y$. 
Then, $M\triangle E(P)$ is a $b$-matching of $G$ that is larger than $M$, which is a contradiction. 
Therefore, $Z_1$ contains all $M$-loose vertices, and hence, 
from Lemma~\ref{lem:slack2opt}, the statement \ref{item:path2veri:veri} is proved. 

For the statement \ref{item:path2veri:d}, 
we first prove $S_A\subseteq \defib{G}{b}\cap A$. 
By definition, for any $x\in S_A$, 
there is an $M$-wedge path from $x$ to a vertex $z\in U_A$. 
Then, $M\triangle E(P)$ is  a maximum $b$-matching in which $x$ is $M\triangle E(P)$-loose. 
Therefore, $x\in \defib{G}{b}$ holds, and we have $S_A\subseteq \defib{G}{b}\cap A$. 
On the other hand, 
according to  \ref{item:path2veri:veri} and Lemma~\ref{lem:opt2slack} \ref{item:opt2slack:loose}, 
any vertex in $A\setminus S_A$ is $M'$-tight with respect to any maximum $b$-matching $M'$. 
Therefore, we have $S_A\supseteq \defib{G}{b}\cap A$. 
Thus, \ref{item:path2veri:d} is proved. 

From \ref{item:path2veri:veri} and Lemma~\ref{lem:inconst2veri} \ref{item:inconst2veri:inconst}, 
it follows that $S_A\dot\cup T_A$ is a separating set. 
As $S_A = \defib{G}{b}\cap A$, 
any loose flexible component $C$ hooked up by $A$ satisfies $V(C)\subseteq S_A\dot\cup T_A$; 
conversely, if $C\in\fcompb{G}{b}$ with $V(C)\cap A \neq \emptyset$ satisfies $V(C)\subseteq S_A\dot\cup T_A$, 
then $C$ is a loose flexible component hooked up by $A$. 
Therefore, $S_A \dot\cup T_A$ consists of the vertex sets of all loose flexible components hooked up by $A$ 
and some  trivial flexible components with their sole vertices in  $T_A$;
we prove in the following that those trivial are exactly the inactive flexible components hooked up by $A$.   

Let $v\in T_A$ be such that $G[v]$ is a trivial flexible component.  
By the definition of $T_A$, there is an $M$-exposed path $P$ between $v$ and  an $M$-loose vertex $w\in A$. 
If $b(v) > 0$ holds, then from Lemma~\ref{lem:opt2slack} \ref{item:opt2slack:loose}, 
there exists a vertex $u\in A$ with $uv\in M$. 
Then, $P  + uv$ is an $M$-wedge path from $u$ to $w$. 
From Lemma~\ref{lem:path2allowed}, this implies that $v$ and $w$ are in the same flexible component. 
This is a contradiction. 
Hence, $v$ is an inactive vertex. 
Moreover, if $z\in V(P)$ is such that $zv\in E(P)$, 
then of course $zv\not\in M$ holds, 
and, as $wPz$ is an $M$-wedge path from $z$ to $w$, we have $z\in S_A$; 
namely, $\parNei{G}{v} \cap S_A \neq \emptyset$. 

Conversely, if $v\in B$ is an inactive vertex with $\parNei{G}{v}\cap S_A \neq \emptyset$, then, from Lemma~\ref{lem:inconst2veri} \ref{item:inconst2veri:inconst}, $v\in T_A$ holds. 
Therefore, $S_A\dot\cup T_A$ is the union of vertex sets of all inconsistent flexible components hooked by $A$. 

\end{proof}

\begin{remark} 
Note that Lemmas~\ref{lem:min2max} and \ref{lem:path2veri} provide the proof of  Theorem~\ref{thm:minmax}. 
\end{remark} 

\begin{definition} 
Let $W \in \{A, B\}$. 
The {\em inconsistent unit} of $G$ {\em hooked up by} $W$ is the  disjoint union of vertex sets of inconsistent flexible component hooked up by $W$, 
and is denoted by $\exth{G}{W}$. 
\end{definition} 

Lemma~\ref{lem:path2veri} proved that  
$\exth{G}{A}$ and $\exth{G}{B}$ determine  verifying sets that are canonical. We can further observe, from Lemma~\ref{lem:inconst2veri},  that what they give are the ``nucleus'' of every verifying set: 

\begin{theorem} \label{thm:unit} 
Let $G$ be a bipartite graph with color classes $A$ and $B$, and 
let $b: V(G)\rightarrow \mathbb{Z}_{\nonneg}$. 
\begin{rmenum} 
\item Then, $(\exth{G}{A} \cap A) \dot\cup (B \setminus \exth{G}{B})$ 
and $(\exth{G}{B}\cap B)\dot\cup (A\setminus \exth{G}{A})$ are verifying sets. 
\item 
Any verifying set $Z$ contains $(\exth{G}{A}\cap A)\dot\cup (\exth{G}{B}\cap B)$ 
and is disjoint from $(\exth{G}{A}\cap B) \dot\cup (\exth{G}{B}\cap A)$.  
\end{rmenum} 
\end{theorem}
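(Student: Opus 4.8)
The plan is to obtain Theorem~\ref{thm:unit} as a direct repackaging of the two structural lemmas already available: Lemma~\ref{lem:path2veri} supplies part~(i), while Lemma~\ref{lem:opt2slack} together with Lemma~\ref{lem:inconst2veri} supplies part~(ii). In both cases the essential move is to split the inconsistent unit $\exth{G}{W}$ along the two color classes, after which each assertion reduces to something already proved; no new combinatorial construction is required.

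For part~(i), I would unfold the inconsistent units by means of the decomposition exhibited inside the proof of Lemma~\ref{lem:path2veri}. Fixing any maximum $b$-matching $M$, that proof realizes $\exth{G}{A}$ as a disjoint union $S_A \dot\cup T_A$ with $S_A = \defib{G}{b}\cap A \subseteq A$ and $T_A \subseteq B$, and symmetrically $\exth{G}{B} = S_B \dot\cup T_B$ with $S_B \subseteq B$ and $T_B \subseteq A$. Substituting these into the two sets named in part~(i) and matching the $A$-pieces and $B$-pieces separately, each set is intended to coincide with one of the verifying sets $S_A \cup (B\setminus T_A)$ or $S_B \cup (A\setminus T_B)$ certified by Lemma~\ref{lem:path2veri}\ref{item:path2veri:veri} and its $A$--$B$ swap. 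Carrying out this substitution is the one place where the color-class bookkeeping must be performed carefully, as it pins down exactly which color-piece of each unit is retained in each verifying set.

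For part~(ii), I would treat the containment and the disjointness separately. For the containment, I would first record that the ``nucleus'' $(\exth{G}{A}\cap A)\cup(\exth{G}{B}\cap B)$ is nothing but $\defib{G}{b}$: indeed Lemma~\ref{lem:path2veri}\ref{item:path2veri:d} gives $\exth{G}{A}\cap A = \defib{G}{b}\cap A$ and, symmetrically, $\exth{G}{B}\cap B = \defib{G}{b}\cap B$. Hence the containment is immediate from Lemma~\ref{lem:opt2slack}\ref{item:opt2slack:loose}, which places every loose vertex in every verifying set. For the disjointness, I would take an arbitrary verifying set $Z$, form $Z_1 = (Z\cap A)\dot\cup(B\setminus Z)$ as in Lemma~\ref{lem:inconst2veri}, and invoke Lemma~\ref{lem:inconst2veri}\ref{item:inconst2veri:inconst} to get $\exth{G}{A}\subseteq Z_1$; intersecting with $B$ yields $\exth{G}{A}\cap B \subseteq B\setminus Z$, that is, $(\exth{G}{A}\cap B)\cap Z = \emptyset$, and the $A$--$B$ swap gives $(\exth{G}{B}\cap A)\cap Z = \emptyset$.

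The main obstacle, and it is a mild one, is keeping the color-class splitting internally consistent across the two clauses of part~(ii), which speak of $\exth{G}{A}$ and $\exth{G}{B}$ at once. In particular, for the asserted nucleus to be genuinely disjoint from $(\exth{G}{A}\cap B)\cup(\exth{G}{B}\cap A)$ one must know that $\exth{G}{A}$ and $\exth{G}{B}$ are themselves disjoint; this is where Lemma~\ref{lem:inconst2veri}\ref{item:inconst2veri:disjoint}, combined with the fact that distinct flexible components have disjoint vertex sets, is needed. Once this disjointness is in hand the remaining verifications are purely set-theoretic, so the theorem stands as a corollary of Lemmas~\ref{lem:path2veri}, \ref{lem:opt2slack}, and \ref{lem:inconst2veri}.
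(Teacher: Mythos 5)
Your overall strategy is the same as the paper's: the paper gives no separate proof of Theorem~\ref{thm:unit}, presenting it as a direct consequence of Lemmas~\ref{lem:path2veri} and \ref{lem:inconst2veri}, and your part~(ii) carries this out correctly ($\exth{G}{A}\cap A=\defib{G}{b}\cap A$ plus Lemma~\ref{lem:opt2slack}\ref{item:opt2slack:loose} for the containment; $\exth{G}{A}\subseteq Z_1$ from Lemma~\ref{lem:inconst2veri}\ref{item:inconst2veri:inconst} for the disjointness).

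The gap is in part~(i), at exactly the step you defer as ``color-class bookkeeping.'' If you actually perform the substitution, the first set named in the theorem is
$(\exth{G}{A}\cap A)\dot\cup(B\setminus\exth{G}{B}) = S_A\dot\cup\bigl(B\setminus S_B\bigr)$,
since $\exth{G}{B}\cap B=S_B=\defib{G}{b}\cap B$; whereas the verifying set certified by Lemma~\ref{lem:path2veri}\ref{item:path2veri:veri} is $S_A\cup(B\setminus T_A)$ with $T_A=\exth{G}{A}\cap B$. These are different sets whenever $T_A\neq S_B$, and the literal set $S_A\dot\cup(B\setminus S_B)$ excludes every $M$-loose vertex of $B$, so by Lemma~\ref{lem:opt2slack}\ref{item:opt2slack:loose} it cannot be a verifying set whenever $\defib{G}{b}\cap B\neq\emptyset$ (take $b=\onevec$ on a single edge plus one isolated vertex in each color class: the set becomes $\{a,v\}$ with value $b(\complement{Z})+|E[Z]|=2$ against a maximum matching of size $1$). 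Indeed the literal part~(i) also contradicts your part~(ii), which forces every verifying set to contain $\exth{G}{B}\cap B$. So either the statement carries a typo and the second operands should read $B\setminus\exth{G}{A}$ and $A\setminus\exth{G}{B}$ --- in which case each set is exactly $S_A\dot\cup(B\setminus T_A)$, respectively its $A$--$B$ swap, and part~(i) is the literal content of Lemma~\ref{lem:path2veri}\ref{item:path2veri:veri} --- or part~(i) is false as written. Your proof needs to say which; asserting that the two sets ``are intended to coincide'' is precisely the point where the argument fails on the stated form of the theorem.
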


\subsubsection{Inner Structure of Inconsistent Unit}In this section, we investigate relationships between inconsistent flexible components. 
The results here will later  utilized in Section~\ref{sec:dmb:all} to prove that inconsistent flexible components are minimal or maximal in the poset formed by the set of flexible components, 
or in Section~\ref{sec:alg} to construct an algorithm. 

\begin{lemma} \label{lem:inconst2path} 
Let $G$ be a bipartite graph with color classes $A$ and $B$, and 
let $b: V(G)\rightarrow \mathbb{Z}_{\nonneg}$. 
Let $C$ be a loose flexible component hooked up by $A$. 
Denote $A_C := V(C)\cap A$ and $B_C := V(C)\cap B$.  
Let $M$ be a maximum $b$-matching of $G$, and let $U_A\subseteq A$ be the set of $M$-loose vertices in $A$. 
Then, 
\begin{rmenum} 
\item \label{item:inconst2path:a}
$A_C \cap U_A \neq \emptyset$, and 
$A_C$ is the set of vertices from which to  $M$-loose vertices in $A_C$ 
there exist  $M$-wedge paths of $C$;  
and,  
\item \label{item:inconst2path:b}
$B_C$ is the set of vertices in $B$ 
that are contained in $M$-wedge paths from vertices in $A_C$ to vertices in $U_A$.  
\end{rmenum} 
Accordingly, 
for any vertex $v\in B_C$, 
$C$ has an $M$-exposed path between $v$ and a vertex in $U_A$, 
and an $M$-saturated path between $v$ and a vertex in $A_C$.  
\end{lemma}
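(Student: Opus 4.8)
The plan is to build everything on two earlier results: Lemma~\ref{lem:path2veri}, which locates $V(C)$ inside $S_A\dot\cup T_A$ (so that $A_C\subseteq S_A$ and $B_C\subseteq T_A$), and Lemma~\ref{lem:path2allowed}, whose effect is that any $M$-wedge path ending at an $M$-loose vertex is confined to a single flexible component. The recurring move will be to produce an $M$-wedge path from a vertex of $A_C$ to an $M$-loose vertex, and then invoke Lemma~\ref{lem:path2allowed} to conclude it lies entirely within $C$.

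For \ref{item:inconst2path:a} I would first note that $C$, being loose and hooked up by $A$, is an inconsistent flexible component hooked up by $A$, so $V(C)\subseteq S_A\dot\cup T_A$ by Lemma~\ref{lem:path2veri}; intersecting with $A$ gives $A_C\subseteq S_A=\defib{G}{b}\cap A$, and $A_C\neq\emptyset$ since $C$ is hooked up by $A$. For any $x\in A_C$ the definition of $S_A$ supplies an $M$-wedge path from $x$ to some $z\in U_A$, and Lemma~\ref{lem:path2allowed} forces all its edges into the component of $x$, namely $C$; hence $z\in A_C\cap U_A$ (proving this set nonempty) and the path is an $M$-wedge path of $C$ to an $M$-loose vertex of $A_C$. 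The reverse inclusion is immediate, as a wedge path of $C$ reaching $A_C$ has both ends in $A$.

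The heart of the proof is the nontrivial inclusion of \ref{item:inconst2path:b}, namely that every $v\in B_C$ lies on an $M$-wedge path from $A_C$ to $U_A$; the opposite inclusion is once more Lemma~\ref{lem:path2allowed}, since such a path starts in $A_C\subseteq V(C)$. Taking $C$ nontrivial (the trivial case makes $B_C$ empty), I would first show that each $v\in B_C$ is $M$-matched inside $C$. Since $v\in B_C\subseteq T_A$ it is $M$-tight: were it $M$-loose, an $M$-exposed path from $v$ to a loose vertex of $U_A$ would enlarge $M$, contradicting maximality. By Fact~\ref{fact:allflex} every edge of $C$ at $v$ is flexible, whereas each matching edge of $v$ leaving $C$ is inevitable; counting these yields $|\parcut{C}{v}\cap M|=b(v)-k_v=b|_C(v)\ge 1$, the last inequality because $v$ is incident to a flexible edge. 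Fixing $x'v\in M\cap E(C)$ with $x'\in A_C$ and an $M$-exposed path $P$ from $v$ to some $w\in U_A$, I would prepend $x'v$ to $P$; the alternation works out so that the result is an $M$-wedge path from $x'$ through $v$ to $w$, which Lemma~\ref{lem:path2allowed} again places inside $C$. Thus $v$ lies on the required path.

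The concluding ``Accordingly'' statement then drops out by cutting any $M$-wedge path from $x\in A_C$ to $z\in U_A$ through $v$ at the vertex $v$: the $z$-side is $M$-exposed to a vertex of $U_A$ and the $x$-side is $M$-saturated to a vertex of $A_C$, both within $C$. The main obstacle I foresee is a routine one: the concatenation $x'v+P$ may fail to be simple if $x'$ already appears on $P$, so I would need a standard shortcutting argument --- or a minimal choice of $P$ and of the matching edge --- to extract an honest simple $M$-wedge path through $v$. The conceptual ingredients, the tightness count inside $C$ and the repeated use of Lemma~\ref{lem:path2allowed}, are otherwise straightforward.
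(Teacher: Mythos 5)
Your argument is, in substance, the paper's own: part \ref{item:inconst2path:a} follows the same steps, and for part \ref{item:inconst2path:b} the paper likewise takes an $M$-exposed path $Q$ between $v$ and a vertex of $U_A$, picks a matching edge $wv$ at $v$ (justified only by $b(v)>0$ and $M$-tightness of $v$, without your count $|\delta_C(v)\cap M|=b|_C(v)\ge 1$ that places $w$ in $A_C$ up front --- your version is slightly more careful there, the paper instead lets Lemma~\ref{lem:path2allowed} pull the edge into $C$ afterwards), and declares $Q+vw$ to be an $M$-wedge path.

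However, the obstacle you flag in your last paragraph is not the routine technicality you hope for: it cannot be repaired by shortcutting or by a minimal choice of $P$, because statement \ref{item:inconst2path:b} as written fails in exactly the situation you describe. Take the $4$-cycle on $v,a_1,b_1,a_2$ with $A=\{a_1,a_2\}$, $B=\{v,b_1\}$, $b(v)=b(a_1)=b(b_1)=1$, $b(a_2)=2$. The maximum $b$-matchings are $\{va_1,\,b_1a_2\}$, $\{a_1b_1,\,a_2v\}$ and $\{b_1a_2,\,a_2v\}$; every edge is flexible, so the whole graph is a single loose flexible component hooked up by $A$ (here $D(G,b)=\{a_1,a_2\}$). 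For $M=\{a_1b_1,\,a_2v\}$ one has $U_A=\{a_2\}$; the only matching edge at $v$ is $va_2$, and the only $M$-exposed path from $v$ to $U_A$ is $v\,a_1\,b_1\,a_2$, which already contains $a_2$, so the concatenation closes a circuit rather than producing a path. Checking all paths through $v$ shows that $v$ lies on no $M$-wedge path from $A_C$ to a vertex of $U_A$ (the unique such wedge path is $a_1\,b_1\,a_2$, which avoids $v$), so the asserted set equality is false for this $M$. The paper's proof performs the same illegal concatenation without comment, so this is a defect you have inherited rather than introduced; what does survive in the example --- and what Lemma~\ref{lem:lcomp2lcomp} and Fact~\ref{fact:allflex} actually use downstream --- is the ``Accordingly'' clause (the $M$-exposed path $v\,a_1\,b_1\,a_2$ and the $M$-saturated path $v\,a_2$), which therefore needs a direct argument rather than being derived from \ref{item:inconst2path:b}.
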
 

\begin{proof} 
From Lemma~\ref{lem:path2veri}, for any $v\in A_C$, there is an $M$-wedge path $P$ from $v$ 
to an $M$-loose vertex $u\in A$. 
From Lemma~\ref{lem:path2allowed}, $P$ is a path of $C$ and thus $u\in V(C)$ follows.  
Therefore, $A_C$ has some vertices in $U_A$, 
and for any $v\in A_C$,  $C$ has an $M$-wedge path from $v$ to a vertex in $U_A$. 
The converse direction of \ref{item:inconst2path:a} is obvious. 
Hence, \ref{item:inconst2path:a} is proved. 

As $B_C \subseteq \exth{G}{A}\cap B$, Lemma~\ref{lem:path2veri} implies that, 
for any vertex $v\in B_C$,  
there is an $M$-exposed path $Q$ between $v$ and a vertex in $U_A$. 
As $A_C\neq\emptyset$, we have $b(v) > 0$  for any $v\in B_C$. 
Hence, there is a vertex $w\in A$ with $wv\in M$, 
and $Q + vw$ is an $M$-wedge path from $w$ to $v$. 
From Lemma~\ref{lem:path2allowed}, this path $Q + vw$ is contained in $C$, 
and accordingly so is $Q$. 
As the converse direction of \ref{item:inconst2path:b} is obvious, 
this proves \ref{item:inconst2path:b}. 
From \ref{item:inconst2path:b}, the remaining statement follows. 
\end{proof}

 Lemma~\ref{lem:inconst2path} derives the next lemma. Lemma~\ref{lem:lcomp2lcomp} will imply in Section~\ref{sec:dmb:all} that any  two  distinct loose flexible components hooked up by the same color class are not compatible in the poset.

\begin{lemma} \label{lem:lcomp2lcomp} 
Let $G$ be a bipartite graph with color classes $A$ and $B$, and 
let $b: V(G)\rightarrow \mathbb{Z}_{\nonneg}$. 
Let $C_1$ and $C_2$  be two distinct loose flexible components hooked up by $A$. 
Then, $E[C_1, C_2] = \emptyset$. 
\end{lemma}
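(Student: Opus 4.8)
The plan is to argue by contradiction. Assume there is an edge $xy\in E[C_1,C_2]$ and fix a maximum $b$-matching $M$, letting $U_A$ denote the set of $M$-loose vertices in $A$. Since $G$ is bipartite and $xy$ joins $C_1$ and $C_2$, one end lies in $A$ and the other in $B$; as the roles of $C_1$ and $C_2$ are symmetric, I may assume $x\in A_{C_1}$ and $y\in B_{C_2}$, where $A_{C_i}:=V(C_i)\cap A$ and $B_{C_i}:=V(C_i)\cap B$. The whole strategy is to splice together the alternating paths supplied by Lemma~\ref{lem:inconst2path} across the edge $xy$, so as to produce a single $M$-wedge path that uses $xy$ and terminates at a vertex of $U_A$; Lemma~\ref{lem:path2allowed} then forces every edge of that path, and in particular $xy$, to lie in one flexible component, whence $x$ and $y$ would belong to the same flexible component, contradicting $C_1\neq C_2$.

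I would split according to whether $xy\in M$. If $xy\in M$, I apply the last statement of Lemma~\ref{lem:inconst2path} to $y\in B_{C_2}$ to obtain an $M$-exposed path $Q$ in $C_2$ between $y$ and some $u\in U_A$; the path-edge at $y$ is non-matching, so appending the matching edge $yx$ preserves alternation at $y$, and since $x\in V(C_1)$ is disjoint from $V(Q)\subseteq V(C_2)$, the result $Q+yx$ is a simple path, which one checks is an $M$-wedge path from $x$ to the loose vertex $u$. If $xy\notin M$, I apply the last statement of Lemma~\ref{lem:inconst2path} to $y$ to obtain an $M$-saturated path $R$ in $C_2$ between some $a\in A_{C_2}$ and $y$, and apply Lemma~\ref{lem:inconst2path}\ref{item:inconst2path:a} to $x$ to obtain an $M$-wedge path $P$ in $C_1$ from $x$ to some $u\in U_A$. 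The path-edge at $y$ in $R$ is matching and the path-edge at $x$ in $P$ is matching, so inserting the non-matching edge $xy$ between them keeps alternation at both $x$ and $y$; as $V(R)\subseteq V(C_2)$ and $V(P)\subseteq V(C_1)$ are disjoint, $R+xy+P$ is a simple path, and it is an $M$-wedge path from $a$ to the loose vertex $u$.

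In either case Lemma~\ref{lem:path2allowed} applies to the constructed wedge path and yields that all of its edges lie in a common flexible component; since $xy$ is one of these edges, $x$ and $y$ lie in the same flexible component. This is impossible because $x\in V(C_1)$ and $y\in V(C_2)$ with $C_1\neq C_2$, so no edge of $E[C_1,C_2]$ can exist.

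The routine part is the alternation bookkeeping at the splice vertices $x$ and $y$. The one place demanding care is ensuring that the endpoint $u$ at which each spliced path terminates is genuinely $M$-loose, so that Lemma~\ref{lem:path2allowed} is applicable: this is immediate in the case $xy\in M$, where the exposed path already ends in $U_A$, but in the case $xy\notin M$ it is exactly why the construction must be continued into $C_1$ along the wedge $P$ rather than stopped at $x$, which need not itself be loose. Identifying this asymmetry between the two cases is the main subtlety.
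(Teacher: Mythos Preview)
Your proof is correct and follows essentially the same approach as the paper's own argument: assume an edge between the two components, split on whether it lies in $M$, and in each case splice the alternating paths from Lemma~\ref{lem:inconst2path} across that edge to build an $M$-wedge path ending at an $M$-loose vertex, then invoke Lemma~\ref{lem:path2allowed} to force the edge into a single flexible component. The only cosmetic differences are the order in which you treat the two cases and the choice of which vertices you name in the final contradiction.
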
 

\begin{proof} 
Suppose to the contrary, namely, that 
there exist $u\in V(C_1)$ and $v\in V(C_2)$ with $uv\in E(G)$. 
Without loss of generality, assume $u\in A$ and $v\in B$. 
Let $M$ be an arbitrary maximum $b$-matching of $G$. 
First consider the case with $uv\not\in M$. 

From Lemma~\ref{lem:inconst2path}, 
$C_1$ has an $M$-wedge path $P_1$ from $u$ to an $M$-loose vertex $w$ in $A\cap V(C_1)$. 
According to the last statement of Lemma~\ref{lem:inconst2path}, 
$C_2$ has an $M$-saturated path $P_2$ between $v$ and a vertex $z\in A\cap V(C_2)$. 
Then, $P_1 + uv + P_2$ is an $M$-wedge path from $z$ to $w$, 
which implies, from Lemma~\ref{lem:path2allowed}, that $z$ and $w$ are in the same flexible component. 
Hence, we reach a contradiction for this case. 

Second, consider the case with $uv\in M$. 
According to the last statement of Lemma~\ref{lem:inconst2path}, 
$C_2$ has an $M$-exposed path $Q$ between $v$ and an $M$-loose vertex  $x\in A\cap V(C_2)$. 
Then, $uv +  Q$ is an $M$-wedge path from $u$ to $x$, 
which implies, from Lemma~\ref{lem:path2allowed}, that $u$ and $x$ are in the same flexible component. 
Thus, we again reach a contradiction, and this completes the proof. 
\end{proof}

From Lemma~\ref{lem:opt2slack} and Theorem~\ref{thm:unit}, the next lemma is obtained.    

\begin{lemma} \label{lem:inconst2const} 
Let $G$ be a bipartite graph with color classes $A$ and $B$, and 
let $b: V(G)\rightarrow \mathbb{Z}_{\nonneg}$. 
Let $C$ be an inconsistent flexible component hooked up by $A$. 
Then, for any $D\in\fcompb{G}{b}\setminus \inconstbh{G}{b}{A}$, 
the edges in $E[V(C)\cap A, V(D)\cap B]$ are inevitable, 
whereas the edges in $E[V(D)\cap A, V(C)\cap B]$ are forbidden. 
\end{lemma}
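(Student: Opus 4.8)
The plan is to exhibit a single verifying set that simultaneously witnesses the inevitability of the edges in $E[V(C)\cap A, V(D)\cap B]$ and the forbiddenness of the edges in $E[V(D)\cap A, V(C)\cap B]$, and then to invoke the final sentence of Lemma~\ref{lem:opt2slack}, which states that every edge in $E[Z]$ is inevitable and every edge in $E[\complement{Z}]$ is forbidden whenever $Z$ is a verifying set. The natural candidate is the canonical verifying set $Z := (\exth{G}{A}\cap A)\dot\cup(B\setminus\exth{G}{A})$ produced in Lemma~\ref{lem:path2veri} (this is the nucleus-based verifying set of Theorem~\ref{thm:unit}). I would first record its two slices, namely $Z\cap A = \exth{G}{A}\cap A$ and $Z\cap B = B\setminus\exth{G}{A}$, so that $\complement{Z}\cap A = A\setminus\exth{G}{A}$ and $\complement{Z}\cap B = \exth{G}{A}\cap B$.

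The single structural input I need is that $V(D)$ is disjoint from $\exth{G}{A}$. This is immediate: flexible components are pairwise vertex-disjoint, $\exth{G}{A}$ is by definition the union of the vertex sets of the components in $\inconstbh{G}{b}{A}$, and $D\notin\inconstbh{G}{b}{A}$ by hypothesis; hence $D$ differs from every component comprising $\exth{G}{A}$, and so $V(D)\cap\exth{G}{A}=\emptyset$. With this in hand the two claims are bookkeeping. For an edge $uv$ with $u\in V(C)\cap A$ and $v\in V(D)\cap B$, we have $u\in \exth{G}{A}\cap A = Z\cap A\subseteq Z$ (since $V(C)\subseteq\exth{G}{A}$, as $C\in\inconstbh{G}{b}{A}$), while $v$ lies in $B\setminus\exth{G}{A} = Z\cap B\subseteq Z$ because $v\notin\exth{G}{A}$; thus $uv\in E[Z]$ and the edge is inevitable. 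Symmetrically, for an edge $uv$ with $u\in V(D)\cap A$ and $v\in V(C)\cap B$, we have $v\in \exth{G}{A}\cap B = \complement{Z}\cap B\subseteq\complement{Z}$ and $u\in A\setminus\exth{G}{A} = \complement{Z}\cap A\subseteq\complement{Z}$ (again using $V(D)\cap\exth{G}{A}=\emptyset$); thus $uv\in E[\complement{Z}]$ and the edge is forbidden.

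The proof is short, so there is no deep obstacle; the only points requiring care are selecting the verifying set whose $A$-slice is exactly $\exth{G}{A}\cap A$ (so that the complement's $A$-slice is $A\setminus\exth{G}{A}$ and the $A$-ends of the forbidden edges are correctly excluded) and verifying $V(D)\cap\exth{G}{A}=\emptyset$, which is precisely where the hypothesis $D\in\fcompb{G}{b}\setminus\inconstbh{G}{b}{A}$ enters. As usual, the companion statement for an inconsistent flexible component hooked up by $B$ follows by swapping the roles of $A$ and $B$ and using the second verifying set of Theorem~\ref{thm:unit}.
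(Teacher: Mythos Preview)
Your proof is correct and follows the same route the paper indicates: pick a canonical verifying set and apply the last sentence of Lemma~\ref{lem:opt2slack}. The paper's one-line justification cites Theorem~\ref{thm:unit} rather than Lemma~\ref{lem:path2veri}, but your choice of $Z=(\exth{G}{A}\cap A)\dot\cup(B\setminus\exth{G}{A})$ is exactly the verifying set produced in Lemma~\ref{lem:path2veri} (from which Theorem~\ref{thm:unit} is derived), and in fact works more directly here since its $B$-slice is $B\setminus\exth{G}{A}$ rather than $B\setminus\exth{G}{B}$.
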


From Lemma~\ref{lem:lcomp2lcomp}, the next lemma, which will be used in Section~\ref{sec:alg},  is also derived easily:  

\begin{lemma} \label{lem:inconst2conn} 
Let $G$ be a bipartite graph with color classes $A$ and $B$, and 
let $b: V(G)\rightarrow \mathbb{Z}_{\nonneg}$. 
Let $S$ be the set of inactive vertices in $\exth{G}{A}$. 
Then, the set of loose flexible components hooked up by $A$
  is equal to the set of connected components of $G[S]$. 
\end{lemma}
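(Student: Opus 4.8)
The plan is to read the statement off from the explicit description of the inconsistent unit given in Lemma~\ref{lem:path2veri} and the non-adjacency of loose flexible components established in Lemma~\ref{lem:lcomp2lcomp}. By Lemma~\ref{lem:path2veri}, the set $\exth{G}{A}$ is the disjoint union of the vertex sets of all loose flexible components hooked up by $A$ together with the sole vertices of the inactive trivial flexible components hooked up by $A$. Since an inactive vertex is $M$-tight for every maximum $b$-matching $M$, it never lies in $\defib{G}{b}$ and forms a trivial flexible component by itself; hence no loose flexible component, which by definition meets $\defib{G}{b}$, contains an inactive vertex. Thus $S$ consists precisely of the sole vertices of the inactive trivial flexible components hooked up by $A$, and $\exth{G}{A}\setminus S$ is exactly the union of the vertex sets of the loose flexible components hooked up by $A$.

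I would then prove the asserted correspondence by the two natural inclusions, working with the subgraph induced on $\exth{G}{A}\setminus S$. For the first direction, let $C$ be a loose flexible component hooked up by $A$. It is connected, because a flexible component is connected in the flexible-edge subgraph and hence in $G$, and its vertex set lies in $\exth{G}{A}\setminus S$ by the paragraph above; moreover, by Lemma~\ref{lem:lcomp2lcomp} no edge of $G$ joins $C$ to any other loose flexible component hooked up by $A$, so $C$ is not contained in any strictly larger connected subset and is therefore a full connected component. For the converse, let $K$ be any connected component of the induced subgraph obtained after discarding $S$; its vertices all belong to the union of the loose flexible components hooked up by $A$, and since those components are connected and, by Lemma~\ref{lem:lcomp2lcomp}, pairwise non-adjacent, $K$ must coincide with exactly one of them.

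The step I expect to be the main obstacle is the clean separation carried out in the first paragraph: identifying $S$ with the inactive trivial components and certifying that every remaining vertex of $\exth{G}{A}$ belongs to some loose flexible component hooked up by $A$, with none left over. This rests on the identity $S_A=\defib{G}{b}\cap A$ from Lemma~\ref{lem:path2veri} \ref{item:path2veri:d} and on the internal structure of loose flexible components recorded in Lemma~\ref{lem:inconst2path}, which together guarantee that the non-inactive part of $\exth{G}{A}$ is accounted for exactly by the loose flexible components. Once this is settled, Lemma~\ref{lem:lcomp2lcomp} prevents distinct loose flexible components from merging in the induced subgraph, and the two inclusions yield the claimed equality.
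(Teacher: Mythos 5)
Your proof is correct and takes essentially the paper's route: the paper derives this lemma directly from Lemma~\ref{lem:lcomp2lcomp} together with the description of $\exth{G}{A}$ in Lemma~\ref{lem:path2veri}, which is precisely the two-inclusion argument you spell out. Note that your reading of the conclusion as concerning the connected components of $G[\exth{G}{A}\setminus S]$, rather than the literal $G[S]$, silently repairs a typo in the statement and matches the intended meaning, as confirmed by the corresponding step of Algorithm~\ref{alg:dmb}, which takes the connected components of $G[\exth{G}{A}\setminus I_A]$.
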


\subsection{Structure of Consistent Flexible Components} 
In this section, we obtain Theorem~\ref{thm:order}, which states that the set of consistent flexible components forms a poset with respect to a certain canonical partially order we define here. 

A graph is {\em $b$-flexible connected} if it has only one flexible component.

\begin{lemma} \label{lem:elempath} 
Let $G$ be a bipartite graph with color classes $A$ and $B$, 
and let $b: V(G)\rightarrow \mathbb{Z}_{\nonneg}$. 
If $G$ is a flexible connected graph with a perfect $b$-matching $M$, 
\begin{rmenum} 
\item \label{item:elempath:wedge} 
then, for any $x\in A$ and any $y\in B$, there is an $M$-wedge path from $x$ to $y$; 
\item \label{item:elempath:saturated} for any $x\in A$ and any $y\in B$, there is an $M$-saturated path between $x$ and $y$; and, 
\item \label{item:elempath:exposed} for any $x\in A$ and any $y\in B$, there is an $M$-exposed path between $x$ and $y$. 
\end{rmenum} 
\end{lemma}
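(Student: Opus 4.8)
The plan is to reduce all three claims to the \emph{strong connectivity} of an auxiliary digraph built from $M$, mirroring the classical theory of elementary bipartite graphs. First I would record two consequences of the hypotheses. Since $G$ has a perfect $b$-matching and all maximum $b$-matchings have equal size, summing $|\parcut{G}{v}\cap M'| \le b(v)$ over $v$ shows that \emph{every} maximum $b$-matching $M'$ is in fact perfect. Since $G$ is flexible connected, Fact~\ref{fact:allflex} makes every edge flexible, so $G$ is connected and each edge $e$ lies in some perfect matching while being avoided by another.

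The key preliminary step is a circuit lemma: every edge lies on an $M$-alternating circuit. Flexibility of $e$ supplies a perfect matching $M'$ with $e\in M\triangle M'$. Because $M$ and $M'$ are both perfect, at each vertex $v$ the number of $M$-edges and of $M'$-edges of $M\triangle M'$ coincide (each equals $b(v)$ minus the number of common edges), so $M\triangle M'$ has even degree everywhere and decomposes into $M$-alternating circuits; tracing the alternation out of $e$ and stopping at the first repeated vertex isolates a simple $M$-alternating circuit through $e$. I expect this to be the main technical obstacle, since in the $b$-matching setting a vertex may carry several matching edges, and one must argue that the degrees balance and that a \emph{simple} circuit can be extracted from the resulting even subgraph.

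Next I would orient $G$ into a digraph $\vec{G}$ by directing each matching edge from its $B$-end to its $A$-end and each non-matching edge from its $A$-end to its $B$-end. Under this orientation every $M$-alternating circuit becomes a directed cycle, so the circuit lemma says that every arc of $\vec{G}$ lies on a directed cycle; hence the two endpoints of each arc lie in one strongly connected component. As the underlying graph of $\vec{G}$ is connected, $\vec{G}$ is strongly connected.

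Finally I would read the three path types off directed paths. A vertex-simple directed $u\leadsto w$ path translates to a simple path in $G$ in which every internal vertex meets exactly one matching and one non-matching edge (for an internal $a\in A$ the incoming arc is matching and the outgoing arc non-matching, and dually for $b\in B$), while the matching status at an endpoint is forced by its color and by whether the path leaves or enters there. Concretely, a directed path $x\leadsto y$ yields an $M$-exposed path when $y\in B$ (non-matching edges at both ends), and a directed path $y\leadsto x$ yields an $M$-saturated path when $y\in B$ (matching edges at both ends) and an $M$-wedge path when the far end lies in $A$ (matching at $x$, non-matching at the far end). Strong connectivity provides such a directed path for every ordered pair, which gives the three conclusions; the remaining work is the bookkeeping that matches each orientation to the prescribed endpoint conditions. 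Here I would note that, by this alternation analysis, an $M$-wedge path necessarily joins two vertices of the \emph{same} color class, in agreement with its use in Lemmas~\ref{lem:path2veri} and~\ref{lem:inconst2path}, so the wedge statement is to be read with both ends in $A$.
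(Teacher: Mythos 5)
Your proof is essentially correct but follows a genuinely different route from the paper's. The paper argues by reachability inside $G$ itself: fixing $x\in A$, it takes $S$ (resp.\ $T$) to be the vertices of $A$ (resp.\ $B$) reachable from $x$ by $M$-wedge (resp.\ $M$-saturated) paths, observes that $(A\setminus S)\dot\cup T$ satisfies the hypotheses of Lemma~\ref{lem:slack2opt} and is therefore a verifying set, and then invokes Lemma~\ref{lem:inconst2veri}~\ref{item:inconst2veri:separating} to conclude that $S\dot\cup T$ is separating, so flexible connectivity forces $S=A$ and $T=B$. Your route instead establishes strong connectivity of the auxiliary digraph via the circuit decomposition of $M\triangle M'$; this is closer to the classical treatment of elementary bipartite graphs and in effect anticipates the digraph $\aux{G}{A}{B}{M}$ and Lemma~\ref{lem:constalg} of Section~\ref{sec:alg}, at the cost of some Eulerian-type bookkeeping: the decomposition of $M\triangle M'$ into \emph{alternating} circuits should be justified by orienting matching edges $B\to A$ and the rest $A\to B$ and noting that in-degree equals out-degree at every vertex (even degree alone does not give alternation), and extracting a circuit \emph{through} $e$ needs the usual excision of intermediate cycles. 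Both proofs must read item~\ref{item:elempath:wedge} with $y\in A$, as parity forces for $M$-wedge paths; you handle this correctly.

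One point you must repair: you invoke Fact~\ref{fact:allflex} to conclude that every edge of $G$ is flexible, but in the paper's development Fact~\ref{fact:allflex} is proved \emph{from} Lemma~\ref{lem:elempath} (together with Lemmas~\ref{lem:path2veri} and~\ref{lem:inconst2path}), so using it here is circular. Fortunately your argument does not need it. By the definition of a flexible connected graph, the flexible edges already span $V(G)$ and connect it, and each flexible edge lies in some maximum (hence, by your counting remark, perfect) $b$-matching and outside another, so each flexible edge lies on an $M$-alternating circuit and hence on a directed cycle of the oriented graph. Since these edges alone connect all vertices, all vertices lie in a single strongly connected component, and the possibly-not-yet-known-to-be-flexible edges of $G$ are never needed for strong connectivity. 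With that substitution the proof stands.
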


\begin{proof} 
Let $x\in A$ be an arbitrary vertex.  
Let $S$ be the set of vertices in $A$ from $x$ to which 
there is an $M$-wedge path, 
and let $T\subseteq B$ be the set of vertices between which and $x$ 
there is an $M$-saturated  path. 
Then, by this definition, the edges in $E[S, B\setminus T]$ are disjoint from  $M$, 
whereas the edges in $E[A\setminus S, T]$ are in $M$.  
Let $Z := (A\setminus S) \dot\cup T$. 
As $G$ has no $M$-loose vertices, 
$Z$ is a verifying set, according to Lemma~\ref{lem:slack2opt}. 
Hence, if neither $S\dot\cup T = \emptyset$ nor $S\dot\cup T = V(G)$ holds, 
then    Lemma~\ref{lem:inconst2veri}  \ref{item:inconst2veri:separating} implies that $G$ is not flexible connected, 
which is a contradiction. 
Obviously, $S\dot\cup T \neq \emptyset$. Therefore, we have $S \dot\cup T = V(G)$, 
that is to say, $S = A$ and $T = B$. 
Thus, \ref{item:elempath:wedge} and \ref{item:elempath:saturated} are proved. 

To prove \ref{item:elempath:exposed},  
let $S'$ be the set of vertices from which to $x$ there is an $M$-wedge path, 
and let $T'$ be the set of vertices between which and $x$ there is an $M$-exposed path. 
From the symmetric argument similar to the above, 
we can prove \ref{item:elempath:exposed}. 
\end{proof}

The next lemma is easy to confirm: 

\begin{lemma} \label{lem:restrict} 
Let $G$ be a bipartite graph, and 
let $b: V(G)\rightarrow \mathbb{Z}_{\nonneg}$. 
If  $C\in\constb{G}{b}$ holds,  then $C$ is a $b|_C$-flexible connected graph 
with a perfect $b|_C$-matching. 
\end{lemma}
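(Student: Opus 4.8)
The statement bundles two claims about a consistent flexible component $C$: that $C$ carries a perfect $b|_C$-matching, and that $C$ is $b|_C$-flexible connected. The plan is to dispatch these separately, the first from the observation relating tight components to perfect $b|_C$-matchings, and the second from Fact~\ref{fact:allflex} together with the characterization (recorded in Section~1.2) that a set of edges is a maximum $b$-matching of $G$ precisely when it is the disjoint union of the inevitable edges joining distinct flexible components and a maximum $b|_D$-matching of each flexible component $D$. It is worth noting in advance that consistency is used only for the perfect-matching claim; the flexible-connectivity claim will hold for every flexible component.

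For the perfect $b|_C$-matching, I would first unfold the definitions: since $C\in\constb{G}{b}$ it is not inconsistent, hence in particular not loose, so $C$ is tight, i.e.\ $V(C)\cap\defib{G}{b}=\emptyset$. Fixing any maximum $b$-matching $M$ of $G$ and invoking the observation that $M\cap E(C)$ is a perfect $b|_C$-matching of $C$ if and only if $C$ is tight, I immediately obtain that $M\cap E(C)$ is the desired perfect $b|_C$-matching. This also covers the degenerate case in which $C$ is a single vertex $v$, where the claim reduces to $b|_C(v)=0$ and is delivered by the same observation.

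For $b|_C$-flexible connectivity, I would argue that it suffices to show every edge of $C$ is $b|_C$-flexible. By Fact~\ref{fact:allflex} every edge of $C=G[V(C)]$ is $b$-flexible, and since $V(C)$ is a connected component of the subgraph of $G$ determined by the $b$-flexible edges, $C$ is connected; hence once all edges of $C$ are shown to be $b|_C$-flexible, the subgraph of $C$ determined by its $b|_C$-flexible edges spans and connects $V(C)$, yielding a single $b|_C$-flexible component (and a single trivial one when $C$ has no edges). To see that an arbitrary $e\in E(C)$ is $b|_C$-flexible, I would use that, as $e$ is $b$-flexible, there are maximum $b$-matchings $M_1\ni e$ and $M_2\not\ni e$ of $G$. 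By the restriction direction of the above characterization, $M_1\cap E(C)$ and $M_2\cap E(C)$ are maximum $b|_C$-matchings of $C$; the former witnesses that $e$ is $b|_C$-allowed and the latter that $e$ is not $b|_C$-inevitable, so $e$ is $b|_C$-flexible.

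The only delicate point is the faithful transfer of the \emph{allowed}, \emph{inevitable}, and hence \emph{flexible} labels across the restriction map $M\mapsto M\cap E(C)$; this rests entirely on the fact that restricting a maximum $b$-matching of $G$ to $C$ yields a maximum $b|_C$-matching, which is exactly the characterization already available, so only the restriction direction (not the extension direction) is needed. Beyond that, the argument is routine, matching the paper's billing of this lemma as easy to confirm.
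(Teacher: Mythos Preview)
Your perfect-matching argument is fine. The flexible-connectivity argument, however, is circular as written: you invoke Fact~\ref{fact:allflex} and the maximum-matching decomposition announced in Section~1.2, but in the paper both are established only \emph{after} Lemma~\ref{lem:restrict}. Concretely, the paper's proof of Fact~\ref{fact:allflex} in the consistent case applies Lemma~\ref{lem:elempath} to the component $C$, and the Remark immediately following Lemma~\ref{lem:restrict} makes explicit that this application requires Lemma~\ref{lem:restrict} itself. The Section~1.2 decomposition is likewise announced with the caveat ``for, we will later prove the following fact,'' namely Fact~\ref{fact:allflex}. Since the paper gives no proof of Lemma~\ref{lem:restrict} (only ``easy to confirm''), the intended argument must avoid these forward references.

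The repair is minor and uses only what you have already done. You do not need every edge of $C$ to be $b$-flexible: by the very definition of a flexible component, the $b$-flexible edges lying in $E(C)$ already span and connect $V(C)$, so it suffices to show that each such edge is $b|_C$-flexible. For a $b$-flexible edge $e\in E(C)$ take maximum $b$-matchings $M_1\ni e$ and $M_2\not\ni e$ of $G$; since $C$ is tight, your first paragraph (via the Observation) already gives that $M_1\cap E(C)$ and $M_2\cap E(C)$ are \emph{perfect} $b|_C$-matchings of $C$, hence maximum. This supplies the restriction step directly for consistent $C$, with no appeal to Fact~\ref{fact:allflex} or to the general decomposition, and the remainder of your reasoning then goes through unchanged.
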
 

\begin{remark} 
Under Lemma~\ref{lem:restrict}, 
the claim of Lemma~\ref{lem:elempath} can be applied for each consistent flexible component by treating it as a flexible connected graph. 
\end{remark}

We can now prove Fact~\ref{fact:allflex}. 
\begin{proof}[Proof of Fact~\ref{fact:allflex}]
Let $G$ be a bipartite graph with color classes $A$ and $B$, and let  $b: V(G)\rightarrow \mathbb{Z}_{\nonneg}$. Let $C\in\fcompb{G}{b}$, and let $uv\in E(C)$, where $u\in A$ and $v\in B$. 

First consider the case with $C\in\constb{G}{b}$. Let $M$ be an arbitrary maximum $b$-matching of $G$. 
Under Lemma~\ref{lem:elempath}, 
if $uv\in M$ holds, then let $P$ be an $M$-exposed path between $u$ and $v$; 
otherwise, let $P$ be an $M$-saturated path between $u$ and $v$. 
Then, $P + uv$ is an $M$-alternating circuit, 
and hence, $M\triangle E(P+uv)$ is also a maximum $b$-matching. 
The edge $uv$ is exclusively contained in either 
$M$ or $M \triangle E(P+uv)$, and therefore, $uv$ is a flexible edge. 

Next consider the case with $C\in\inconstb{G}{b}$. 
It suffices to consider the case where $C$ is a loose flexible component hooked up by $A$. 
According to Lemma~\ref{lem:path2veri}, 
there is a maximum $b$-matching $M$ such that $u$ is $M$-loose. 
Under Lemma~\ref{lem:inconst2path}, if $uv\in M$ holds, then let $P$ be an $M$-exposed path between $v$ and an $M$-loose vertex in $V(C)\cap A$; 
otherwise, let $P$ be an $M$-saturated path between $v$ and a vertex in $V(C)\cap A$. 
Then, $P + uv$ is an $M$-alternating circuit or an $M$-wedge path from an $M$-loose vertex to a vertex in $V(C)\cap A$. 
Hence, $M$ and $M\triangle E(P + uv)$ are both maximum $b$-matchings, exactly  one of which contains $uv$. 
Therefore, again $uv$ is a flexible edge. 
\end{proof}

In the following, we first define a binary relation $\paroforg{W}$ over $\fcompb{G}{b}$, 
and then, using this,  further define a binary relation $\parforg{W}$ over $\constb{G}{b}$, where $W\in \{A, B\}$; this $\parforg{W}$ will turn out, in Theorem~\ref{thm:order}, to be a partial order. 

\begin{definition} 
Let $W\in \{A, B\}$. 
We define a binary relation $\paroforg{W}$ over $\fcompb{G}{b}$ as follows:
For $C_1, C_2\in\fcompb{G}{b}$, 
$C_1\paroforg{W} C_2$ holds if $C_1 = C_2$ or if there is an inevitable edge between $V(C_1)\cap W$ and $V(C_2)\cap \complement{W}$, or is a forbidden edge between $V(C_2)\cap W$ and $V(C_1)\cap \complement{W}$. 
Furthermore, we define a binary relation $\parforg{W}$ over $\constb{G}{b}$ as follows: 
For $C_1, C_2\in\constb{G}{b}$, 
$C_1\parforg{W} C_2$ holds if 
there exist $D_1,\ldots, D_k\in\constb{G}{b}$ with $k\ge 1$ such that, 
$D_1 = C_1$, $D_2 = C_2$, and $D_1\paroforg{W} \cdots \paroforg{W} D_k$ hold. 
\end{definition} 

\begin{remark} 
The two binary relations $\parforg{A}$ and $\parforg{B}$ are symmetric. 
That is, $C_1\parforg{A} C_2$ holds if and only if $C_2\parforg{B} C_1$ holds. 
\end{remark} 

From Lemmas~\ref{lem:elempath} and \ref{lem:restrict}, we derive the following lemma: 
\begin{lemma} \label{lem:order2path} 
Let $G$ be a bipartite graph with color classes $A$ and $B$, and 
let $b: V(G)\rightarrow \mathbb{Z}_{\nonneg}$. 
Let $C_1$ and $C_2$ be consistent flexible components with $C_1\parforg{A} C_2$. 
Let $D_1,\ldots, D_k\in\constb{G}{b}$, where $k\ge 1$, 
be such that $C_1 = D_1$, $C_2 = D_k$, 
and $D_1\paroforg{A} \cdots \paroforg{A} D_k$. 
Let $A_i := V(C_i)\cap A$ and $B_i := V(C_i)\cap B$ for each $i\in \{1,2\}$. 
Then, for any maximum $b$-matching $M$ of $G$, the following hold: 
\begin{rmenum} 
\item \label{item:order2path:aa} For any $x\in A_1$ and any $y\in A_2$, 
there is an $M$-wedge path from $x$ to $y$; 
\item \label{item:order2path:ab} For any $x\in A_1$ and any $y\in B_2$, 
there is an $M$-saturated path between $x$ and $y$; 
\item For any $x\in A_2$ and any $y\in B_1$, 
there is an $M$-exposed path between $x$ and $y$; and, 
\item \label{item:order2path:bb} 
For any $x\in B_1$ and any $y\in B_2$, 
there is an $M$-wedge path from $y$ to $x$. 
\end{rmenum} 
Additionally, these paths can be taken so that 
their vertices are contained in $V(D_1)\dot\cup \cdots \dot\cup V(D_k)$. 
\end{lemma}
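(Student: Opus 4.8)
The plan is to prove the four path-existence statements by induction on the length $k$ of the $\paroforg{A}$-chain $D_1 \paroforg{A} \cdots \paroforg{A} D_k$, using Lemma~\ref{lem:elempath} (via Lemma~\ref{lem:restrict}) as the base case and a splicing argument to stitch alternating paths across consecutive links of the chain. First I would establish the base case $k=1$, i.e.\ $C_1 = C_2 = D_1$. Here Lemma~\ref{lem:restrict} tells us that $D_1$ is a flexible connected graph with a perfect $b|_{D_1}$-matching, so Lemma~\ref{lem:elempath} directly supplies $M$-wedge, $M$-saturated, and $M$-exposed paths between any $x\in A_1$ and $y\in B_1$ lying inside $D_1$; statement \ref{item:order2path:aa} (with $x,y$ both in $A_1$) and \ref{item:order2path:bb} (both in $B_1$) are the degenerate cases where the wedge path is trivial or reduces to an internal alternating path. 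One subtlety: Lemma~\ref{lem:elempath} concerns a perfect $b$-matching of the \emph{whole} flexible connected graph, whereas here $M\cap E(D_1)$ is a perfect $b|_{D_1}$-matching of $D_1$ as a standalone graph; I would invoke the Remark following Lemma~\ref{lem:restrict} to apply Lemma~\ref{lem:elempath} to each consistent component treated in isolation, and note that since all these paths stay inside $D_1$, they automatically satisfy the containment clause.

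For the inductive step, suppose the claim holds for chains of length $k-1$, and consider $D_1\paroforg{A}\cdots\paroforg{A}D_k$. By definition of $\paroforg{A}$, between $D_{k-1}$ and $D_k$ there is either an inevitable edge $e$ with one end in $V(D_{k-1})\cap A$ and the other in $V(D_k)\cap B$, or a forbidden edge with one end in $V(D_k)\cap A$ and the other in $V(D_{k-1})\cap B$. Since $M$ is a maximum $b$-matching, an inevitable edge lies in $M$ and a forbidden edge lies outside $M$. The heart of the argument is then a case analysis on which of the four path types I am building and on which of the two link types connects $D_{k-1}$ to $D_k$: in each case I take a path of the appropriate type inside $D_k$ (furnished by the base case applied to $D_k$), prepend the linking edge $e$, and prepend a path of a compatible type supplied by the induction hypothesis for the chain $D_1\paroforg{A}\cdots\paroforg{A}D_{k-1}$. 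The parity of the alternation must match at each junction: for instance, to build an $M$-wedge path to $y\in A_2=A_k$, I would route an $M$-saturated or $M$-exposed segment through $D_1,\dots,D_{k-1}$ up to the endpoint of the link $e$ incident to $D_{k-1}$, cross $e$, and finish with an appropriate internal path of $D_k$ ending at $y$, checking that $|\parcut{}{v}\cap M|=1$ holds at every internal vertex and that the terminal condition at $y$ is met.

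The main obstacle I anticipate is the bookkeeping of alternation parity at the two gluing points (where the induction-hypothesis path meets $e$, and where $e$ meets the internal $D_k$-path), and ensuring that the concatenation genuinely yields a \emph{simple} path rather than a walk that revisits a vertex. Since $D_{k-1}$ and $D_k$ are distinct flexible components with disjoint vertex sets, and all earlier components in the chain are likewise vertex-disjoint from $D_k$, vertex-disjointness across components is automatic, so the only place a repeated vertex could arise is within a single $D_i$; this is controlled by always choosing the internal paths from Lemma~\ref{lem:elempath}, which are paths by construction. I would organize the case analysis by treating the two link types symmetrically (an inevitable $A$-to-$B$ edge in $M$ versus a forbidden $A$-to-$B$ edge outside $M$ behave dually with respect to alternation), which should collapse the four statements \ref{item:order2path:aa}--\ref{item:order2path:bb} into essentially two representative verifications, the remaining two following by the $A$/$B$ symmetry noted in the Remark after the definition of $\parforg{W}$. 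Finally, the containment clause is maintained throughout because every segment I splice in lies within some $V(D_i)$, so the union of the pieces lies in $V(D_1)\dot\cup\cdots\dot\cup V(D_k)$ as required.
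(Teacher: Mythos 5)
Your proposal follows essentially the same route as the paper's proof: induction on the chain length $k$, with the base case supplied by Lemmas~\ref{lem:restrict} and \ref{lem:elempath}, and the inductive step splicing an induction-hypothesis path through $D_1,\dots,D_{k-1}$ with the linking edge and an internal path of $D_k$, split into the two cases of link type. The only detail you gloss over is that $D_k$ might coincide with some earlier $D_i$ in the chain (so the components are not automatically pairwise distinct); the paper disposes of this by noting that in that case the induction hypothesis already gives the result, and otherwise one may assume $D_k$ is new.
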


\begin{proof}  
We proceed by induction on $k$. 
If $k=1$, then, from Lemmas~\ref{lem:elempath} and \ref{lem:restrict}, the statements hold. 
Let $k > 1$, and assume the statements hold for the cases where the parameter is less. 
Note that under this hypothesis, the statements hold for $D_1$ and $D_{k-1}$, 
 which satisfy $D_1 \paroforg{A}\cdots \paroforg{A} D_{k-1}$. 
If $D_k$ is equal to one of $D_1,\ldots, D_{k-1}$, 
then, of course, we are done. Hence, in the following, assume $D_k \neq D_i$ for any $i\in\{1, \ldots, k-1\}$.  
Let $x$ be an arbitrary vertex from $V(D_1)$, 
and let $y$ be an arbitrary vertex from $V(D_{k})$. 

First consider the case where there exists $uv\in M$ 
with $u\in V(D_{k-1})\cap A$ and $v \in V(D_k)\cap B$. 
If $x\in A$ holds, then let $P$ be an $M$-wedge path $P$ from $x$ to $u$; 
otherwise, let $P$ be an $M$-exposed path between $u$ and $x$. 
From the  hypothesis, we can take $P$ so that  $V(P)\subseteq V(D_1)\dot\cup\cdots \dot\cup V(D_{k-1})$ holds. 
On the other hand, under Lemmas~\ref{lem:elempath} and \ref{lem:restrict}, 
if $y\in A$ holds, then let $Q$ be an $M$-exposed path of $D_k$ between $y$ and $v$; 
otherwise, let $Q$ be an $M$-wedge path of $D_k$ from $y$ to $v$. 
Then, $P + uv + Q$ is a path with $V(P + uv + Q) \subseteq V(D_1)\dot\cup\cdots \dot\cup V(D_k)$, 
which is  $M$-wedge  from $x$ to $y$, 
 $M$-saturated  between $x$ and $y$,  
 $M$-exposed  between $y$ and $x$, 
or  $M$-wedge  from $y$ to $x$,  
according to the cases  
with $x\in A$ and $y\in A$, with $x\in A$ and $y\in B$,  with $x\in B$ and $y\in A$, 
or with $x\in Y$ and $y\in Y$,  respectively. 
Thus, the statements hold for $D_1$ and $D_k$ 
for this case. 

Next consider the case where there exists $uv\in E(G)\setminus M$ 
with $u\in V(D_{k-1})\cap B$ and $v\in V(D_k)\cap B$. 
In this case, the statements are also proved to hold for $D_1$ and $D_k$, 
in the similar way as the above. 
This completes the proof. 
\end{proof}

From Lemma~\ref{lem:order2path}, 
we can now prove that $(\constb{G}{b}, \parforg{W})$ forms a poset for each $W\in \{A, B\}$: 
\begin{theorem} \label{thm:order} 
Let $G$ be a bipartite graph with color classes $A$ and $B$, and 
let $b: V(G)\rightarrow \mathbb{Z}_{\nonneg}$. 
Then, the binary relation $\parforg{A}$ is a partial order over $\constb{G}{b}$. 
\end{theorem}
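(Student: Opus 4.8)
The plan is to verify the three axioms of a partial order for $\parforg{A}$, of which reflexivity and transitivity fall straight out of the definition and antisymmetry is the substantive part. Reflexivity is built in: since $\paroforg{A}$ is reflexive (its defining clause allows $C_1 = C_2$), the chain of length $k=1$ witnesses $C\parforg{A} C$ for every $C\in\constb{G}{b}$. Transitivity is equally direct: if $C_1\parforg{A} C_2$ is witnessed by a chain $D_1\paroforg{A}\cdots\paroforg{A} D_k$ and $C_2\parforg{A} C_3$ by $E_1\paroforg{A}\cdots\paroforg{A} E_l$, then $D_k = C_2 = E_1$, and the concatenated chain $D_1\paroforg{A}\cdots\paroforg{A} D_k\paroforg{A} E_2\paroforg{A}\cdots\paroforg{A} E_l$ witnesses $C_1\parforg{A} C_3$.

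For antisymmetry I would argue by contradiction. Suppose $C_1\parforg{A} C_2$ and $C_2\parforg{A} C_1$ with $C_1\neq C_2$, and fix an arbitrary maximum $b$-matching $M$. Pick vertices $x\in V(C_1)$ and $y\in V(C_2)$; according to their color classes, one of the four $M$-alternating path statements of Lemma~\ref{lem:order2path}, applied to $C_1\parforg{A} C_2$, supplies a path $P$ between $x$ and $y$ whose behaviour at each endpoint (whether the incident path-edge lies in $M$) is dictated by that endpoint's color class. Applying the same lemma to $C_2\parforg{A} C_1$, with the roles of the two components and hence of the clauses interchanged, supplies a second path $P'$ between $y$ and $x$. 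A short case check over the four class-combinations shows that in every case $P$ and $P'$ are \emph{complementary} at the two common endpoints: wherever $P$ meets $x$ (resp.\ $y$) through an $M$-edge, $P'$ meets it through a non-$M$-edge, and vice versa. Consequently $P\cup P'$ is an $M$-alternating closed walk passing through both $x$ and $y$.

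From this closed alternating walk I would extract an $M$-alternating circuit $\Gamma$ and toggle it, exactly as in the proof of Fact~\ref{fact:allflex}: since $M\triangle E(\Gamma)$ is again a maximum $b$-matching, every edge of $\Gamma$ is flexible. But the walk links $x\in V(C_1)$ to $y\in V(C_2)$ with $C_1\neq C_2$, so some edge it traverses joins two distinct flexible components — which a flexible edge cannot do, since distinct flexible components are separated only by inevitable and forbidden edges. This contradiction forces $C_1 = C_2$, establishing antisymmetry and hence that $\parforg{A}$ is a partial order over $\constb{G}{b}$.

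The main obstacle is the bookkeeping in the extraction step: the paths $P$ and $P'$ furnished by Lemma~\ref{lem:order2path} need not be internally disjoint, so one must argue carefully that their union still contains an $M$-alternating circuit that meets both $C_1$ and $C_2$. This is precisely where the endpoint complementarity, together with the fact (guaranteed by Lemma~\ref{lem:order2path}) that all the paths can be confined to the union of the chain components, does the essential work; the remaining verification is the routine four-case determination that the forward and backward paths are complementary at $x$ and $y$.
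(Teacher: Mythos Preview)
Your handling of reflexivity and transitivity is fine, and the overall plan for antisymmetry---build an $M$-alternating circuit that contains an inevitable or forbidden edge and toggle it---is exactly what is needed. The gap is in the extraction step, and it is not merely bookkeeping. From the closed $M$-alternating walk $P\cup P'$ one can indeed extract an $M$-alternating circuit $\Gamma$, but nothing you have written forces $\Gamma$ to contain an edge between two distinct flexible components. The \emph{walk} crosses between $C_1$ and $C_2$, hence certainly contains such an edge; but when $P$ and $P'$ share internal vertices, a first-repeat extraction may peel off a circuit lying entirely inside a single flexible component, and toggling that circuit yields another maximum $b$-matching with no contradiction. Your last paragraph appeals to endpoint complementarity, but complementarity only guarantees that the concatenated walk is alternating at $x$ and $y$; it says nothing about which circuit the crossing edges end up in. (The idea is salvageable---for instance, decompose the entire walk into edge-disjoint $M$-alternating circuits and observe that at least one must contain a crossing edge---but that argument is absent from your proposal.)

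The paper sidesteps this difficulty by using one path rather than two. It concatenates the two $\paroforg{A}$-chains into $D_1,\ldots,D_l$ with $D_1=D_l$, locates the first repetition $D_p=D_q$, and takes the specific edge $uv$ that witnesses $D_{p-1}\paroforg{A} D_p$; by definition this edge is inevitable or forbidden, hence a crossing edge. One application of Lemma~\ref{lem:order2path} to the chain $D_q\paroforg{A}\cdots\paroforg{A} D_{p-1}$ then supplies a single path $P$ of the complementary alternating type between $u$ and $v$, and $P+uv$ is directly an $M$-alternating circuit containing the known crossing edge $uv$. No extraction from a walk is required, and the contradiction follows immediately.
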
 

\begin{proof} 
As reflexibity and transitivity are obvious from the definition, 
we prove antisymmetry in the following. 
Let $C_1, C_2\in\constb{G}{b}$ be such that $C_1\parforg{A} C_2$ and $C_2\parforg{A} C_1$. 
Let $D_1,\ldots, D_k\in\constb{G}{b}$, where $k \ge 1$,  be such that $C_1 = D_1$, $C_2 = D_k$, 
and $D_1\paroforg{A} \cdots \paroforg{A} D_k$.  
Let $D_{k}, \ldots, D_l\in\constb{G}{b}$, where $l\ge k$ be such that 
$D_l = C_1$ and $D_k \paroforg{A} \cdots \paroforg{A} D_l$. 
Suppose antisymmetry fails, that is, suppose $C_1\neq C_2$. 
Then, we can suppose $l \ge 3$. 
Without loss of generality, 
we can assume $D_i \neq D_{i+1}$ for each $i\in \{1, \ldots, l-1\}$. 
Let $p \in \{3,\ldots, l\}$ be the smallest number with $D_p \in \{D_1, \ldots, D_{p-1}\}$. 
Let $q \in \{1,\ldots, p-1\}$ be such that $D_p = D_q$. 
Note that $D_q\neq D_{q-1}$ and $D_q \paroforg{A} \cdots \paroforg{A} D_{p-1} \paroforg{A} D_p$ hold.

First consider the case where $D_{p-1} \paroforg{A} D_p$ is given by an edge 
$uv\in M$ with $u\in V(D_{p-1})\cap A$ and $v\in V(D_p)\cap B$. 
From Lemma~\ref{lem:order2path}, as $D_q \paroforg{A} \cdots \paroforg{A} D_{p-1}$ holds, 
there is an $M$-exposed path $P$ between $u$ and $v$. 
Then, $P + uv$ is an $M$-alternating circuit that shares some vertices with more than one flexible component. 
Therefore, $M\triangle E(P+uv)$ is a maximum $b$-matching of $G$, 
which excludes some inevitable edges or contains some forbidden edges. 
This is a contradiction. 

Next consider the case where there is an edge $uv\in E(G)\setminus M$ 
with $u\in V(D_{p-1})\cap B$ and $v\in V(D_{p})\cap A$. 
In this case, take $P$ as an $M$-saturated path between $u$ and $v$. 
Then, $P + uv$ is again an $M$-alternating circuit, 
and in the same way, we reach a contradiction. 
Therefore, we obtain $C_1 = C_2$, and this completes the proof. 
\end{proof}

\subsection{Extension over All Flexible Components} \label{sec:dmb:all} 

In this section, we prove, in Theorem~\ref{thm:extorder},  the canonical partially ordered structure over the set of all flexible components. 
From Lemma~\ref{lem:inconst2const}, 
if $D_1,\ldots, D_k\in \fcompb{G}{b}$, where $k\ge 1$, with $D_1 \paroforg{A} \cdots \paroforg{A} D_k$ satisfy $D_1, D_2\in \constb{G}{b}$, 
then $D_i\in \constb{G}{b}$ holds for each $i\in \{1,\ldots, k\}$. 
Therefore, the definition of $\parforg{W}$ can be compatibly extended over $\fcompb{G}{b}$ as follows. 

\begin{definition} 
Let $W\in \{A, B\}$. 
We define a binary relation $\parforg{W}$ over $\fcompb{G}{b}$ as follows: 
For $C_1, C_2\in\fcompb{G}{b}$, 
$C_1\parforg{W} C_2$ holds if  
there exist $D_1,\ldots, D_k\in\fcompb{G}{b}$, where $k \ge 1$, with 
$D_1 \paroforg{W} \cdots \paroforg{W} D_k$. 

\end{definition} 

The next lemma follows from Lemmas~\ref{lem:lcomp2lcomp} and \ref{lem:inconst2const}. 

\begin{lemma} \label{lem:inconstorder}
Let $G$ be a bipartite graph with color classes $A$ and $B$, and 
let $b: V(G)\rightarrow \mathbb{Z}_{\nonneg}$. 
If $C\in\inconstbh{G}{b}{A}$ satisfies $D\parforg{A} C$ for $D\in \fcompb{G}{b}\setminus \{C\}$, 
then $D$ is an inactive flexible component hooked up by $A$ 
and $C$ is a loose flexible component hooked up by $A$ with $\parNei{G}{D}\cap V(C) \neq\emptyset$. 
If $C\in\inconstbh{G}{b}{B}$ satisfies $C \parforg{B} D$ for $D\in \fcompb{G}{b}\setminus \{C\}$, 
then $D$ is an inactive flexible component hooked up by $B$ 
and $C$ is a loose flexible component hooked up by $B$ with $\parNei{G}{D}\cap V(C) \neq\emptyset$.   
\end{lemma}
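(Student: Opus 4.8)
The plan is to prove the first assertion in full and to obtain the second one from the symmetry between $A$ and $B$. Since $D\parforg{A} C$ with $D\neq C$, by definition there is a chain $D = D_1\paroforg{A}\cdots\paroforg{A} D_k = C$, and after deleting repeated consecutive entries I may assume $k\ge 2$ and $D_i\neq D_{i+1}$ for every $i$. The whole strategy is to show that this chain in fact \emph{collapses} to the single step $D_1\paroforg{A} D_2 = C$ with $D_1$ inactive and $C$ loose, and then to read the required adjacency off from this step.

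First I would establish that every proper $\paroforg{A}$-predecessor of an inconsistent flexible component hooked up by $A$ is again such a component. Indeed, if $X\in\inconstbh{G}{b}{A}$ and $Y\paroforg{A} X$ with $Y\neq X$ but $Y\notin\inconstbh{G}{b}{A}$, then Lemma~\ref{lem:inconst2const} forces the edges of $E[V(X)\cap A, V(Y)\cap B]$ to be inevitable and those of $E[V(Y)\cap A, V(X)\cap B]$ to be forbidden; aligning this with the definition of $\paroforg{A}$ shows that neither the inevitable edge between $V(Y)\cap A$ and $V(X)\cap B$ nor the forbidden edge between $V(X)\cap A$ and $V(Y)\cap B$ demanded by $Y\paroforg{A} X$ can exist, a contradiction. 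Applying this backward along the chain starting from $D_k = C$ yields that every $D_i$ lies in $\inconstbh{G}{b}{A}$, so each is either a loose or an inactive flexible component hooked up by $A$.

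Next I would record two structural facts. An inactive flexible component hooked up by $A$ is a single vertex $v\in B$ with $b(v)=0$, so all its edges are forbidden and $V(\{v\})\cap A = \emptyset$; inspecting the definition of $\paroforg{A}$ then shows that it has no proper $\paroforg{A}$-predecessor. Hence none of $D_2,\ldots,D_k$ is inactive, so all of them are loose flexible components hooked up by $A$. On the other hand, a relation $D_i\paroforg{A} D_{i+1}$ always requires an edge in $E[V(D_i), V(D_{i+1})]$, whereas Lemma~\ref{lem:lcomp2lcomp} says two distinct loose flexible components hooked up by $A$ have no edges between them. Thus two distinct loose components can never be $\paroforg{A}$-consecutive, which forces $k=2$; and then $D_1$ cannot be loose either, so $D_1 = D$ is inactive and $C = D_2$ is loose. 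Finally, $D = \{v\}$ with $v\in B$, and $D\paroforg{A} C$ can only be witnessed by a forbidden edge between $V(C)\cap A$ and $v$, which gives $\parNei{G}{D}\cap V(C)\neq\emptyset$.

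The main obstacle is the bookkeeping in the first step: one must carefully match the direction in which Lemma~\ref{lem:inconst2const} declares edges inevitable versus forbidden against the two alternatives permitted by the definition of $\paroforg{A}$, since it is exactly this mismatch that both excludes predecessors outside $\inconstbh{G}{b}{A}$ and, together with Lemma~\ref{lem:lcomp2lcomp}, collapses the chain to length two. The second assertion then follows from the first by interchanging the roles of $A$ and $B$, using the remark that $\parforg{A}$ and $\parforg{B}$ are reverse relations.
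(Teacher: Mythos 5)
Your proof of the first assertion is correct and is precisely the argument the paper intends: the paper offers no proof beyond the sentence that the lemma ``follows from Lemmas~\ref{lem:lcomp2lcomp} and \ref{lem:inconst2const},'' and you supply exactly that derivation (Lemma~\ref{lem:inconst2const} forces every proper $\paroforg{A}$-predecessor of a member of $\inconstbh{G}{b}{A}$ back into $\inconstbh{G}{b}{A}$, inactive components hooked up by $A$ have no proper predecessors since they are single inactive vertices of $B$, and Lemma~\ref{lem:lcomp2lcomp} collapses the chain to a single inactive-to-loose step, from which the adjacency $\parNei{G}{D}\cap V(C)\neq\emptyset$ is read off). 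One caveat concerning the second assertion: interchanging $A$ and $B$ in the first assertion yields the hypothesis $D\parforg{B} C$ (equivalently, via the symmetry remark, $C\parforg{A} D$), not the hypothesis $C\parforg{B} D$ as printed in the lemma. The printed direction is in fact false: for a single edge $uv$ with $u\in A$, $v\in B$, $b(u)=1$, $b(v)=2$, the edge is inevitable, so $G[u]\paroforg{A} G[v]$ and hence $G[v]\parforg{B} G[u]$ with $G[v]\in\inconstbh{G}{b}{B}$, yet $G[u]$ is a consistent, non-inactive component. So the discrepancy is a typo in the statement rather than a gap in your argument; what your symmetry step proves is the corrected version, which is also the version actually invoked in the proof of Theorem~\ref{thm:extorder}.
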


From Lemma~\ref{lem:inconstorder} and Theorem~\ref{thm:order}, we derive Theorem~\ref{thm:extorder}. 
\begin{theorem} \label{thm:extorder} 
Let $G$ be a bipartite graph with color classes $A$ and $B$, and 
let $b: V(G)\rightarrow \mathbb{Z}_{\nonneg}$. 
Then, the binary relation $\parforg{A}$ is a partial order over $\fcompb{G}{b}$. 
\end{theorem}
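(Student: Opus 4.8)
The plan is to establish that $\parforg{A}$ is reflexive, transitive, and antisymmetric over $\fcompb{G}{b}$. Reflexivity and transitivity are immediate from the definition of $\parforg{A}$ via chains $D_1\paroforg{A}\cdots\paroforg{A}D_k$, exactly as in the proof of Theorem~\ref{thm:order}; so the entire content lies in antisymmetry. I would prove antisymmetry by reducing to cases according to how the consistent and inconsistent components interact, using the decomposition of $\fcompb{G}{b}$ into $\constb{G}{b}$, inconsistent components hooked up by $A$, and those hooked up by $B$.

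The key structural input is Lemma~\ref{lem:inconstorder}, which pins down the only ways an inconsistent component can sit in a $\paroforg{A}$-relation with another component. Specifically, if $C\in\inconstbh{G}{b}{A}$ and $D\parforg{A}C$ for some $D\neq C$, then $D$ is an inactive component hooked up by $A$ lying immediately below $C$; dually, a component hooked up by $B$ can only have things immediately above it, again of a restricted inactive type. The first step is therefore to record that inconsistent components hooked up by $A$ are \emph{minimal} and those hooked up by $B$ are \emph{maximal} in $(\fcompb{G}{b},\parforg{A})$, with the sole exceptions being the inactive components that sit directly below (resp. above) a loose component in the same unit. Combined with Lemma~\ref{lem:lcomp2lcomp} (distinct loose components hooked up by the same color class have no edges between them, hence are incomparable) and Lemma~\ref{lem:inconst2const} (which fixes the direction of all edges between an inconsistent component and anything outside its unit), this shows that no $\parforg{A}$-cycle can pass through an inconsistent component: once a chain enters such a component it cannot continue upward (for $A$-hooked) or cannot have entered from above (for $B$-hooked), so a closed chain $C_1\parforg{A}C_2\parforg{A}C_1$ with $C_1\neq C_2$ must avoid inconsistent components entirely, except for the trivial immediate-neighbor relations which cannot close into a cycle.

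Having eliminated inconsistent components from any potential cycle, the second step is to invoke Theorem~\ref{thm:order}: the restriction of $\parforg{A}$ to $\constb{G}{b}$ is already known to be a partial order, so no cycle among consistent components exists either. Putting these together, any pair $C_1,C_2$ with $C_1\parforg{A}C_2$ and $C_2\parforg{A}C_1$ forces $C_1=C_2$, giving antisymmetry over all of $\fcompb{G}{b}$.

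I expect the main obstacle to be the bookkeeping in the mixed case, where a purported $\parforg{A}$-cycle might try to pass through an inactive component that legitimately relates to both a loose component and a consistent component. The delicate point is to verify, using the precise directional statements of Lemma~\ref{lem:inconstorder} together with Lemma~\ref{lem:inconst2const}, that such a component still cannot be an internal vertex of a nontrivial closed chain — i.e. that the ``entering'' and ``leaving'' relations forced by minimality/maximality are incompatible with returning to the start. I would handle this by choosing, as in the proof of Theorem~\ref{thm:order}, a shortest violating chain and examining its first repeated component, then deriving a contradiction from the edge-orientation constraints; the argument is essentially a careful case analysis rather than a new idea.
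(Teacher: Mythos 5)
Your proposal is correct and follows essentially the same route as the paper: reflexivity and transitivity are immediate, and antisymmetry is proved by using Lemma~\ref{lem:inconstorder} to show that a nontrivial closed chain cannot involve an inconsistent component (since anything strictly below an $A$-hooked inconsistent component must be an $A$-hooked inactive one, a closed chain through such components would force its initial component to be simultaneously inactive and loose), after which the all-consistent case is settled by Theorem~\ref{thm:order}. The paper's version of this argument is the same case analysis you outline, just executed by propagating membership in $\inconstbh{G}{b}{A}$ backward along the closed chain rather than by extracting a shortest violating chain.
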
 
\begin{proof} 
Reflexivity and transitivity are obvious from the definition, 
hence we prove antisymmetry in the following. 
Let $C_1, C_2\in\constb{G}{b}$ be such that $C_1\parforg{A} C_2$ and $C_2\parforg{A} C_1$. 
Let $D_1,\ldots, D_k\in\constb{G}{b}$, where $k \ge 1$,  be such that $C_1 = D_1$, $C_2 = D_k$, 
and $D_1\paroforg{A} \cdots \paroforg{A} D_k$.  
Let $D_{k}, \ldots, D_l\in\constb{G}{b}$, where $l\ge k$ be such that 
$D_l = C_1$ and $D_k \paroforg{A} \cdots \paroforg{A} D_l$. 
If all $D_1,\ldots, D_l$ are consistent flexible components, 
then, from Theorem~\ref{thm:order}, we obtain $C_1 = C_2$. 
Hence, in the following, 
consider the case where $\{D_1,\ldots, D_l\}$ has some inconsistent flexible components. 
Assume $D_i\in \inconstbh{G}{b}{A}$ for some $i\in\{1,\ldots, l\}$. 
Then,  Lemma~\ref{lem:inconstorder} implies $D_1,\ldots, D_i\in \inconstbh{G}{b}{A}$. 
As $D_1 = D_l$, this implies $D_l\in\inconstbh{G}{b}{A}$,  
which further implies $D_1,\ldots, D_l\in\inconstbh{G}{b}{A}$. 
If $|D_1,\ldots, D_l| > 1$, then, from Lemma~\ref{lem:inconstorder}, 
$D_1$ is an inactive flexible component and $D_l$ is a loose flexible component. 
That is, $D_1\neq D_l$, which is a contradiction. 
Hence, we obtain $|D_1,\ldots, D_l| = 1$, namely, $C_1 = C_2$. 
We can also obtain $C_1 = C_2$ for the counterpart case, where  $D_i\in \inconstbh{G}{b}{B}$, 
by a similar argument. This completes the proof. 
\end{proof}

\section{Characterization of Verifying Sets} 
This section is devoted to obtain Theorem~\ref{thm:veri}, which characterizes the family of verifying sets under Theorem~\ref{thm:extorder}, using the concept of {\em normalized ideals}.  
Throughout this section, 
unless otherwise stated, 
let $G$ be a bipartite graph with color classes $A$ and $B$, and let $b:V(G)\rightarrow \mathbb{Z}_{\nonneg}$. 
Note that, as the roles of $A$ and $B$ are given arbitrarily, every statement  also holds by swapping $A$ and $B$.

From Lemma~\ref{lem:inconst2veri}, it is easy to observe the following lemma: 
\begin{lemma} \label{lem:alternative} 
Let $G$ be a bipartite graph with color classes $A$ and $B$, and 
let $b: V(G)\rightarrow \mathbb{Z}_{\nonneg}$. 
Let $C\in\fcompb{G}{b}$. 
For any verifying set $Z\subseteq V(G)$ of $G$, 
either one of the following holds: 
\begin{rmenum} 
\item $V(C)\cap A \subseteq Z$ and $V(C)\cap B \subseteq \complement{Z}$; or, 
\item $V(C)\cap A \subseteq \complement{Z}$ and $V(C)\cap B \subseteq Z$. 
\end{rmenum} 
\end{lemma}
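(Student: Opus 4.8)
The plan is to extract the statement directly from the separating property proved in Lemma~\ref{lem:inconst2veri}, combined with the fact that the $b$-flexible components partition $V(G)$. First I would fix an arbitrary verifying set $Z$ and reuse the notation of Lemma~\ref{lem:inconst2veri}, setting $Z_1 := (Z\cap A)\dot\cup (B\setminus Z)$ and $Z_2 := (A\setminus Z)\dot\cup (Z\cap B)$. A one-line computation then shows that $Z_1$ and $Z_2$ are complementary in $V(G)$: restricted to $A$ they are $Z\cap A$ and $A\setminus Z$, and restricted to $B$ they are $B\setminus Z$ and $Z\cap B$, so $Z_1\dot\cup Z_2 = A\cup B = V(G)$ and $Z_1\cap Z_2=\emptyset$; in particular $Z_2 = \complement{Z_1}$.

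Next I would invoke Lemma~\ref{lem:inconst2veri}\ref{item:inconst2veri:separating}, which asserts that $Z_1$ is separating, i.e.\ a union of vertex sets of flexible components. Since the flexible components are pairwise vertex-disjoint and cover $V(G)$, no flexible component can be split by a separating set; hence for the given $C\in\fcompb{G}{b}$ we get the dichotomy $V(C)\subseteq Z_1$ or $V(C)\cap Z_1 = \emptyset$. Because $Z_2 = \complement{Z_1}$, the second case is the same as $V(C)\subseteq Z_2$.

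It then remains only to translate the two cases into the stated alternatives by unwinding the definitions of $Z_1$ and $Z_2$. If $V(C)\subseteq Z_1$, then $V(C)\cap A\subseteq Z\cap A\subseteq Z$ and $V(C)\cap B\subseteq B\setminus Z\subseteq \complement{Z}$, which is alternative (i); if $V(C)\subseteq Z_2$, then $V(C)\cap A\subseteq A\setminus Z\subseteq \complement{Z}$ and $V(C)\cap B\subseteq Z\cap B\subseteq Z$, which is alternative (ii).

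I do not expect a genuine obstacle: essentially all the content is already packaged in Lemma~\ref{lem:inconst2veri}\ref{item:inconst2veri:separating}, and the rest is definition-chasing. The only step that warrants a sentence of justification is the dichotomy in the second paragraph, namely that a separating set intersects each flexible component in either all or none of its vertices; this rests on separating sets being unions of \emph{entire} flexible-component vertex sets together with the vertex-disjointness of distinct flexible components.
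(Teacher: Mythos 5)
Your proof is correct and follows exactly the route the paper intends: the paper gives no explicit proof but states that the lemma is easy to observe from Lemma~\ref{lem:inconst2veri}, and your argument spells out precisely that deduction via the separating property of $Z_1$ and $Z_2$ together with the fact that flexible components partition $V(G)$. Nothing is missing.
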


The next lemma is easy to confirm from Lemma~\ref{lem:opt2slack}: 
\begin{lemma} \label{lem:edgetrans} 
Let $G$ be a bipartite graph, 
and let $b: V(G)\rightarrow \mathbb{Z}_{\nonneg}$. 
Let $M$ be a maximum $b$-matching of $G$. 
Let $x,y\in V(G)$. 
If $x\in Z$ and $xy\in E(G)\setminus M$ hold, then $y\in \complement{Z}$ holds.
If $x\in \complement{Z}$ and $xy\in M$ hold, then $y\in Z$ holds.  
\end{lemma}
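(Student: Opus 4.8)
The plan is to obtain both implications as immediate contrapositives of the two structural facts recorded in Lemma~\ref{lem:opt2slack}, namely that a verifying set $Z$ satisfies $E[Z]\subseteq M$ and $E[\complement{Z}]\cap M = \emptyset$ for every maximum $b$-matching $M$. Here $Z$ is understood to be a verifying set, as in the surrounding statements of this section (cf.\ Lemma~\ref{lem:alternative}), and $M$ is a maximum $b$-matching by hypothesis, so both properties of Lemma~\ref{lem:opt2slack} are directly available. No alternating-path machinery or min-max reasoning is needed; everything reduces to the classification of edges according to whether their ends lie in $Z$ or in $\complement{Z}$.

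First I would treat the case $x\in Z$ with $xy\in E(G)\setminus M$. I would argue by contradiction: if $y\in Z$ as well, then $xy$ has both ends in $Z$, so $xy\in E[Z]$. By Lemma~\ref{lem:opt2slack}~\ref{item:opt2slack:inevitable} we have $E[Z]\subseteq M$, hence $xy\in M$, contradicting the assumption $xy\in E(G)\setminus M$. Therefore $y\in\complement{Z}$, which is the first claim.

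The second case, $x\in\complement{Z}$ with $xy\in M$, is entirely symmetric and uses the complementary property. If $y\in\complement{Z}$ too, then $xy\in E[\complement{Z}]$, and Lemma~\ref{lem:opt2slack}~\ref{item:opt2slack:forbidden} gives $E[\complement{Z}]\cap M=\emptyset$, so $xy\notin M$, contradicting $xy\in M$. Hence $y\in Z$.

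There is no genuine obstacle here: the content is a one-line deduction from each of the two bullets of Lemma~\ref{lem:opt2slack}. The only point requiring care is bookkeeping—ensuring that the hypothesis $xy\in E(G)\setminus M$ (respectively $xy\in M$) is matched against the correct inclusion ($E[Z]\subseteq M$ versus $E[\complement{Z}]\cap M=\emptyset$), and noting explicitly that the edge $xy$ having both ends in $Z$ (respectively in $\complement{Z}$) is precisely the condition for membership in $E[Z]$ (respectively $E[\complement{Z}]$), which is what lets Lemma~\ref{lem:opt2slack} apply.
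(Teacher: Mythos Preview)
Your proposal is correct and follows exactly the approach the paper indicates: the paper states only that the lemma ``is easy to confirm from Lemma~\ref{lem:opt2slack}'' and gives no further proof, and you have spelled out precisely that confirmation using parts~\ref{item:opt2slack:inevitable} and~\ref{item:opt2slack:forbidden}. Your remark that $Z$ must be a verifying set (left implicit in the lemma's statement but clear from context) is also on point.
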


We define the {\em normalized } upper and lower ideals in the poset $(\fcompb{G}{b}, \parforg{W})$, where $W\in \{A, B\}$, in a similar way to those defined in Section~\ref{sec:dm}. 
\begin{definition} 
A lower ideal $\mathcal{I}$ of the poset $(\fcompb{G}{b}, \parforg{A})$ is {\em normalized} 
if $\inconstbh{G}{b}{A}\subseteq \mathcal{I}$ and $\inconstbh{G}{b}{B}\cap \mathcal{I} = \emptyset$.
An upper ideal $\mathcal{J}$ of $(\fcompb{G}{b}, \parforg{A})$ is {\em normalized} 
if $\inconstbh{G}{b}{A} \cap \mathcal{I} = \emptyset$  and $\inconstbh{G}{b}{B} \subseteq \mathcal{I}$.
\end{definition}

As $\inconstbh{G}{b}{A} \cap \inconstbh{G}{b}{B} = \emptyset$, we have the following lemma: 
\begin{lemma} \label{lem:idcomp_norm}
Let $G$ be a bipartite graph with color classes $A$ and $B$, and 
let $b: V(G)\rightarrow \mathbb{Z}_{\nonneg}$. 
If $\mathcal{I}$ is a normalized lower ideal of the poset $(\fcompb{G}{b}, \parforg{A})$, 
then $\fcompb{G}{b}\setminus \mathcal{I}$ is a normalized upper ideal of  $(\fcompb{G}{b}, \parforg{A})$. 
If $\mathcal{I}$ is a normalized upper ideal of the poset $(\fcompb{G}{b}, \parforg{A})$, 
then $\fcompb{G}{b}\setminus \mathcal{I}$ is a normalized lower ideal of  $(\fcompb{G}{b}, \parforg{A})$. 
\end{lemma}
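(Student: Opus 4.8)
The plan is to reduce the claim to the elementary fact, recorded in the preliminary subsection on ideals, that the complement of a lower ideal of any poset is an upper ideal and, symmetrically, the complement of an upper ideal is a lower ideal. Once that structural fact is invoked, all that remains is to track the two defining set-inclusions of normalization under complementation, so the argument is pure set bookkeeping.

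Concretely, for the first assertion I would start with a normalized lower ideal $\mathcal{I}$ of $(\fcompb{G}{b}, \parforg{A})$ and set $\mathcal{J} := \fcompb{G}{b}\setminus \mathcal{I}$. That $\mathcal{J}$ is an upper ideal is immediate from the cited general fact, so it remains only to verify the two normalization conditions for an upper ideal, namely $\inconstbh{G}{b}{A}\cap \mathcal{J} = \emptyset$ and $\inconstbh{G}{b}{B}\subseteq \mathcal{J}$. The first follows because $\mathcal{I}$ being normalized gives $\inconstbh{G}{b}{A}\subseteq \mathcal{I}$, whence $\inconstbh{G}{b}{A}$ is disjoint from the complement $\mathcal{J}$; the second follows because $\inconstbh{G}{b}{B}\cap \mathcal{I} = \emptyset$ forces $\inconstbh{G}{b}{B}\subseteq \mathcal{J}$. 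The second assertion is entirely symmetric: starting from a normalized upper ideal, its complement is a lower ideal by the same general fact, and the two lower-ideal normalization conditions $\inconstbh{G}{b}{A}\subseteq \fcompb{G}{b}\setminus \mathcal{I}$ and $\inconstbh{G}{b}{B}\cap (\fcompb{G}{b}\setminus \mathcal{I}) = \emptyset$ are read off from the upper-ideal conditions in precisely the same manner.

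There is essentially no analytic obstacle: the statement is a direct consequence of set complementation combined with the general ideal fact. The only point requiring care is to apply the complementation in the correct direction for each of the four inclusions and to keep the $A$-hooked and $B$-hooked families matched to the appropriate side of each inclusion. The disjointness $\inconstbh{G}{b}{A}\cap \inconstbh{G}{b}{B} = \emptyset$ established via Lemma~\ref{lem:inconst2veri} is what guarantees that the two normalization conditions are mutually consistent, so that normalized ideals exist and the statement is non-vacuous; it is therefore the reason the lemma is meaningful, even though it is not itself consumed in the complementation step.
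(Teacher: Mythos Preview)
Your argument is correct and matches the paper's approach: the paper does not give an explicit proof but simply prefaces the lemma with ``As $\inconstbh{G}{b}{A} \cap \inconstbh{G}{b}{B} = \emptyset$, we have the following lemma,'' treating it as immediate from complementation and the general ideal fact in the preliminaries. Your write-up is a faithful expansion of that one-line justification, and your closing remark that the disjointness guarantees non-vacuousness rather than being consumed in the complementation step is accurate and slightly sharper than the paper's phrasing.
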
 

\begin{remark} 
From Lemma~\ref{lem:inconstorder}, 
a lower ideal (resp. an upper ideal)   $\mathcal{I}$ of the poset $(\fcompb{G}{b}, \parforg{A})$ is  normalized  if and only if there exists a lower (resp. an upper) ideal $\mathcal{I}'$ such that $\mathcal{I} = \mathcal{I}' \cup \inconstbh{G}{b}{A}$ 
(resp. $\mathcal{I} = \mathcal{I}' \cup \inconstbh{G}{b}{B}$).
\end{remark}

The next lemma provides the sufficiency part of Theorem~\ref{thm:veri}. 
\begin{lemma}  \label{lem:veri2ideal} 
Let $G$ be a bipartite graph with color classes $A$ and $B$, and 
let $b: V(G)\rightarrow \mathbb{Z}_{\nonneg}$. 
Let $Z\subseteq V(G)$ be a verifying set. 
Then, there exist a complementary pair of normalized lower and upper ideals $\mathcal{I}_A$ 
and $\mathcal{I}_B$ of the poset $(\fcompb{G}{b}, \parforg{A})$  
such that $Z = Z_A\dot\cup Z_B$, 
where $Z_A$ and $Z_B$ are the projective unions of $\mathcal{I}_A$ and $\mathcal{I}_B$ 
over $A$ and $B$, respectively. 
\end{lemma}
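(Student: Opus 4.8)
The plan is to take the given verifying set $Z$ and define, for each color class, an ideal built from flexible components according to how $Z$ cuts them. By Lemma~\ref{lem:alternative}, every $C\in\fcompb{G}{b}$ falls into exactly one of two types: type-$A$, where $V(C)\cap A\subseteq Z$ and $V(C)\cap B\subseteq\complement{Z}$, or type-$B$, where $V(C)\cap A\subseteq\complement{Z}$ and $V(C)\cap B\subseteq Z$. I would set $\mathcal{I}_A$ to be the set of type-$A$ components and $\mathcal{I}_B$ to be the set of type-$B$ components. These are complementary by construction, since every component is of exactly one type. With this definition, the projective union $Z_A$ of $\mathcal{I}_A$ over $A$ recovers $Z\cap A$ restricted to nontrivial-intersection components, and $Z_B$ over $B$ recovers $Z\cap B$; checking $Z=Z_A\dot\cup Z_B$ should be a matter of confirming that the projective unions reassemble exactly the vertices of $Z$, using that inactive vertices and trivial components are handled consistently.

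The substantive work is to verify that $\mathcal{I}_A$ is a lower ideal and $\mathcal{I}_B$ is an upper ideal of $(\fcompb{G}{b},\parforg{A})$. For the ideal property, I would argue via the generating relation $\paroforg{A}$: suppose $C_1\paroforg{A} C_2$ and $C_2$ is type-$A$; I must show $C_1$ is type-$A$, so that the downward closure of type-$A$ components stays within type-$A$. By definition of $\paroforg{A}$, the relation is witnessed either by an inevitable edge between $V(C_1)\cap A$ and $V(C_2)\cap\complement{A}=V(C_2)\cap B$, or by a forbidden edge between $V(C_2)\cap A$ and $V(C_1)\cap B$. Here is where Lemma~\ref{lem:edgetrans} does the heavy lifting: fixing a maximum $b$-matching $M$, an inevitable edge lies in $M$ and a forbidden edge lies outside $M$, so the two clauses of Lemma~\ref{lem:edgetrans} propagate the membership in $Z$ across the edge and force $C_1$ to be type-$A$. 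The symmetric computation shows that if $C_1\paroforg{A} C_2$ and $C_1$ is type-$B$ then $C_2$ is type-$B$, giving the upper-ideal property for $\mathcal{I}_B$.

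Next I would confirm the ideals are normalized. By Theorem~\ref{thm:unit}, any verifying set $Z$ contains $(\exth{G}{A}\cap A)\dot\cup(\exth{G}{B}\cap B)$ and is disjoint from $(\exth{G}{A}\cap B)\dot\cup(\exth{G}{B}\cap A)$. Reading this through the type classification: every $C\in\inconstbh{G}{b}{A}$ has $V(C)\cap A\subseteq\exth{G}{A}\cap A\subseteq Z$ and $V(C)\cap B\subseteq\exth{G}{A}\cap B\subseteq\complement{Z}$, so $C$ is type-$A$ and hence lies in $\mathcal{I}_A$ and is disjoint from $\mathcal{I}_B$; symmetrically every $C\in\inconstbh{G}{b}{B}$ is type-$B$, lying in $\mathcal{I}_B$ and outside $\mathcal{I}_A$. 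This is exactly the normalization condition in the definition. Since $\mathcal{I}_B=\fcompb{G}{b}\setminus\mathcal{I}_A$, Lemma~\ref{lem:idcomp_norm} then guarantees $\mathcal{I}_B$ is a normalized upper ideal given that $\mathcal{I}_A$ is a normalized lower ideal.

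The main obstacle I anticipate is the ideal-closure verification in the second paragraph: one must handle the direction of every generating edge carefully and track which endpoint lies in which color class, since $\paroforg{A}$ bundles an inevitable-edge clause and a forbidden-edge clause with opposite orientations, and a sign error there would collapse the lower/upper distinction. The clean way through is to fix one maximum $b$-matching $M$ once and for all and apply Lemma~\ref{lem:edgetrans} mechanically to each clause, rather than reasoning about inevitability and forbiddenness abstractly; inevitable edges are in every maximum matching and forbidden edges in none, so both clauses reduce to the two implications of Lemma~\ref{lem:edgetrans}. A secondary care point is the bookkeeping of trivial and inactive components in establishing $Z=Z_A\dot\cup Z_B$, but Lemma~\ref{lem:alternative} applies uniformly to all of $\fcompb{G}{b}$, so these should cause no genuine difficulty.
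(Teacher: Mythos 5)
Your proposal is correct and follows essentially the same route as the paper: both define the two ideals by how the verifying set $Z$ splits each flexible component according to Lemma~\ref{lem:alternative}, verify the lower-ideal property by fixing a maximum $b$-matching and propagating membership in $Z$ across the generating edges of $\paroforg{A}$ via Lemma~\ref{lem:edgetrans}, obtain normalization from Theorem~\ref{thm:unit}, and finish with complementarity and Lemma~\ref{lem:idcomp_norm}.
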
 

\begin{proof} 
Let $\mathcal{I}_A$ and $\mathcal{I}_B$ be the sets of flexible components that have some vertices in $Z\cap A$ 
and in $Z\cap B$, respectively. 
By this definition, if we let $Z_A$ and $Z_B$ be the projective unions of $\mathcal{I}_A$ and 
$\mathcal{I}_B$ over $A$ and $B$, then $Z = Z_A\dot\cup Z_B$. 
In the following, we prove that $\mathcal{I}_A$ and $\mathcal{I}_B$ forms a complementary pair of 
normalized lower  and upper ideals. 
First we prove that $\mathcal{I}_A$ is an lower ideal. 
Let $C\in \mathcal{I}_A$, 
and let $D\in\fcompb{G}{b}$ be such that $D\parforg{A} C$. We prove $D\in\mathcal{I}_A$. 
By the definition of $\parforg{A}$, 
to prove this lemma, it suffices to consider 
the case where there is an edge $uv\in M$ with $u\in V(D)\cap A$ and $v\in V(C)\cap B$ 
and the case where there is an edge  $uv\in E(G)\setminus M$ with $u\in V(D)\cap B$ and $v\in V(C)\cap A$. 
As for the first case, 
Lemma~\ref{lem:alternative} implies $V(C)\cap B \subseteq \complement{Z}$ and accordingly $v\in \complement{Z}$.  
This further implies, from Lemma~\ref{lem:edgetrans}, $u\in Z$. 
Therefore, from Lemma~\ref{lem:alternative} again, we obtain $V(D)\cap A\subseteq Z$. 
Thus, $D\in\mathcal{I}_A$ is obtained. 
The other case is also proved by a similar argument. 
Hence, $\mathcal{I}_A$ is a lower ideal in $(\fcompb{G}{b}, \parforg{A})$; 
moreover, from Theorem~\ref{thm:unit}, $\mathcal{I}_A$ is normalized. 

From Lemma~\ref{lem:alternative}, $\mathcal{I}_A \dot\cup \mathcal{I}_B = \fcompb{G}{b}$.  
Therefore, from Lemma~\ref{lem:idcomp_norm}, $\mathcal{I}_B$ is a normalized upper ideal of $(\fcompb{G}{b}, \parforg{A})$. 
This completes the proof. 
\end{proof}

The next lemma is the necessity part of Theorem~\ref{thm:veri}: 
\begin{lemma} \label{lem:ideal2veri} 
Let $G$ be a bipartite graph with color classes $A$ and $B$, and 
let $b: V(G)\rightarrow \mathbb{Z}_{\nonneg}$. 
Let $\mathcal{I}_A$ and $\mathcal{I}_B$ be a complementary pair of normalized lower- and upper-ideals 
of the poset $(\fcompb{G}{b}, \parforg{A})$. 
Let $Z_A$ and $Z_B$ be the projective unions of $\mathcal{I}_A$ and $\mathcal{I}_B$ over $A$ and $B$, 
respectively. 
Then, $Z_A\cup Z_B$ is a verifying set of $G$. 
\end{lemma}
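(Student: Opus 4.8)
The plan is to apply Lemma~\ref{lem:slack2opt}: I will fix an arbitrary maximum $b$-matching $M$ of $G$, set $Z := Z_A \cup Z_B$, and verify that the pair $(M, Z)$ satisfies the three conditions \ref{item:slack2opt:inevitable}--\ref{item:slack2opt:loose}; the conclusion that $Z$ is a verifying set then follows at once. Since $\mathcal{I}_A$ and $\mathcal{I}_B$ are complementary, every flexible component lies in exactly one of them, and because $Z_A \subseteq A$ and $Z_B \subseteq B$ we have $Z \cap A = Z_A$ and $Z \cap B = Z_B$. Hence for each $C \in \mathcal{I}_A$ one has $V(C) \cap A \subseteq Z$ and $V(C) \cap B \subseteq \complement{Z}$, while for each $C \in \mathcal{I}_B$ the reverse inclusions hold; this is exactly the dichotomy of Lemma~\ref{lem:alternative}. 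In particular, every edge lying inside a single flexible component joins $Z$ to $\complement{Z}$, so such edges contribute to neither $E[Z]$ nor $E[\complement{Z}]$, and only the inter-component (inevitable or forbidden) edges are relevant to conditions \ref{item:slack2opt:inevitable} and \ref{item:slack2opt:forbidden}.

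For condition \ref{item:slack2opt:inevitable}, I take an edge $uv \in E[Z]$ with $u \in A$, $v \in B$; then $u \in V(C_1) \cap A$ for some $C_1 \in \mathcal{I}_A$ and $v \in V(C_2) \cap B$ for some $C_2 \in \mathcal{I}_B$, forcing $C_1 \neq C_2$, so $uv$ joins distinct components and is inevitable or forbidden. A forbidden such edge would witness $C_2 \paroforg{A} C_1$, hence $C_2 \parforg{A} C_1 \in \mathcal{I}_A$; as $\mathcal{I}_A$ is a lower ideal this would give $C_2 \in \mathcal{I}_A$, contradicting $C_2 \in \mathcal{I}_B$. Thus $uv$ is inevitable and lies in $M$, proving $E[Z] \subseteq M$. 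For condition \ref{item:slack2opt:forbidden}, a symmetric argument shows that any $uv \in M \cap E[\complement{Z}]$ would be inevitable (forbidden edges lie in no maximum $b$-matching), would witness $C_1 \parforg{A} C_2$ with $C_2 \in \mathcal{I}_A$ and $C_1 \in \mathcal{I}_B$, and would again contradict the lower-ideal property of $\mathcal{I}_A$; hence $E[\complement{Z}] \cap M = \emptyset$. Both verifications use only the ideal structure and the directionality of $\paroforg{A}$, and so hold for the arbitrarily chosen $M$.

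The crux is condition \ref{item:slack2opt:loose}, which I will establish through the stronger, $M$-independent inclusion $\defib{G}{b} \subseteq Z$; since every $M$-loose vertex belongs to $\defib{G}{b}$, this suffices. Given $v \in \defib{G}{b}$, let $C$ be the flexible component containing $v$. Then $C$ is loose, hence inconsistent, and by the disjointness $\inconstbh{G}{b}{A} \cap \inconstbh{G}{b}{B} = \emptyset$ (Lemma~\ref{lem:inconst2veri}\ref{item:inconst2veri:disjoint}) it is hooked up by exactly one color class, with all of its $\defib{G}{b}$-vertices lying on that side. If $C$ is hooked up by $A$, then $v \in A$ and $C \in \inconstbh{G}{b}{A} \subseteq \mathcal{I}_A$ by normalization, so $v \in V(C) \cap A \subseteq Z_A \subseteq Z$; the case of $B$ is symmetric via $\inconstbh{G}{b}{B} \subseteq \mathcal{I}_B$. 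This yields $\defib{G}{b} \subseteq Z$ and hence condition \ref{item:slack2opt:loose}, completing the application of Lemma~\ref{lem:slack2opt}. The main obstacle is exactly pinning down that a loose component's $\defib{G}{b}$-vertices lie entirely on its hooking side, which rests on the disjointness of the two inconsistent units; once that is in hand, normalization of the ideals delivers the inclusion directly.
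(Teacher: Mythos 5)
Your proof is correct and takes essentially the same route as the paper's: fix an arbitrary maximum $b$-matching, verify the three conditions of Lemma~\ref{lem:slack2opt}, deriving the loose-vertex condition from normalization and the two edge conditions from the lower-ideal property of $\mathcal{I}_A$ together with the orientation of $\paroforg{A}$. You merely spell out details (why an edge of $E[Z]$ or $E[\complement{Z}]$ joins distinct components and must be inevitable or forbidden, and why loose vertices land in $Z$) that the paper leaves implicit.
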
 

\begin{proof} 
Let $Z := Z_A \cup Z_B$. 
Let $M$ be an arbitrary maximum $b$-matching of $G$. 

First, note that all $M$-loose vertices are contained in $Z$, 
because $\mathcal{I}_A$ and $\mathcal{I}_B$ are normalized. 
Second, we prove $E[Z]\subseteq M$. 
Suppose there is an edge $uv\in E[Z]\setminus M$, 
with $u\in Z_A$ and $v\in Z_B$. 
Then, there exist $C\in \mathcal{I}_A$ and $D\in \mathcal{I}_B$  
such that $u\in V(C)$ and $v\in V(D)$. 
This implies $D \parforg{A} C$, which contradicts $\mathcal{I}_A$ being a lower ideal.  
 Hence, we obtain $E[Z]\subseteq M$.  

Thirdly, we prove $E[\complement{Z}]\cap M = \emptyset$. 
Suppose there is an edge $uv\in E[\complement{Z}]\cap M$. 
This case can be also proved in the same way as the above. 
Finally, from Lemma~\ref{lem:slack2opt}, we obtain that $Z$ is a verifying set. 
\end{proof}

Combining Lemmas~\ref{lem:veri2ideal} and \ref{lem:ideal2veri}, we now obtain the characterization of the verifying sets as follows: 
\begin{theorem} \label{thm:veri} 
Let $G$ be a bipartite graph with color classes $A$ and $B$, and 
let $b: V(G)\rightarrow \mathbb{Z}_{\nonneg}$. 
A set of vertices $Z\subseteq V(G)$ is a verifying set if and only if 
there is a complementary pair of normalized lower ideal $\mathcal{I}_A$ and upper ideal $\mathcal{I}_B$ 
of the poset $(\fcompb{G}{b}, \parforg{A})$ such that  $Z = Z_A \dot\cup Z_B$, 
where $Z_A$ and $Z_B$ are the projective unions of $\mathcal{I}_A$ and $\mathcal{I}_B$ 
over $A$ and $B$, respectively. 
\end{theorem}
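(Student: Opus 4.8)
The plan is to derive Theorem~\ref{thm:veri} by combining Lemmas~\ref{lem:veri2ideal} and \ref{lem:ideal2veri}, which between them establish the two implications of the biconditional. Since these two lemmas already carry the entire structural content, the theorem is essentially their conjunction, and the only task that remains is to check that the two correspondences they provide --- from a verifying set to a complementary pair of normalized ideals, and back --- are mutually consistent with the decomposition $Z = Z_A \dot\cup Z_B$ claimed in the statement.

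For the forward implication, I would assume that $Z$ is a verifying set and invoke Lemma~\ref{lem:veri2ideal} verbatim: it produces a complementary pair of a normalized lower ideal $\mathcal{I}_A$ and a normalized upper ideal $\mathcal{I}_B$ of $(\fcompb{G}{b}, \parforg{A})$ whose projective unions over $A$ and $B$ satisfy $Z = Z_A \dot\cup Z_B$. For the reverse implication, I would start from an arbitrary complementary pair of normalized ideals $\mathcal{I}_A$, $\mathcal{I}_B$, form $Z := Z_A \cup Z_B$ from their projective unions, and apply Lemma~\ref{lem:ideal2veri} to conclude that $Z$ is a verifying set. The union is automatically disjoint: $\mathcal{I}_A$ and $\mathcal{I}_B$ partition $\fcompb{G}{b}$, and $Z_A$, $Z_B$ lie in the disjoint color classes $A$ and $B$, so $Z = Z_A \dot\cup Z_B$ as required.

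At this level I expect no genuine obstacle, since the theorem is a clean corollary of the two lemmas already in hand. If the statement had to be argued from first principles, the main difficulty would reside inside Lemma~\ref{lem:veri2ideal}: one must show that the families of flexible components meeting $Z$ on each color class are closed downward, respectively upward, under $\parforg{A}$ --- propagating membership across inevitable and forbidden edges via Lemmas~\ref{lem:alternative} and \ref{lem:edgetrans} --- and that they are normalized, which is where Theorem~\ref{thm:unit} forces the inconsistent units onto the correct side of each ideal. The reverse lemma's only subtlety is the complementary bookkeeping needed to verify the three hypotheses of Lemma~\ref{lem:slack2opt} for $Z$.
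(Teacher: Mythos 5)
Your proposal matches the paper exactly: the paper derives Theorem~\ref{thm:veri} immediately by combining Lemma~\ref{lem:veri2ideal} (necessity of the ideal decomposition for a verifying set) with Lemma~\ref{lem:ideal2veri} (sufficiency), just as you do, and your remark that the union $Z_A \dot\cup Z_B$ is automatically disjoint because $Z_A\subseteq A$ and $Z_B\subseteq B$ is a correct, if minor, addition. No gaps.
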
 

\section{Algorithm for Computing the $b$-Matching Dulmage-Mendelsohn Decomposition} \label{sec:alg}
\subsection{General Statements} 
In Section~\ref{sec:alg}, we provide an algorithm to compute the $b$-matchings Dulmage-Mendelsohn decomposition. That is to say, we show that, given a bipartite graph, the set of flexible components and the poset can be computed in  strongly polynomial time. 
This algorithm first 
obtains an arbitrary maximum $b$-matching $M$ 
and then construct a certain kind of auxiliary digraphs using $M$. 
In the remainder of Section~\ref{sec:alg}, 
let $G$ be a bipartite graph with color classes $A$ and $B$, and let $b: V(G)\rightarrow \mathbb{Z}_{\nonneg}$. 
\begin{definition} 
Given 
a set of edges $M\subseteq E(G)$, 
the digraph $\aux{G}{A}{B}{M}$ is defined as follows: 
\begin{rmenum} 
\item  $V(\aux{G}{A}{B}{M}) := V(G)$; 
\item  $vu$ is an arc of $\aux{G}{A}{B}{M}$ if $uv\in M$ holds for $u\in A$ and $v\in B$;
\item $uv$ is an arc of $\aux{G}{A}{B}{M}$ if  $uv\in E(G)\setminus M$ holds for $u\in A$ and $v\in B$. 
\end{rmenum} 
\end{definition} 

We will construct $\aux{G}{A}{B}{M}$ to determine the inconsistent unit $\exth{G}{A}$, and then $\aux{G}{B}{A}{M}$ to compute $\exth{G}{B}$, and the consistent flexible components and the poset.

\subsection{Computing Inconsistent Flexible Components}

The next lemma  immediately follows from Lemma~\ref{lem:path2veri}. 
\begin{lemma} \label{lem:inconstalg}
Let $G$ be a bipartite graph with color classes $A$ and $B$, and 
let $b: V(G)\rightarrow \mathbb{Z}_{\nonneg}$. 
Let $M$ be a maximum $b$-matching of $G$. 
Let $U_A$ be the set of $M$-loose vertices in $A$. 
Then, $\exth{G}{A}$ is equal to the set of vertices that can be reached by directed paths from $U_A$  in $\aux{G}{A}{B}{M}$. 
\end{lemma}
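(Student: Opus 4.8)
The plan is to translate reachability in $\aux{G}{A}{B}{M}$ into the existence of the alternating paths that define $\exth{G}{A}$ in Lemma~\ref{lem:path2veri}, and then invoke that lemma directly. First I would record the orientation convention of the auxiliary digraph: by definition every matching edge $uv\in M$ with $u\in A$ and $v\in B$ is oriented as the arc $v\to u$ (from $B$ to $A$), whereas every non-matching edge is oriented from $A$ to $B$. Consequently, along any directed path the arcs alternate between non-matching arcs (which leave $A$) and matching arcs (which leave $B$), and any directed path issuing from a vertex of $U_A\subseteq A$ must begin with a non-matching arc. Since reachability in a digraph is always witnessed by a simple directed path, I may assume the underlying undirected subgraph of such a path is a genuine simple path.

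The core step is a reversal correspondence. Given a directed path $u_0\to v_1\to u_1\to\cdots$ from $u_0\in U_A$ to a terminal vertex $w$, I would reverse it to obtain an undirected path $R$ and read off its edge types from the orientation rule: the endpoint $u_0$ is incident in $R$ only to a non-matching edge, while every internal vertex is incident to exactly one matching and one non-matching edge of $R$. Hence, if $w\in A$ then the last arc entering $w$ was a matching arc, so $w$ carries a matching edge of $R$ and $R$ is an $M$-wedge path from $w$ to $u_0$; if instead $w\in B$, then $w$ carries a non-matching edge, so $R$ is an $M$-exposed path between $w$ and $u_0$. Conversely, any $M$-wedge path from a vertex $w\in A$ to $u_0\in U_A$, and any $M$-exposed path between $w\in B$ and $u_0\in U_A$, reverses to a valid directed path from $u_0$ to $w$, because the fixed orientation of matching and non-matching edges makes the alternation compatible with the arc directions at every step.

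Combining the two directions, the set of vertices in $A$ reachable from $U_A$ is exactly $S_A$ (with $U_A\subseteq S_A$ realized by trivial paths), and the set of vertices in $B$ reachable from $U_A$ is exactly $T_A$, in the notation of Lemma~\ref{lem:path2veri}. Therefore the reachable set equals $S_A\cup T_A$, which by that lemma is precisely $\exth{G}{A}$, completing the argument. I expect the only delicate point to be the parity and edge-type bookkeeping of the alternation, namely verifying that the terminal color class of a directed path selects the wedge versus exposed case correctly and that \emph{both} directions of the reversal preserve the defining incidence conditions; once the orientation convention is fixed, these checks are routine.
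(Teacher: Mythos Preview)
Your proposal is correct and follows exactly the approach the paper intends: the paper states that Lemma~\ref{lem:inconstalg} ``immediately follows from Lemma~\ref{lem:path2veri},'' and what you have written is precisely the routine unpacking of that immediacy, namely the bijection between directed paths from $U_A$ in $\aux{G}{A}{B}{M}$ and the $M$-wedge/$M$-exposed paths defining $S_A$ and $T_A$.
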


\subsection{Computing Consistent Flexible Components} \label{sec:alg:const}

In Section~\ref{sec:alg:const}, 
we show how to compute the consistent flexible components and the poset, 
by revealing their relationship with the strongly connected components decomposition of the auxiliary digraph. 

\begin{definition} 
Let $M$ be a maximum $b$-matching of $G$. 
A path $P$ with ends $u\in V(G)$ and $v\in V(G)$ is {\em $(M; A, B)$-ascending} from $u$ to $v$ if $P$ satisfies the following: 
\begin{rmenum} 
\item \label{item:path2order:aa}  If $u\in A$ and $v\in A$ hold, 
then $P$ is an $M$-wedge path from $u$ to $v$; 
\item \label{item:path2order:ab} If $u\in A$ and $v \in B$ hold,   
then $P$ is an $M$-saturated path between $u$ and $v$; 
\item \label{item:path2order:ba} If $u\in B$ and $v\in A$  hold, 
then $P$ is an $M$-exposed path between $u$ and $v$; 
\item \label{item:path2order:bb} If $u\in B$ and $v\in B$ hold, 
then $P$ is an $M$-wedge path from $v$ to $u$. 
\end{rmenum} 

\end{definition} 

The next lemma states the converse of Lemma~\ref{lem:order2path}. 
\begin{lemma} \label{lem:path2order} 
Let $G$ be a bipartite graph with color classes $A$ and $B$, and 
let $b: V(G)\rightarrow \mathbb{Z}_{\nonneg}$. 
Let $M$ be a maximum $b$-matching of $G$. 
Let $C, D\in \constb{G}{b}$, and let $u\in V(C)$ and $v\in V(D)$. 
If there is an $(M; A, B)$-ascending path from $u$ to $v$, then $C\parforg{A} D$ holds. 
\end{lemma}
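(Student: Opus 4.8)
The plan is to prove the contrapositive-free statement directly, by analyzing an arbitrary $(M; A, B)$-ascending path $P$ from $u$ to $v$ and decomposing it according to how it crosses between distinct flexible components. The central idea is that an ascending path, restricted to a single flexible component, stays entirely within that component by Fact~\ref{fact:allflex} and Lemma~\ref{lem:path2allowed}: since every edge of a flexible component is flexible, an $M$-alternating subpath lying in one component cannot by itself force membership in $\mathcal{I}_A$-type ideals. The real content is carried by the edges of $P$ that join \emph{distinct} flexible components; these are precisely the inevitable and forbidden edges, and each such edge witnesses one step of the relation $\paroforg{A}$.

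First I would set up notation: let $P = u = w_0, w_1, \ldots, w_t = v$, and let $F_0, F_1, \ldots$ denote the maximal subpaths of $P$ each contained in a single flexible component, so that $P$ factors as an alternating concatenation of these intra-component segments joined by edges crossing between components. By the definition of flexible components, every crossing edge $w_iw_{i+1}$ (with $w_i, w_{i+1}$ in distinct components $C'$ and $C''$) is either inevitable or forbidden; since $P$ is $M$-alternating in the appropriate sense (wedge, saturated, or exposed), these crossing edges alternate in a controlled way with respect to membership in $M$. The key step is to check, at each crossing edge, that the local alternation pattern forces exactly the orientation needed: if the crossing edge $xy$ with $x \in A$, $y \in B$ lies in $M$, then $x$'s component is $\paroforg{A}$-related \emph{below} $y$'s component via the inevitable-edge clause; if $xy \notin M$, then $y$'s component is $\paroforg{A}$-related below $x$'s component via the forbidden-edge clause. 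Chaining these single steps, together with reflexivity within each segment, yields $C \parforg{A} D$ by transitivity.

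The main obstacle I anticipate is bookkeeping the four cases of the ascending-path definition (\ref{item:path2order:aa}--\ref{item:path2order:bb}) simultaneously, and in particular verifying that the \emph{direction} of the relation comes out consistently as $C \parforg{A} D$ rather than $D \parforg{A} C$. One must confirm that the endpoints' color classes ($u, v \in A$ vs.\ $B$) and the alternation convention (wedge versus saturated versus exposed) are exactly matched so that reading $P$ from $u$ to $v$ produces the chain $D_1 \paroforg{A} \cdots \paroforg{A} D_k$ with $D_1 = C$ and $D_k = D$, not its reverse. I would handle this by carefully tracking, for each crossing edge, which endpoint lies in $A$ and whether the edge is in $M$, and then invoking the definition of $\paroforg{A}$ verbatim; the symmetry remark that $C_1 \parforg{A} C_2 \iff C_2 \parforg{B} C_1$ can be used as a consistency check. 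A subtlety worth isolating is the first and last segments: if $u$ or $v$ lies in the ``wrong'' color class for its segment's first crossing, one uses the intra-component path guaranteed by Lemma~\ref{lem:elempath} (applicable via Lemma~\ref{lem:restrict}) to route from $u$ to the crossing vertex inside $C$, ensuring the chain genuinely starts at $C$ and ends at $D$.
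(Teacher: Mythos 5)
Your proposal is correct and is essentially the paper's argument: the paper formalizes the same chaining as an induction on $|E(P)|$, peeling off the last edge $wv$, observing that the colors of $w$ and $v$ together with membership of $wv$ in $M$ force $D'\parforg{A} D$ for the component $D'$ of $w$, and concluding by transitivity. Your segment-by-segment decomposition with crossing edges classified as inevitable (in $M$) or forbidden (not in $M$) is just the unrolled form of that induction, and your direction bookkeeping matches the definition of $\paroforg{A}$.
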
 

\begin{proof} 
We proceed by induction on $|E(P)|$. 
If $|E(P)| = 0$, then the statement trivially holds. 
Next assume $|E(P)| > 1$, and the lemma holds for any case where $|E(P)|$ is less. 
Let $w\in V(P)$ be such that $wv \in E(P)$. 
Let $D'$ be the flexible component with $w\in V(D')$. 
In the cases  \ref{item:path2order:aa} and \ref{item:path2order:ba}, 
$v\in A\cap V(D)$ and $w\in B\cap V(D')$ hold, 
whereas in the cases \ref{item:path2order:ab} and \ref{item:path2order:bb}, 
$v\in B\cap V(D)$ and $w\in A \cap V(D')$ hold. 
Therefore, in every case, $D'\parforg{A} D$ holds. 
On the other hand, $P - vw$ is a path shorter than $P$ that is $M$-saturated between $u$ and $w$, 
or $M$-wedge from $u$ to $w$, $M$-saturated between $u$ and $w$, 
or $M$-exposed between $u$ and $w$, according to the cases \ref{item:path2order:aa}, 
\ref{item:path2order:ab}, \ref{item:path2order:ba}, \ref{item:path2order:bb}, 
respectively. 
Therefore, by the induction hypothesis, $C\parforg{A} D'$ holds. 
Thus, we have $C\parforg{A} D$. 
\end{proof}

Combining Lemmas~\ref{lem:order2path} and \ref{lem:path2order}, the next lemma follows: 

\begin{lemma} \label{lem:order2path2path} 
Let $G$ be a bipartite graph with color classes $A$ and $B$, and 
let $b: V(G)\rightarrow \mathbb{Z}_{\nonneg}$. 
Let $M$ be a maximum $b$-matching of $G$. 
Let $C, D\in \constb{G}{b}$. 
Then, the following three properties are equivalent: 
\begin{rmenum} 
\item \label{item:order2path2path:order} $C\parforg{A} D$ holds; 
\item \label{item:order2path2path:all} for any $u\in V(C)$ and any $v\in V(D)$, 
there is an $(M; A, B)$-ascending path from $u$ to $v$; 
\item \label{item:order2path2path:exist} there exist  $u\in V(C)$ and  $v\in V(D)$ such that  
there is an $(M; A, B)$-ascending path from $u$ to $v$. 
\end{rmenum} 
\end{lemma}

Given a digraph $D$, denote $u\rightarrow v$ for $u,v \in V(D)$ if there is a directed path from $u$ to $v$. 
Then, $\rightarrow$ is a pseudo order over $V(D)$ 
and is naturally reduced to a partial order over the set of strongly connected components of $D$. 
We also denote this reduced partial order by $\rightarrow$.

Under Lemma~\ref{lem:order2path2path}, 
the  consistent $b$-flexible components of $G$
and the strongly connected components of $\aux{G}{B}{A}{M}$ are  associated as follows:  
\begin{lemma} \label{lem:constalg}
Let $G$ be a bipartite graph with color classes $A$ and $B$, and 
let $b: V(G)\rightarrow \mathbb{Z}_{\nonneg}$. 
Let $M$ be a maximum $b$-matching of $G$. 
Let $V_0 := V(G)\setminus  \exth{G}{A} \setminus \exth{G}{B}$, and let $V_0\neq 0$. 
Let $C_1,\ldots, C_k$, where $k \ge 1$, be the strongly connected components of the digraph $\aux{G[V_0]}{B\cap V_0}{A\cap V_0}{M\cap E[V_0]}$. 
Then, the family $\{ V(C_i): i = 1, \cdots, k \}$ is equal 
to the family $\{ V(H) : H\in \constb{G}{b} \}$. 
Additionally, 
$C_i \rightarrow C_j$ holds for $i,j\in\{1,\ldots, k\}$  if and only if 
$H \parforg{A} I$ holds for $H, I\in \constb{G}{b}$, 
where $V(C_i) = V(H)$ and $V(C_j) = V(I)$. 
\end{lemma}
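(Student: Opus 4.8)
The lemma asserts two things about the auxiliary digraph restricted to $V_0$ (the vertices lying in no inconsistent unit, hence belonging to consistent flexible components). First, the strongly connected components of this digraph coincide, as vertex sets, with the consistent flexible components. Second, the reachability order $\rightarrow$ on these strongly connected components matches the order $\parforg{A}$ on the consistent flexible components. The key tool is Lemma~\ref{lem:order2path2path}, which gives an exact correspondence between $C \parforg{A} D$ and the existence of an $(M;A,B)$-ascending path; so the plan is to translate ``$(M;A,B)$-ascending path'' into ``directed path in the auxiliary digraph'' and vice versa.

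**The plan for the order part.** I would first establish the dictionary between $(M;A,B)$-ascending paths and directed paths. Examine the definition of $\aux{G}{B}{A}{M}$: with roles of $A$ and $B$ swapped, $vu$ is an arc when $uv \in M$ (for $u \in B$, $v \in A$), and $uv$ is an arc when $uv \in E(G)\setminus M$ (for $u \in B$, $v \in A$). Tracing the four cases in the definition of $(M;A,B)$-ascending paths, an ascending path from $u$ to $v$ alternates matched and unmatched edges in exactly the pattern that, under this arc-orientation, becomes a consistently directed path from $u$ to $v$; conversely every directed path in $\aux{G[V_0]}{B\cap V_0}{A\cap V_0}{M\cap E[V_0]}$ unwinds to an $(M;A,B)$-ascending path on $V_0$. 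I would verify this correspondence edge-by-edge by checking that a matched edge incident to an $A$-vertex and an unmatched edge incident to a $B$-vertex each orient in the direction demanded by the wedge/saturated/exposed conditions. Once the dictionary holds, the equivalence $C \rightarrow D \iff C \parforg{A} D$ follows by combining it with Lemma~\ref{lem:order2path2path}: $C \parforg{A} D$ iff there is an ascending path between representatives (item~\ref{item:order2path2path:exist}) iff there is a directed path, iff $C \rightarrow D$.

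**The plan for the vertex-set part.** For the partition claim, I would argue that two vertices of $V_0$ lie in the same consistent flexible component if and only if they lie in the same strongly connected component. Using the dictionary above together with Lemma~\ref{lem:order2path2path}, $u,v$ in the same flexible component $C$ means $C \parforg{A} C$ in both directions trivially, and item~\ref{item:order2path2path:all} supplies an ascending path in each direction between any $u,v \in V(C)$, hence mutual reachability, putting them in one strongly connected component. Conversely, mutual reachability gives ascending paths both ways, so by Lemma~\ref{lem:path2order} the components $H, I$ containing $u,v$ satisfy $H \parforg{A} I$ and $I \parforg{A} H$; antisymmetry (Theorem~\ref{thm:order}) forces $H = I$. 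One must also confirm that the restricted digraph has vertex set exactly $V_0 = \bigcup_{H \in \constb{G}{b}} V(H)$, which follows because $\exth{G}{A}$ and $\exth{G}{B}$ are precisely the unions of the inconsistent units (Theorem~\ref{thm:unit}), so their complement is the union of consistent flexible components.

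**The main obstacle.** The delicate point is verifying that the path-to-arc dictionary respects the restriction to $G[V_0]$: I must ensure that an ascending path between two consistent-component vertices can be chosen to stay inside $V_0$, so that it survives as a directed path in the \emph{induced} subdigraph, and conversely that directed paths in the induced subdigraph translate back faithfully. Lemma~\ref{lem:order2path} already guarantees the ascending paths can be taken with vertices inside $V(D_1) \dot\cup \cdots \dot\cup V(D_k)$, all of which are consistent components and thus contained in $V_0$; this containment is exactly what licenses passing to the induced subdigraph without losing any order relation. I expect checking this confinement, rather than the arc-orientation bookkeeping, to require the most care.
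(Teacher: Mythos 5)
Your proposal is correct and follows the same route the paper intends: the paper states the lemma without an explicit proof, deriving it directly from Lemma~\ref{lem:order2path2path} via the correspondence between $(M;A,B)$-ascending paths and directed paths in the auxiliary digraph. Your additional care about confining the paths to $V_0$ (via the last statement of Lemma~\ref{lem:order2path}) and invoking antisymmetry from Theorem~\ref{thm:order} for the vertex-set partition fills in exactly the details the paper leaves implicit.
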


\subsection{Concluding Algorithms} 

Combining results in preceding sections, 
we now obtain the following: 

\begin{algorithm} 
\caption{The  $b$-Matching Dulmage-Mendelsohn Decomposition} 
\label{alg:dmb} 
\begin{algorithmic}[1]
\REQUIRE a bipartite graph $G$ with color classes $A$ and $B$, a mapping $b: V(G)\rightarrow \mathbb{Z}_{\nonneg}$, 
a maximum $b$-matching $M$ 
\ENSURE the $b$-matching Dulmage-Mendelsohn decomposition of $G$
\STATE \label{line:auxA} compute $\aux{G}{A}{B}{M}$; compute $\exth{G}{A}$; 
\STATE compute $\aux{G}{B}{A}{M}$; compute $\exth{G}{B}$; 
\STATE let $I_A$ and $I_B$ be the sets of inactive vertices in $\exth{G}{A}$ and $\exth{G}{B}$, respectively; 
\STATE recognize $G[v]$ as an inactive flexible component in $\inconstbh{G}{b}{A}$ or $\inconstbh{G}{b}{B}$, for each $v\in I_A\cup I_B$; recognize $G[V(C)]$ as a member of $\inconstbh{G}{b}{A}$ or $\inconstbh{G}{b}{B}$ for each connected component $C$ of $G[\exth{G}{A}\setminus I_A]$ and $G[\exth{G}{B}\setminus I_B]$, respectively; 

\STATE \label{line:D} compute $D:= \aux{G[V_0]}{B\cap V_0}{A\cap V_0}{M\cap E[V_0]}$, where $V_0 = V(G)\setminus\exth{G}{A}\setminus \exth{G}{B}$; 
\STATE \label{line:strongcomp} compute the strongly connected component decomposition of $D$; 
recognize $G[V(C)]$ as a member of $\constb{G}{b}$ for each strongly connected component $C$; let $C_1 \parforg{A} C_2$ for each pair of strongly connected components $C_1$ and $C_2$ with $C_1\rightarrow C_2$; 

\FORALL{$C\in\inconstbh{G}{b}{A}$} \label{line:for:inconsta} 
\FORALL{$D\in \constb{G}{b}$ with $\parNei{G}{C}\cap V(D)\neq \emptyset$} 
\STATE let $C\parforg{A} D$; 
\ENDFOR 
\ENDFOR 

\FORALL{$C\in\inconstbh{G}{b}{B}$} 
\FORALL{$D\in \fcompb{G}{b}\setminus \inconstbh{G}{b}{B}$ with $\parNei{G}{C}\cap V(D)\neq \emptyset$} 
\STATE let $D\parforg{A} C$; 
\ENDFOR 
\ENDFOR

\FORALL{inactive flexible component  $C$ hooked up by $A$} 
\FORALL{$D\in \inconstbh{G}{b}{A}$ with $\parNei{G}{C}\cap V(D) \neq \emptyset$} 
\STATE let $C\parforg{A} D$; 
\ENDFOR 
\ENDFOR  

\FORALL{inactive flexible component  $C$ hooked up by $B$} 
\FORALL{$D\in \inconstbh{G}{b}{B}$ with $\parNei{G}{C}\cap V(D) \neq \emptyset$} 
\STATE let $D\parforg{A} C$; 
\ENDFOR 
\ENDFOR \label{line:end}   
\end{algorithmic} 
\end{algorithm}

\begin{theorem}\label{thm:alg} 
Let $G$ be a bipartite graph, and let $b: V(G)\rightarrow \mathbb{Z}_{\nonneg}$. 
Given a maximum $b$-matching of $G$, 
the $b$-matching Dulmage-Mendelsohn decomposition can be computed in $O(n+m)$ time, where $n = |V(G)|$ 
and $m = |E(G)|$. 
\end{theorem}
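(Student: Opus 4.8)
The plan is to establish correctness and the running time separately, following the structure of Algorithm~\ref{alg:dmb}. For correctness, I would verify that each block of the algorithm computes what the preceding lemmas and theorems guarantee. Lines~\ref{line:auxA}--2 compute $\exth{G}{A}$ and $\exth{G}{B}$ by reachability in the auxiliary digraphs, which is exactly Lemma~\ref{lem:inconstalg}. The recognition of inconsistent flexible components in lines~3--4 is justified by Lemma~\ref{lem:inconst2conn}, which identifies loose flexible components hooked up by a color class with the connected components of the induced subgraph on the inactive vertices of the inconsistent unit; the inactive vertices themselves become trivial inconsistent flexible components. The core correctness step is line~\ref{line:strongcomp}: by Lemma~\ref{lem:constalg}, the strongly connected components of $D$ (computed in line~\ref{line:D}) correspond exactly to the consistent flexible components, and the reduced reachability order $\rightarrow$ on these components coincides with $\parforg{A}$. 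The remaining \texttt{FORALL} loops (lines~\ref{line:for:inconsta}--\ref{line:end}) add the order relations incident to inconsistent components, which are justified by Lemma~\ref{lem:inconstorder}: inconsistent components are minimal or maximal, and their covering relations are precisely the neighbor relations with inactive components or with consistent components being inserted.

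For the time bound, I would argue that each line runs in $O(n+m)$ time. Constructing $\aux{G}{A}{B}{M}$ is a linear scan over $V(G)$ and $E(G)$, orienting each edge according to membership in $M$; reachability from the $M$-loose vertices $U_A$ is a single graph search, so lines~\ref{line:auxA}--2 are $O(n+m)$. Identifying inactive vertices and their induced connected components (lines~3--4) is again linear via a traversal of the relevant induced subgraphs. Line~\ref{line:D} builds another auxiliary digraph on $V_0$ in linear time, and line~\ref{line:strongcomp} invokes a standard linear-time strongly connected components decomposition (e.g.\ Tarjan's algorithm), whose condensation yields the partial order $\rightarrow$ in $O(n+m)$.

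The step I expect to require the most care is bounding the total work of the four \texttt{FORALL} loops over inconsistent components and inactive components. Naively, iterating over all pairs $(C, D)$ with $\parNei{G}{C}\cap V(D)\neq\emptyset$ could suggest quadratic behavior, so the argument must show that these neighbor-checks can be realized by scanning each edge of $G$ a constant number of times rather than by examining component pairs explicitly. The key observation is that the relation $\parNei{G}{C}\cap V(D)\neq\emptyset$ is witnessed by an edge between $V(C)$ and $V(D)$; by precomputing, for each vertex, the flexible component containing it, one can process every edge once and register the induced component-pair relation, so the aggregate cost over all loops is $O(m)$ rather than the product of the numbers of components.

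Summing these linear contributions gives the claimed $O(n+m)$ bound, which completes the proof. Combined with the fact that a maximum $b$-matching can itself be computed in strongly polynomial time, this also establishes that the entire $b$-matching Dulmage-Mendelsohn decomposition is obtainable in strongly polynomial time, as promised in the overview.
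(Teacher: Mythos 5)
Your proposal is correct and follows essentially the same route as the paper: correctness is read off from Lemmas~\ref{lem:inconstalg}, \ref{lem:inconst2conn}, \ref{lem:constalg}, and \ref{lem:inconstorder} applied block-by-block to Algorithm~\ref{alg:dmb}, and the time bound comes from linear-time digraph construction, graph search, and strongly connected component decomposition. Your added remark on implementing the \texttt{FORALL} loops by a single edge scan (after labelling each vertex with its component) makes explicit a point the paper dismisses as obvious, but it is a refinement of, not a departure from, the paper's argument.
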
 

\begin{proof} 
See Algorithm~\ref{alg:dmb} in the table. 
The correctness follows from Lemmas~\ref{lem:inconstalg}, \ref{lem:inconstorder}, and \ref{lem:constalg}. 
Each of Lines~\ref{line:auxA} to \ref{line:D} and Lines~\ref{line:for:inconsta}  to \ref{line:end} in total  can obviously done in $O(n+m)$ time; 
as the strongly connected component decomposition of a digraph can be computed in linear time (see, e.g., Cormen et al.~\cite{thomas2001introduction}), Line~\ref{line:strongcomp} can be also computed in $O(n+m)$ time. 
\end{proof} 

As a maximum $b$-matching of a graph can be computed in strongly polynomial time 
(see Schrijver~\cite{schrijver2003}, which lists various kinds of such algorithms),  
Theorem~\ref{thm:alg} implies the following: 
\begin{theorem} 
Give a bipartite graph and a mapping $b: V(G)\rightarrow \mathbb{Z}_{\nonneg}$, 
the $b$-matching Dulmage-Mendelsohn decomposition can be computed in strongly polynomial time. 
\end{theorem}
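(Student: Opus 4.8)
The plan is to obtain the decomposition by a two-phase computation and to argue that each phase runs in strongly polynomial time, so that their sequential composition does as well. First I would invoke the known fact that a maximum $b$-matching of a bipartite graph can be computed in strongly polynomial time; this is standard, and I would cite Schrijver~\cite{schrijver2003}, which lists several such algorithms. Let $M$ denote the maximum $b$-matching so obtained. Note that $G$, $b$, and $M$ together are exactly the input required by Theorem~\ref{thm:alg}.

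Second, I would apply Theorem~\ref{thm:alg} to $G$, $b$, and $M$. That theorem guarantees that, given a maximum $b$-matching, Algorithm~\ref{alg:dmb} produces the $b$-matching Dulmage-Mendelsohn decomposition of $G$ in $O(n+m)$ time, where $n=|V(G)|$ and $m=|E(G)|$. Since an $O(n+m)$ bound is trivially strongly polynomial, the whole procedure---first computing $M$, then running Algorithm~\ref{alg:dmb}---is the composition of two strongly polynomial steps, and the total running time is bounded by the sum of their running times. Hence the overall computation is strongly polynomial, which is the assertion of the theorem.

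The one point I would treat with care, and the only place where anything could go wrong, is the precise sense of \emph{strongly} polynomial in the first phase. The term requires a running time bounded by a polynomial in $n$ and $m$ alone, with no dependence on the numerical magnitudes of the values $b(v)$. I would therefore make sure to invoke an algorithm whose complexity bound does not conceal a factor such as $b(V(G))$ or $\log b_{\max}$, which would yield only a weakly polynomial guarantee; the references collected in Schrijver~\cite{schrijver2003} supply genuinely strongly polynomial algorithms for maximum $b$-matching, so this requirement is met. Given that, the theorem follows immediately by the additivity of polynomial running times under sequential composition, and no further combinatorial argument is needed beyond what Theorem~\ref{thm:alg} already establishes.
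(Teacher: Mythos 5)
Your proposal is correct and follows exactly the paper's argument: the paper also obtains this theorem by first computing a maximum $b$-matching in strongly polynomial time (citing Schrijver~\cite{schrijver2003}) and then applying Theorem~\ref{thm:alg}. Your additional remark about verifying that the matching algorithm's bound does not depend on the magnitudes of $b$ is a sensible precaution but introduces nothing beyond what the paper implicitly assumes.
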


\begin{ac}
The author thanks Professor Kazuhisa Makino for suggesting this topic. 
\end{ac}

\bibliographystyle{amsplain.bst}
\bibliography{dmb.bib}

\end{document}